\documentclass[a4paper,11pt,reqno]{amsart}

\usepackage[latin1]{inputenc}

\usepackage{
amssymb,
tikz,
xcolor
}

\usepackage[hidelinks]{hyperref}

\usepackage{geometry}
\geometry{hmargin={2.3cm,2.3cm}}
\geometry{vmargin={5cm,3cm}}


\newtheorem{thm}{Theorem}[section]
\newtheorem{lemma}[thm]{Lemma}
\newtheorem{prop}[thm]{Proposition}
\newtheorem{cor}[thm]{Corollary}

\theoremstyle{remark}
\newtheorem{remark}[thm]{Remark}

\theoremstyle{definition}
\newtheorem{defi}[thm]{Definition}


\newcommand{\R}{\mathbb{R}}
\newcommand{\C}{\mathbb{C}}

\newcommand{\mE}{\mathrm{E}}
\newcommand{\mV}{\mathrm{V}}
\newcommand{\cM}{\mathcal{M}}
\newcommand{\cK}{\mathcal{K}}
\newcommand{\cF}{\mathcal{F}}
\newcommand{\cD}{\mathcal{D}}
\newcommand{\cL}{\mathcal{L}}
\newcommand{\cG}{\mathcal{G}}
\newcommand{\cO}{\mathcal{O}}
\renewcommand\phi{\varphi}
\newcommand{\eps}{\varepsilon}
\newcommand{\vv}{\mathrm{v}}
\newcommand{\dx}{\,dx}
\newcommand{\dxe}{\,dx_e}
\newcommand{\dom}{\mathrm{dom}}
\renewcommand{\geq}{\geqslant}
\renewcommand{\leq}{\leqslant}
\newcommand{\Sum}{\displaystyle \sum}
\newcommand{\f}[2]{\frac{#1}{#2}}

\renewcommand{\geq}{\geqslant}
\renewcommand{\leq}{\leqslant}

\newcommand\gH{{\mathfrak{H}}}
\newcommand{\gG}{{\Gamma}}

\newcommand\cH{{\mathcal{H}}}

\newcommand\cN{{\mathfrak{N}}}

\newcommand\I{{\rm{i}}}

\newcommand{\be}{\begin{equation}}
\newcommand{\ee}{\end{equation}}

\tikzstyle{nodino}=[circle,draw,fill,inner sep=0pt,minimum size=0.5mm]
\tikzstyle{infinito}=[circle,inner sep=0pt,minimum size=0mm]
\tikzstyle{nodo}=[circle,draw,fill,inner sep=0pt, minimum size=0.5*width("k")]
\tikzstyle{nodo_vuoto}=[circle,draw,inner sep=0pt, minimum size=0.5*width("k")]
\usetikzlibrary{graphs}
\tikzset{every loop/.style={min distance=10mm,in=300,out=240,looseness=10}}
\tikzset{place/.style={circle,thick,draw=blue!75,fill=blue!20,minimum
size=6mm}}
\tikzset{place2/.style={circle,thick,draw=red!75,fill=red!20,minimum
size=6mm}}


\title[NLDE on graphs with localized nonlinearities]{Nonlinear Dirac equation on graphs with localized nonlinearities: bound states and nonrelativistic limit}

\author[W. Borrelli]{William Borrelli}
\address{Universit\'e Paris-Dauphine, PSL Research University, CNRS, UMR 7534, CEREMADE, F-75016 Paris, France.} 
\email{borrelli@ceremade.dauphine.fr}

\author[R. Carlone]{Raffaele Carlone}
\address{Universit\`{a} ``Federico II'' di Napoli, Dipartimento di Matematica e Applicazioni ``R. Caccioppoli'', MSA, via Cinthia, I-80126, Napoli, Italy.} 
\email{raffaele.carlone@unina.it}

\author[L. Tentarelli]{Lorenzo Tentarelli}
\address{Sapienza Universit\`a di Roma, Dipartimento di Matematica, P.le Aldo Moro, 5 , 00185, Roma, Italy.} 
\email{tentarelli@mat.uniroma1.it}


\begin{document}

\begin{abstract}
 In this paper we study the nonlinear Dirac (NLD) equation on noncompact metric graphs with localized Kerr nonlinearities, in the case of Kirchhoff-type conditions at the vertices. Precisely, we discuss existence and multiplicity of the bound states (arising as critical points of the NLD action functional) and we prove that, in the $L^2$-subcritical case, they converge to the bound states of the NLS equation in the nonrelativistic limit.
\end{abstract}

\maketitle


\section{Introduction}
\label{sec-intro}

The investigation of evolution equations on metric graphs (see Section \ref{sec-setting} below for details) has become very popular nowadays as they are assumed to represent effective models for the study of the dynamics of physical systems confined in branched spatial domains. A specific attention has been addressed to the \emph{focusing} nonlinear Schr\"odinger (NLS) equation, i.e.
\begin{equation}
 \label{eq-NLStime}
 \imath\dot{v}=-v''-|v|^{p-2}\,v,\qquad p\geq2,
\end{equation}
with suitable vertex conditions, as it is supposed to well approximate (for $p=4$) the behavior of Bose-Einstein condensates in ramified traps (see, e.g., \cite{GW-PRE} and the references therein).

From the mathematical point of view, the discussion has been mainly focused on the study of the stationary solutions of \eqref{eq-NLStime}, namely functions of the form $v(t,x)=e^{-i\lambda t}\,u(x)$, with $\lambda\in\R$, that solve the stationary version of \eqref{eq-NLStime}, i.e.
\[
 -u''-|u|^{p-2}\,u=\lambda u\,,
\]
with vertex conditions of $\delta$-type. In particular, the most investigated subcase has been that of the \emph{Kirchhoff} vertex conditions, which impose at each vertex:
\begin{itemize}
 \item[(i)] continuity of the function (for details see \eqref{eq-kirch1}),\\[-.3cm]
 \item[(ii)] ``balance'' of the derivatives (for details see \eqref{eq-kirch2}).
\end{itemize}
For a short bibliography limited to the case of noncompact metric graphs with a finite number of edges (which is the framework discussed in the paper) we refer the reader to, e.g., \cite{AST-CVPDE,
AST-JFA,
AST-CVPDE2,
AST-CMP,
CFN-PRE,
CFN-Non,
LLS-JMAA,
NPS-Non,
NRS-JDE} and the references therein.

Following \cite{GSD-PRA,N-RSTA}, also a simplified version of this model has recently gained a particular attention: the case of a nonlinearity localized on the \emph{compact core} $\cK$ of the graph (which is the subgraph consisting of all the bounded edges); namely,
\begin{equation}
 \label{eq-NLSconc}
 -u''-\chi_{_\cK}|u|^{p-2}\,u=\lambda u
\end{equation}
with Kirchhoff vertex conditions and $\chi_{_\cK}$ denoting the characteristic function of $\cK$. This problem has been studied in the $L^2$-subcritical case in \cite{ST-JDE,ST-NA,T-JMAA}, while some new results on the $L^2$-critical case have been presented in \cite{DT-OTAA,DT-p} (for a general overview see also \cite{BCT-p}).


\begin{remark}
 We also mention some interesting results on the problem of the bound states on compact graphs. For a purely variational approach we recall, e.g., \cite{D-JDE,CDS-MJM}, whereas for a bifurcation approach we recall, e.g., \cite{MP-AMRX}.
\end{remark}

As a further development, in the last years also the study of the Dirac operator on metric graphs has generated a growing interest (see, e.g., \cite{ALTW-IEOT,
BH-JPA,
BT-JMP,
P}). In particular, \cite{SBMK-JPA} proposed (although in the simplified case of the \emph{infinite 3-star} graph, depicted in Figure \ref{fig-3star}) the study of the \emph{nonlinear} Dirac equation on networks, namely \eqref{eq-NLStime} with the laplacian replaced by the Dirac operator
\begin{equation}
 \label{eq-dirac_formal}
 \cD:=-\imath c\frac{d}{dx}\otimes\sigma_{1}+mc^{2}\otimes\sigma_{3},
\end{equation}
where $m>0$ represents the \emph{mass} of the generic particle of the system and $c>0$ represents the ``\emph{speed of light}''.

\begin{figure}
 \centering
 \begin{tikzpicture}[xscale= 0.4,yscale=0.4]
 \node at (0,0) [nodo] (00) {};
 \node at (6,0) [infinito] (60) {};
 \node at (7,0) [infinito] (70) {};
 \node at (-4,4) [infinito] (-44) {};
 \node at (-5,5) [infinito] (-4545) {};
 \node at (-4,-4) [infinito] (-4-4) {};
 \node at (-5,-5) [infinito] (-45-45) {};

 \draw [-] (00) -- (60);
 \draw [dashed] (70) -- (60);
 \draw [-] (00) -- (-44);
 \draw [dashed] (-44) -- (-4545);
 \draw [-] (00) -- (-4-4);
 \draw [dashed] (-4-4) -- (-45-45);
 \end{tikzpicture}
 \caption{infinite 3-star graph.}
 \label{fig-3star}
\end{figure}
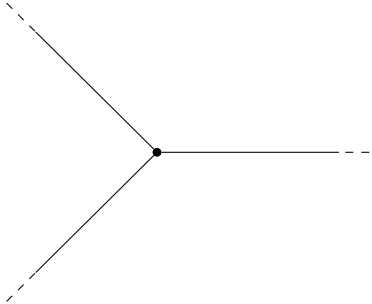

\begin{remark}
In fact, $c$ is the \emph{speed of light} only in truly relativistic models, whereas in the present case it should be rather considered as a phenomenological parameter depending on the model under study. Nevertheless, for the sake of simplicity, we will refer to it as ``relativistic parameter'' or ``speed of light'' throughout the paper.
\end{remark}

 Here $\sigma_1$ and $\sigma_3$ are the so-called \emph{Pauli matrices}, i.e.
\begin{equation}
 \label{eq-pauli}
 \sigma_1:=\begin{pmatrix}
  0 & 1 \\
  1 & 0
 \end{pmatrix}
 \qquad\text{and}\qquad
 \sigma_3:=\begin{pmatrix}
  1 & 0 \\
  0 & -1
 \end{pmatrix}\,,
\end{equation}
and with the wave function $v$ replaced by the spinor $\chi:=(\chi^1,\chi^2)^T$. Precisely, \cite{SBMK-JPA} suggests the study again of the stationary solutions, that is $\chi(t,x)=e^{-i\omega t}\,\psi(x)$, with $\omega\in\R$, that solve
\begin{equation}
 \label{eq-NLD}
 \cD\psi-|\psi|^{p-2}\,\psi=\omega \psi\,.
\end{equation}

The attention recently attracted by the linear and the nonlinear Dirac equations is due to their applications, as effective equations, in many physical models, as in solid state physics and nonlinear optics \cite{HC-PD,HC-NJP}. 

While originally the NLDE appeared as a field equation for relativistic interacting fermions \cite{LKG-PRD}, then it was used in particle physics  to simulate features of quark confinement, acoustic physics, and in the context of Bose-Einstein condensates \cite{HC-NJP}.

Recently, it also appeared that some properties of physical models, as thin carbon structures, are well described using as an effective equation for non-relativistic electronic properties , the Dirac  equation. We mention, thereupon, the seminal papers by Fefferman and Weinstein \cite{FW-JAMS,FW-CMP}, the work of Arbunich and Sparber \cite{AS-JMP} (where a rigorous justification of linear and nonlinear equations in two-dimensional honeycomb structures is given) and the referenced therein. In addition, we recall that the existence of stationary solutions for cubic and Hartree-type Dirac equations for honeycomb structures and graphene samples has been investigated in \cite{B-JMP,B-JDE,B-CVPDE}; whereas, for an overview on global existence results for one dimensional NLDE we refer to \cite{CCNP-SIMA,P-RIMS}.

On the other hand, in the context of metric graphs the interest for the Nonlinear Dirac equation arises in the analysis of effective models of condensed matter physics and field theory (\cite{SBMK-JPA}). Moreover, Dirac solitons in networks may be realized in optics, in atomic physics, etc. (see again \cite{SBMK-JPA} and the references therein).

\medskip
In this paper, we discuss the case of \eqref{eq-NLD} with localized nonlinearity (or, equivalently, the Dirac analogous of \eqref{eq-NLSconc}), namely
\[
 \cD\psi-\chi_{_\cK}|\psi|^{p-2}\,\psi=\omega \psi\,.
\]
The reduction to this simplified model arises as one assumes that the nonlinearity affects only the compact core of the graph. This idea was originally exploited in the case of Schr\"odinger equation in \cite{GSD-PRA} and it represents a preliminary step toward  the investigation of the case with the ``extended'' nonlinearity, i.e. \eqref{eq-NLD}, which will be discussed in a forthcoming paper.

\medskip
It is finally worth stressing that, as for the Schr\"odinger case, the operator $\cD$ needs some suitable vertex conditions, which make the operator self-adjoint. In this paper, we limit ourselves to the discussion of those conditions that converge to the Kirchhoff ones in the nonrelativistic limit, and that we call \emph{Kirchhoff-type}. The reason is that they identify (as well as Kirchhoff for Schr\"odinger) the \emph{free case}; namely, the case in which there are no attractive or repulsive effects at the vertices, which then play the role of mere junctions between the edges.

Roughly speaking these conditions ``split'' the requirements of Kirchhoff conditions: the continuity condition is imposed only on the first component of the spinor, while the second component (in place of the derivative) has to satisfy a ``balance'' condition (see \eqref{eq-kirchtype1}$\&$\eqref{eq-kirchtype2}).

\medskip
The paper is organized as follows:
\begin{itemize}
 \item[(i)] in Section \ref{sec-res} we briefly recall some basics on metric graphs and on the properties of the Dirac operator with Kirchhoff-type vertex conditions, and then we state the main results of the paper (Section \ref{sec-main}):\\[-.4cm]
 \begin{itemize}
 \item[-] existence and multiplicity of the bound states (Theorem \ref{thm-bound});\\[-.4cm]
 \item[-] nonrelativistic limit for the bound states (Theorem \ref{thm-limit});\\[-.3cm]
 \end{itemize}
 \item[(ii)] in Section \ref{sec-bound} we show the proof of Theorem \ref{thm-bound};\\[-.3cm]
 \item[(iii)] in Section \ref{sec-limit} we show the proof of Theorem \ref{thm-limit};\\[-.3cm]
 \item[(iv)] in Appendix \ref{sec-linear} we discuss more in details the properties of the Dirac operator with Kirchhoff-type conditions on metric graphs, while Appendix \ref{sec-formdomain} deals with the definition of the form domain of the Dirac operator.
\end{itemize}

\bigskip
\bigskip
\noindent\textbf{Acknowledgements}

\medskip
\noindent We wish to thank Eric S\'{e}r\'{e} for fruitful discussions.


\section{Setting and main Results}
\label{sec-res}

In this section we aim at presenting the main results of the paper. However, the statements of Theorem \ref{thm-bound} and Theorem \ref{thm-limit} require some basics on metric graphs and on the Dirac operator.


\subsection{Metric graphs and functional setting}
\label{sec-setting}

A complete discussion of the definition and the features of metric graphs can be found in \cite{AST-CVPDE,BK,K-WRM} and the references therein. Here we limit ourselves to recall some basic notions.

Throughout, a \emph{metric graph} $\cG=(\mV,\mE)$ is a connected {\em multigraph} (i.e., multiple edges  and self-loops are allowed) with a finite number of edges and vertices. Each edge is a finite or half-infinite segment of line and the edges are glued together at their endpoints (the vertices of $\cG$) according to the topology of the graph (see Figure \ref{fig-gen}). 

Unbounded edges are identified with (copies of) $\R^+ = [0,+\infty)$ and are called half-lines, while bounded edges are identified with closed and bounded intervals $I_e =[0,\ell_e]$, $\ell_e>0$. Each edge (bounded or unbounded) is endowed with a coordinate $x_e$, chosen in the corresponding interval, which has an arbitrary orientation if the interval is bounded, whereas presents the natural orientation in case of a half-line.

\begin{figure}
\centering
\begin{tikzpicture}[xscale= 0.5,yscale=0.5]
\node at (0,0) [nodo] (1) {};
\node at (2,0) [nodo] (2) {};
\node at (4,1) [nodo] (3) {};
\node at (2,3) [nodo] (4) {};
\node at (1,2) [nodo] (5) {};
\node at (-1,2) [nodo] (6) {};
\node at (-2,0) [nodo] (7) {};
\node at (-1,3) [nodo] (8) {};
\node at (-2,2) [nodo] (9) {};
\node at (-4,0) [nodo] (10) {};
\node at (-2,-2) [nodo] (11) {};
\node at (0,-2) [nodo] (12) {};
\node at (3,-2) [nodo] (13) {};
\node at (-3,1) [nodo] (14) {};
\node at (8,-2) [infinito] (15) {};
\node at (9,-2) [infinito] (15b) {};
\node at (8,3) [infinito] (16) {};
\node at (9,3) [infinito] (16b) {};
\node at (-9,-2) [infinito] (17) {};
\node at (-10,-2) [infinito] (17b) {};
\node at (-9,4) [infinito] (18) {};
\node at (-10,4.5) [infinito] (18b) {};
\draw (4.5,1) circle (0.5cm);
\draw [-] (1) -- (2);
\draw [-] (2) -- (3);
\draw [-] (3) -- (4);
\draw [-] (4) -- (5);
\draw [-] (5) -- (2);
\draw [-] (1) -- (13);
\draw [-] (1) -- (12);
\draw [-] (5) -- (6);
\draw [-] (1) -- (6);
\draw [-] (8) -- (6);
\draw [-] (11) -- (12);
\draw [-] (7) -- (12);
\draw [-] (1) -- (7);
\draw [-] (6) -- (7);
\draw [-] (9) -- (7);
\draw [-] (7) -- (11);
\draw [-] (11) -- (10);
\draw [-] (7) -- (10);
\draw [-] (9) -- (10);
\draw [-] (9) -- (6);
\draw [-] (1) to [out=100,in=190] (5);
\draw [-] (1) to [out=10,in=270] (5);
\draw [-] (13) -- (15);
\draw [dashed] (15) -- (15b);
\draw [-] (4) -- (16);
\draw [dashed] (16) -- (16b);
\draw [-] (14) -- (18);
\draw [dashed] (18) -- (18b);
\draw [-] (11) -- (17);
\draw [dashed] (17) -- (17b);
\end{tikzpicture}
\caption{a general noncompact metric graph.}
\label{fig-gen}
\end{figure}
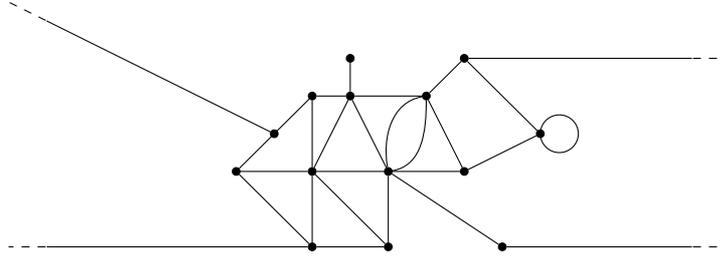

As a consequence, the graph $\cG$ is a locally compact metric space, the metric given by the shortest distance along the edges. Clearly, since we assume a finite number of edges and vertices, $\cG$ is \emph{compact} if and only if it does not contain any half-line. A further important notion, introduced in \cite{AST-JFA,ST-JDE} is the following.

\begin{defi}
If $\cG$ is a metric graph, we define its \emph{compact core} $\cK$ as the metric subgraph of $\cG$ consisting of all its bounded edges (see, e.g., Figure \ref{fig-gencomp}). In addition, we denote by $\ell$ the measure of $\cK$, namely
\[
 \ell=\sum_{e\in\cK}\ell_e.
\]
\end{defi}

\begin{figure}
\centering
\begin{tikzpicture}[xscale= 0.5,yscale=0.5]
\node at (0,0) [nodo] (1) {};
\node at (2,0) [nodo] (2) {};
\node at (4,1) [nodo] (3) {};
\node at (2,3) [nodo] (4) {};
\node at (1,2) [nodo] (5) {};
\node at (-1,2) [nodo] (6) {};
\node at (-2,0) [nodo] (7) {};
\node at (-1,3) [nodo] (8) {};
\node at (-2,2) [nodo] (9) {};
\node at (-4,0) [nodo] (10) {};
\node at (-2,-2) [nodo] (11) {};
\node at (0,-2) [nodo] (12) {};
\node at (3,-2) [nodo] (13) {};
\node at (-3,1) [nodo] (14) {};
\draw (4.5,1) circle (0.5cm);
\draw [-] (1) -- (2);
\draw [-] (2) -- (3);
\draw [-] (3) -- (4);
\draw [-] (4) -- (5);
\draw [-] (5) -- (2);
\draw [-] (1) -- (13);
\draw [-] (1) -- (12);
\draw [-] (5) -- (6);
\draw [-] (1) -- (6);
\draw [-] (8) -- (6);
\draw [-] (11) -- (12);
\draw [-] (7) -- (12);
\draw [-] (1) -- (7);
\draw [-] (6) -- (7);
\draw [-] (9) -- (7);
\draw [-] (7) -- (11);
\draw [-] (11) -- (10);
\draw [-] (7) -- (10);
\draw [-] (9) -- (10);
\draw [-] (9) -- (6);
\draw [-] (1) to [out=100,in=190] (5);
\draw [-] (1) to [out=10,in=270] (5);
\end{tikzpicture}
\caption{the compact core of the graph in Figure \ref{fig-gen}.}
\label{fig-gencomp}
\end{figure}
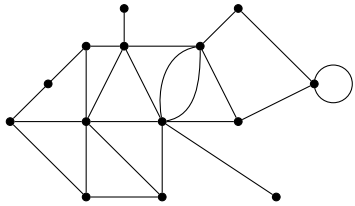

\medskip
A function $u:\cG\to\C$ can be regarded as a family of functions $(u_e)$, where $u_e:I_e\to\C$  is the restriction of $u$ to the edge (represented by) $I_e$. The usual $L^p$ spaces can be defined in the natural way, with norm
\[
 \|u\|_{L^p(\cG)}^p := \sum_{e\in\mE} \| u_e\|_{L^p(I_e)}^p,\quad \text{if }p\in[1,\infty),\qquad\text{and}\qquad\|u\|_{L^\infty(\cG)} := \max_{e\in\mE} \| u_e\|_{L^\infty(I_e)},
\]
while $H^1(\cG)$ is the space of functions $u=(u_e)$ such that $u_e\in  H^1(I_e)$ for every edge $e\in\mE$, with norm
\[
 \|u\|_{H^1(\cG)}^2 = \|u'\|_{L^2(\cG)}^2 + \|u\|_{L^2(\cG)}^2
\]
(and in this way one can also define $H^2(\cG)$, $H^3(\cG)$, etc $\dots$). Consistently, a spinor $\psi=(\psi^1,\psi^2)^T:\cG\to\C^2$ is a family of 1d-spinors
\[
 \psi_e=\begin{pmatrix}\psi_e^1\\[.2cm] \psi_e^2\end{pmatrix}:I_e\longrightarrow\C^{2},\qquad \forall e\in\mE,
\]
and thus
\[
L^{p}(\cG,\C^{2}):= \bigoplus_{e\in\mE} L^{p}(I_e)\otimes\C^{2},
\]
endowed with the norm
\[
 \Vert\psi\Vert_{L^{p}(\cG,\C^{2})}^p:=\Sum_{e\in\mE}\Vert \psi_e\Vert_{L^{p}(I_e)}^p,\quad \text{if }p\in[1,\infty),\qquad\text{and}\qquad \Vert\psi\Vert_{L^{\infty}(\cG,\C^2)}:=\max_{e\in\mE}\Vert\psi_e\Vert_{L^{\infty}(I_e)},
\]
while
\[
H^{1}(\cG,\C^{2}):= \bigoplus_{e\in\mE} H^{1}(I_e)\otimes\C^{2}
\]
endowed with the norm
\[
 \Vert\psi\Vert_{H^{1}(\cG,\C^{2})}^2:=\Sum_{e\in\mE}\Vert \psi_e\Vert_{H^{1}(I_e)}^2
\]
(and so on for $H^{2}(\cG,\C^{2})$, $H^{3}(\cG,\C^{2})$, etc $\dots$). Equivalently, one can say that $L^{p}(\cG,\C^{2})$ is the space of the spinors such that $\psi^1,\psi^2\in L^p(\cG)$, with
\[
\begin{array}{l}
 \displaystyle \Vert\psi\Vert_{L^{p}(\cG,\C^{2})}^p:=\Vert\psi^1\Vert_{L^{p}(\cG)}^p+\Vert\psi^2\Vert_{L^{p}(\cG)}^p,\quad \text{if }p\in[1,\infty),\\[.5cm]
 \displaystyle \Vert\psi\Vert_{L^{\infty}(\cG,\C^2)}:=\max\left\{\Vert\psi^1\Vert_{L^{\infty}(\cG)},\Vert\psi^2\Vert_{L^{\infty}(\cG)}\right\}, 
\end{array}
 \]
 and that $H^{1}(\cG,\C^{2})$ is the space of the spinors such that $\psi^1,\psi^2\in H^1(\cG)$, with
\[
 \Vert\psi\Vert_{H^{1}(\cG,\C^{2})}^2:=\Vert\psi^1\Vert_{H^{1}(\cG)}^2+\Vert\psi^2\Vert_{H^{1}(\cG)}^2.
 \]
 
\begin{remark}
 The usual definition of the space $H^1(\cG)$ consists also of a global continuity requirement, which forces all the components of a function that are incident to a vertex to assume the same value at that vertex. However, for the aims of this paper it is worth keeping this global continuity notion separate and introduce it when it is actually required (see \eqref{eq-kirch1}).
\end{remark}


\subsection{The Dirac operator with Kirchhoff-type conditions}

The expression given by \eqref{eq-dirac_formal} of the Dirac operator on a metric graph is purely formal, since it does not clarify what happens at the vertices of the graph, given that the derivative $\f{d}{dx}$ is well defined just in the interior of the edges.

As well as for the laplacian in the Schr\"odinger case, the way to give a rigorous meaning to \eqref{eq-dirac_formal} is to find suitable self-adjoint realizations of the operator. However, an extensive discussion of all the possible self-adjoint realizations of the Dirac operator on graphs goes beyond the aims of this paper. Throughout, we limit ourselves to the case of the Kirchhoff-type conditions (introduced in \cite{SBMK-JPA}), which represent the free case for the Dirac operator. For more details on self-adjoint extensions of the Dirac operator on metric graphs we refer the reader to \cite{BT-JMP,P}. We also mention \cite{HK-JMAA}, where boundary conditions for 1-D Dirac operators are studied for a model of quantum wires.
 
\begin{defi}
\label{defi-dirac}
Let $\cG$ be a metric graph and let $m,c>0$. We call \emph{Dirac operator} with Kirchhoff-type vertex conditions the operator $\cD:L^2(\cG,\C^2)\to L^2(\cG,\C^2)$ with action
\begin{equation}
\label{eq-actionD}
\cD_{|_{I_e}}\psi=\cD_e\psi_e:=-\imath c\,\sigma_{1}\psi_e'+mc^{2}\,\sigma_{3}\psi_e,\qquad\forall e\in\mE,
\end{equation}
$\sigma_1,\sigma_3$ being the matrices defined in \eqref{eq-pauli}, and domain
\begin{equation}\label{eq-dirac_domain}
 \dom(\cD):=\left\{\psi\in H^1(\cG,\C^2):\psi\text{ satisfies \eqref{eq-kirchtype1} and \eqref{eq-kirchtype2}}\right\},
\end{equation}
where
\begin{gather}
 \label{eq-kirchtype1}
 \psi_e^{1}(\vv)=\psi^{1}_{f}(\vv),\qquad\forall e,f\succ\vv,\qquad\forall \vv\in\cK,\\[.4cm]
 \label{eq-kirchtype2}
 \sum_{e\succ v}\psi^{2}_{e}(\vv)_{\pm}=0,\qquad\forall \vv\in\cK,
\end{gather}
``$e\succ v$'' meaning that the edge $e$ is incident at the vertex $\vv$ and $\psi^{2}_{e}(\vv)_{\pm}$ standing for $\psi^{2}_{e}(0)$ or $-\psi^{2}_{e}(\ell_e)$ according to whether $x_e$ is equal to $0$ or $\ell_e$ at $\mathrm{v}$.
\end{defi}

\begin{remark}
 Note that the operator $\cD$ actually depends of the parameters $m,c$, which represent (as pointed out in Section \ref{sec-intro}) the mass of the generic particle and the speed of light (respectively). For the sake of simplicity we omit this dependence unless it be necessary to avoid misunderstandings.
\end{remark}

The basic properties of the operator \eqref{eq-dirac_formal} with the above conditions are summarized in the following

\begin{prop}
\label{spectrumkirchoff}
The Dirac operator $\cD$ introduced by Definition \ref{defi-dirac} is self-adjoint on $L^{2}(\cG,\C^{2})$. In addition, its spectrum is
\begin{equation}
 \label{eq-sp_D}
 \sigma(\cD)=(-\infty,-mc^2]\cup[mc^2,+\infty).
\end{equation}
\end{prop}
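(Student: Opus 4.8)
The plan is to prove the two assertions separately, the self-adjointness by checking directly that $\cD^{*}=\cD$ (rather than via von Neumann's deficiency-index criterion), and the spectral identity by combining an algebraic lower bound with an explicit singular sequence supported on a half-line.

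\emph{Self-adjointness.} Since $\sigma_{1},\sigma_{3}$ are Hermitian and the mass term $mc^{2}\sigma_{3}$ is a bounded symmetric multiplication operator, for $\psi,\phi\in H^{1}(\cG,\C^{2})$ an integration by parts on each edge produces the boundary form $\langle\cD\psi,\phi\rangle-\langle\psi,\cD\phi\rangle=-\imath c\sum_{\vv}\sum_{e\succ\vv}(\pm)\,\langle\psi_{e}(\vv),\sigma_{1}\phi_{e}(\vv)\rangle_{\C^{2}}$, where the sign $(\pm)$ records whether $x_{e}$ equals $0$ or $\ell_{e}$ at $\vv$ (the terms ``at infinity'' disappear because $H^{1}$-functions vanish at the free ends of the half-lines). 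Using $\sigma_{1}(a,b)^{T}=(b,a)^{T}$, the elementary identity $(\pm)\,\psi_{e}^{2}(\vv)=-\psi_{e}^{2}(\vv)_{\pm}$, and the continuity of the first spinor components, each inner vertex sum rewrites as $-\psi^{1}(\vv)\sum_{e\succ\vv}\overline{\phi_{e}^{2}(\vv)_{\pm}}-\overline{\phi^{1}(\vv)}\sum_{e\succ\vv}\psi_{e}^{2}(\vv)_{\pm}$; hence it vanishes for all $\psi,\phi\in\dom(\cD)$ by \eqref{eq-kirchtype1}--\eqref{eq-kirchtype2}, which shows that $\cD$ is symmetric. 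For the reverse inclusion $\cD^{*}\subseteq\cD$, I would first test the identity $\langle\cD\psi,\phi\rangle=\langle\psi,\eta\rangle$ against $\psi$ supported in the interior of a single edge: this gives $\cD_{e}\phi_{e}=\eta_{e}$ in the distributional sense, hence $\phi_{e}'\in L^{2}(I_{e})$ and $\phi\in H^{1}(\cG,\C^{2})$. Applying then the integration-by-parts formula with this $\phi$ and letting $\psi$ range over $\dom(\cD)$, one obtains $\sum_{e\succ\vv}\overline{\phi_{e}^{2}(\vv)_{\pm}}=0$ (choosing $\psi$ realising arbitrary single-valued data for $\psi^{1}$ and zero data for $\psi^{2}$) and the independence of $\phi_{e}^{1}(\vv)$ of $e\succ\vv$ (choosing $\psi$ realising arbitrary data for $\psi^{2}$ subject to \eqref{eq-kirchtype2}), i.e. $\phi\in\dom(\cD)$ and $\cD\phi=\eta$. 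Thus $\cD=\cD^{*}$.

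\emph{The gap $(-mc^{2},mc^{2})$.} From the Pauli relations $\sigma_{1}^{2}=\sigma_{3}^{2}=\mathrm{Id}$ and $\sigma_{1}\sigma_{3}+\sigma_{3}\sigma_{1}=0$ one has, formally, $\cD_{e}^{2}=-c^{2}\partial_{x}^{2}+m^{2}c^{4}$. Rigorously, expanding $\|\cD\psi\|^{2}$ on each edge and integrating the cross term (which, $\sigma_{1}\sigma_{3}$ being skew-Hermitian, is a total derivative) yields, for $\psi\in\dom(\cD)$, the identity $\|\cD\psi\|_{L^{2}(\cG,\C^{2})}^{2}=c^{2}\|\psi'\|_{L^{2}(\cG,\C^{2})}^{2}+m^{2}c^{4}\|\psi\|_{L^{2}(\cG,\C^{2})}^{2}$ plus a boundary term proportional to $\sum_{\vv}\sum_{e\succ\vv}(\pm)\langle\psi_{e}(\vv),\sigma_{1}\sigma_{3}\psi_{e}(\vv)\rangle_{\C^{2}}$, and the latter vanishes on $\dom(\cD)$ again by \eqref{eq-kirchtype1}--\eqref{eq-kirchtype2}. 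Consequently $\|\cD\psi\|^{2}\geq m^{2}c^{4}\|\psi\|^{2}$ on $\dom(\cD)$, so that $\cD^{2}\geq m^{2}c^{4}$; since $\cD=\cD^{*}$, the spectral theorem gives $\sigma(\cD)\cap(-mc^{2},mc^{2})=\emptyset$.

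\emph{The half-line contribution.} Because $\cG$ is noncompact it contains a half-line, which we identify with $\R^{+}$. Given $\lambda$ with $|\lambda|\geq mc^{2}$, set $k:=c^{-1}\sqrt{\lambda^{2}-m^{2}c^{4}}\geq 0$; the matrix $ck\,\sigma_{1}+mc^{2}\sigma_{3}$ has eigenvalues $\pm|\lambda|$, so there is $\xi\in\C^{2}\setminus\{0\}$ with $(ck\,\sigma_{1}+mc^{2}\sigma_{3})\xi=\lambda\xi$, i.e. $x\mapsto e^{\imath kx}\xi$ solves $\cD_{e}\psi=\lambda\psi$ along that half-line. Fix $\rho\in C_{c}^{\infty}(1,2)$ with $\rho\not\equiv 0$ and put $\psi_{n}:=n^{-1/2}\rho(\cdot/n)\,e^{\imath k\,\cdot}\,\xi$ on the half-line, extended by $0$ on the rest of $\cG$. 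Then $\psi_{n}\in\dom(\cD)$ (it is supported away from every vertex), $\|\psi_{n}\|_{L^{2}(\cG,\C^{2})}$ is a nonzero constant independent of $n$, and $\|(\cD-\lambda)\psi_{n}\|_{L^{2}(\cG,\C^{2})}=O(n^{-1})\to 0$ (the error comes only from the derivative falling on the cut-off); hence $\lambda\in\sigma(\cD)$. Combined with the previous step this yields \eqref{eq-sp_D}.

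The computations above are routine; the point that deserves care, and the one I expect to be the main obstacle, is the systematic bookkeeping of the orientation signs $(\pm)$ and the verification that the ``split'' Kirchhoff-type conditions \eqref{eq-kirchtype1}--\eqref{eq-kirchtype2} — continuity of the first component together with the balance of the second — are exactly what makes all the vertex boundary terms cancel, both in the symmetry argument and in the identity for $\|\cD\psi\|^{2}$. A fuller treatment is given in Appendix \ref{sec-linear}.
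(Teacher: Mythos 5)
Your proof is correct, but it takes a genuinely different route from the one the paper sketches in Appendix \ref{sec-linear}. For self-adjointness the paper does not argue directly: it either invokes the general result of \cite{BT-JMP} or reconstructs $\cD$ through the theory of boundary triplets, writing the vertex conditions as $A\Gamma_{0}\psi=B\Gamma_{1}\psi$ with $AB^{*}=BA^{*}$ so that self-adjointness holds by construction; your integration-by-parts computation of the boundary form, with the observation that continuity of the first components is exactly dual to the balance of the second ones at each vertex, is an elementary substitute that establishes the inclusion $\cD^{*}\subseteq\cD$ by hand. For the spectrum the paper proceeds via a Krein-type resolvent formula: the resolvent difference between $\cD$ and the decoupled direct-sum operator $\cD_{0}$ is of finite rank, so Weyl's theorem gives $\sigma_{ess}(\cD)=(-\infty,-mc^{2}]\cup[mc^{2},+\infty)$, and possible eigenvalues in the gap are then excluded by reducing the eigenvalue equation to the Kirchhoff Laplacian. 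You instead obtain the gap from the identity $\Vert\cD\psi\Vert^{2}=c^{2}\Vert\psi'\Vert^{2}+m^{2}c^{4}\Vert\psi\Vert^{2}$ (which the paper itself asserts without proof as \eqref{quadrato}, and which does require exactly the cancellation of vertex terms you single out) and you fill the two spectral half-lines with explicit Weyl singular sequences supported far out on an unbounded edge. Your route is more self-contained: it avoids the boundary-triplet machinery and the external computation of $\sigma(\cD_{0})$ imported from \cite{CMP-JDE}, and it treats the whole gap at once rather than only its point spectrum. The paper's route buys more structure (the $\gamma$-field, the Weyl function and the Krein formula), which yields finer resolvent information reusable elsewhere. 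The only step worth spelling out in your version is the passage from the operator inequality $\cD^{2}\geq m^{2}c^{4}$ to the absence of spectrum in $(-mc^{2},mc^{2})$, which is immediate from the spectral mapping theorem once self-adjointness is in place, as you indicate.
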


\noindent The discussion of the proof of Proposition \ref{spectrumkirchoff} is briefly presented in Appendix \ref{sec-linear}.

\begin{remark}
 Observe that the self-adjointness of $\cD$ follows directly from the main result of \cite{BT-JMP}, which holds for a wide class of linear vertex conditions.
\end{remark} 

\subsection{The associated quadratic form}\label{sec-quadraticform}

The standard cases of the Dirac operator $\R^d$, with $d=1,2,3$, do not actually require any further remark on the associated quadratic form, which can be easily defined using the Fourier transform (see e.g. \cite{ES-CMP}). Unfortunately, in the framework of the (noncompact) metric graphs this tool is not available and hence it is necessary to resort to the \emph{Spectral Theorem}, which represents a classical, but more abstract way, to diagonalize the operator and, consequently, define the associated quadratic form $\mathcal{Q}_\cD$ and its domain $\dom(\mathcal{Q}_\cD)$ as, for instance,
\[
 \dom(\mathcal{Q}_\cD):=\bigg\{\psi\in L^2(\cG,\C^2):\int_{\sigma(\cD)}|\nu|\,d\mu^\cD_\psi(\nu)\bigg\},\qquad \mathcal{Q}_\cD(\psi):=\int_{\sigma(\cD)}\nu\,d\mu^\cD_\psi(\nu),
\]
where $\mu^\cD_\psi$ denotes the spectral measure associated with $\cD$ and $\psi$.

Unfortunately, this definition is not the most suitable for the purposes of the paper. An alternative way to define the form domain of $\cD$ (that is, $\dom(\mathcal{Q}_\cD)$) is to use the well known \emph{Real Interpolation Theory} \cite{AF,A-p}. Here we just mention some basics, referring to Appendix \ref{sec-formdomain} for some further details.

Define the space
\begin{equation}\label{interpolateddomain}
Y:=\left[L^{2}(\cG,\C^{2}),\dom(\cD)\right]_{\frac{1}{2}},
\end{equation}
namely the interpolated space of order $1/2$ between $L^2$ and the domain of the Dirac operator. First, we note that $Y$ is a closed subspace of
\[
H^{1/2}(\cG,\C^2):=\bigoplus_{e\in\mE} H^{1/2}(I_{e})\otimes\C^{2},
\]
with respect to the norm induced by $H^{1/2}(\cG,\C^2)$. Indeed, $\dom(\cD)$ is clearly a closed subspace of $H^1(\cG,\C^2)$ and there results (arguing edge by edge) that 
\[
 H^{1/2}(\cG,\C^2)=\left[L^{2}(\cG,\C^{2}),H^1(\cG,\C^2)\right]_{\frac{1}{2}},
\]
so that the closedness of $Y$ follows by the very definition of interpolation spaces . As a consequence, by Sobolev embeddings there results that
 \be\label{sobolev}
  Y\hookrightarrow L^{p}(\cG,\C^{2}), \qquad\forall p\in[2,\infty),
 \ee
and that, in addition, the embedding in $L^{p}(\cK,\C^{2})$ is compact, due to the compactness of $\cK$.

On the other hand, there holds (see Appendix \ref{sec-formdomain})
\begin{equation}
 \label{eq-formeq}
 \dom(\mathcal{Q}_\cD)=Y,
\end{equation}
and hence the form domain inherits all the properties pointed out before, which are in fact crucial in the following of the paper.

\medskip
Finally, for the sake of simplicity (and following the literature on the NLD equation), we denote throughout the form domain by $Y$, in view of \eqref{eq-formeq}, and
\[
 \mathcal{Q}_\cD(\psi)=\frac{1}{2}\int_{\cG}\langle\psi,\cD\psi \rangle\dx,\qquad\text{and}\qquad\mathcal{Q}_\cD(\psi,\varphi)=\frac{1}{2}\int_{\cG}\langle\psi,\cD\varphi \rangle\dx,
\]
with $\langle\,\cdot\,,\,\cdot\,\rangle$ denoting the euclidean sesquilinear product of $\C^2$, since this does not give rise to misunderstandings. In particular, as soon as $\psi$ and/or $\varphi$ are smooth enough (e.g., if they belong to the operator domain) the previous expressions gain an actual meaning as Lebesgue integrals.

We also recall that in the sequel we denote by $\langle\,\cdot\,|\,\cdot\,\rangle$ duality pairings (the function spaces involved being clear from the context).

\begin{remark}
 Note that the the combination between Spectral Theorem and Interpolation Theory is (to the best of our knowledge) the sole possibility to define the quadratic form, since also classical duality arguments fail due to the fact that it is not true in general that $H^{-1/2}(\cG,\C^2)$ is the topological dual of $H^{1/2}(\cG,\C^2)$ (due to the presence of bounded edges).
\end{remark}


\subsection{Main results}
\label{sec-main}

We can now state the main results of the paper. Preliminarily, we give the definition of the bound states of the NLD and of the NLS equations on noncompact metric graphs with localized nonlinearities.

\begin{defi}[Bound states of the NLDE]
 Let $\cG$ be a noncompact metric graph with nonempty compact core $\cK$ and let $p>2$. Then, a \emph{bound state of the NLDE} with Kirchhoff-type vertex conditions and nonlinearity localized on $\cK$ is a spinor $0\not\equiv\psi\in\dom(\cD)$ for which there exists $\omega\in\R$ such that
 \begin{equation}
  \label{eq-NLDbound}
  \cD_e\psi_e-\chi_{_\cK}|\psi_e|^{p-2}\psi_e=\omega\psi_e,\qquad\forall e\in\mE,
 \end{equation}
 with $\chi_{_\cK}$ the characteristic function of the compact core $\cK$.
\end{defi}

\begin{defi}[Bound states of the NLSE]
 \label{defi-NLSE}
 Let $\cG$ be a noncompact metric graph with nonempty compact core $\cK$, and let $p>2$ and $\alpha>0$. Then, a \emph{bound state of the NLSE} equation with Kirchhoff vertex conditions and focusing nonlinearity localized on $\cK$ is a function $0\not\equiv u\in H^2(\cG)$ that satisfies
 \begin{gather}
 \label{eq-kirch1}
 u_e(\vv)=u_f(\vv),\qquad\forall e,f\succ\vv,\qquad\forall \vv\in\cK,\\[.4cm]
 \label{eq-kirch2}
 \sum_{e\succ v}\f{du_e}{dx_e}(\vv)=0,\qquad\forall \vv\in\cK,
\end{gather}
where $\frac{du_e}{dx_e}(\mathrm{v})$ stands for $u_e'(0)$ or $-u_e'(\ell_e)$ according to whether $x_e$ is equal to $0$ or $\ell_e$ at $\mathrm{v}$, and for which there exists $\lambda\in\R$ such that
 \begin{equation}\label{eq-NLSbound}
  -u_e''-\alpha\chi_{_\cK}|u_e|^{p-2}u_e=\lambda u_e,\qquad\forall e\in\mE.
 \end{equation}
\end{defi}

\begin{remark}
 In the definition above we allow the presence of the parameter $\alpha$, merely in view of the result stated by Theorem \ref{thm-limit}.
\end{remark}

\begin{remark}
 We recall that conditions \eqref{eq-kirch1}$\&$\eqref{eq-kirch2} make the laplacian self-adjoint on $\cG$ and are called Kirchhoff conditions. We also recall that the parameters $\omega$ and $\lambda$ are usually referred to as \emph{frequencies} of the bound states of the NLDE and NLSE (respectively).
\end{remark}

\begin{thm}[Existence and multiplicity of the bound states]
 \label{thm-bound}
 Let $\cG$ be a noncompact metric graph with nonempty compact core and let $m,c>0$ and $p>2$. Then, for every $\omega\in(-mc^2,mc^2)$ there exists infinitely many (distinct) pairs of bound states of frequency $\omega$ of the NLDE.
\end{thm}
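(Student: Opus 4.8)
The plan is to realize the bound states of frequency $\omega$ as the nonzero critical points of the action functional
\[
\cL_\omega(\psi):=\mathcal{Q}_\cD(\psi)-\tfrac{\omega}{2}\|\psi\|_{L^2(\cG,\C^2)}^2-\tfrac1p\|\psi\|_{L^p(\cK,\C^2)}^p,\qquad\psi\in Y,
\]
and to produce infinitely many of them by a $\mathbb{Z}_2$-symmetric min--max argument. First I would check that $\cL_\omega\in C^1(Y,\R)$ (the nonlinear term is differentiable by the embeddings \eqref{sobolev}, and only involves the restriction to $\cK$), that it is even -- in fact invariant under the $U(1)$-action $\psi\mapsto e^{\I\theta}\psi$ -- and that $\cL_\omega'(\psi)=0$ with $\psi\not\equiv0$ forces, by ODE regularity on the edges together with the vertex conditions, $\psi\in\dom(\cD)$, hence $\psi$ is a bound state (and conversely every bound state is such a critical point). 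Note that on each half-line the Euler--Lagrange equation is the free equation $\cD\psi=\omega\psi$, whose only $L^2$ solution -- since $\omega\in(-mc^2,mc^2)=\R\setminus\sigma(\cD)$ lies in the spectral gap -- is the exponentially decaying one.

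Second, I would exploit the spectral gap. As $\omega$ is in the resolvent set, $\cD-\omega$ is boundedly invertible with $\|(\cD-\omega)^{-1}\|\leq(mc^2-|\omega|)^{-1}$; letting $P^\pm$ be the spectral projectors of $\cD-\omega$ onto the positive/negative parts of its spectrum and $Y^\pm:=P^\pm Y$, one has $Y=Y^+\oplus Y^-$ with both summands infinite-dimensional and, on $Y$,
\[
\mathcal{Q}_\cD(\psi)-\tfrac{\omega}{2}\|\psi\|_{L^2}^2=\tfrac12\big\|\,|\cD-\omega|^{1/2}\psi^+\big\|_{L^2}^2-\tfrac12\big\|\,|\cD-\omega|^{1/2}\psi^-\big\|_{L^2}^2,
\]
the two terms giving norms on $Y^\pm$ equivalent to those induced by $Y=\dom(|\cD|^{1/2})$ (again using the gap). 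Thus $\cL_\omega$ has the standard strongly indefinite shape $\tfrac12\|\psi^+\|^2-\tfrac12\|\psi^-\|^2-\Phi(\psi)$ with $\Phi(\psi)=\tfrac1p\|\psi\|_{L^p(\cK,\C^2)}^p\geq0$ even and superquadratic. The payoff of the localization is compactness: since $Y\hookrightarrow L^p(\cK,\C^2)$ compactly, $\Phi$ and $\Phi'$ are weakly sequentially continuous, and together with the Ambrosetti--Rabinowitz-type identity $\langle\Phi'(\psi),\psi\rangle=p\,\Phi(\psi)$ ($p>2$) this gives boundedness and then strong convergence of Palais--Smale (or Cerami) sequences, so that $\cL_\omega$ satisfies $(PS)_c$ at every level $c\in\R$.

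It would then remain to run a $\mathbb{Z}_2$-symmetric linking/fountain scheme for even strongly indefinite functionals (e.g.\ the abstract theorems of Bartsch--Ding): $\cL_\omega\geq a>0$ on a small sphere of $Y^+$, $\cL_\omega\leq0$ on the boundary of large truncated cones built over an increasing sequence of finite-dimensional subspaces of $Y^+$, whence an unbounded sequence of critical values and infinitely many distinct pairs $\pm\psi_k$ of bound states. I expect the linking (geometric) step to be the main obstacle: precisely because the nonlinearity lives only on $\cK$, $\cL_\omega$ fails to go to $-\infty$ along the directions of $Y^+$ whose restriction to $\cK$ vanishes, and -- by the very compactness of $Y\hookrightarrow L^p(\cK,\C^2)$ -- no infinite-dimensional subspace of $Y^+$ carries a uniform lower bound on $\|\cdot\|_{L^p(\cK,\C^2)}$, so the naive fountain geometry breaks down. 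To circumvent this I would remove the half-lines: on each of them a bound state coincides with the explicit decaying solution of $\cD\psi=\omega\psi$ (with $\psi^2$ a fixed, purely imaginary multiple of $\psi^1$ at the attaching vertex), and substituting this back turns the problem into a nonlinear eigenvalue problem on the \emph{compact} graph $\cK$ for the Dirac operator with the original Kirchhoff-type conditions at the interior vertices and Robin-type conditions (carrying an $\omega$-dependent, but real, coefficient) at the vertices where half-lines were attached. This effective operator is self-adjoint with \emph{compact} resolvent, hence has discrete spectrum and an orthonormal basis of eigenspinors; on the associated ($\omega$-dependent) form domain the reduced action keeps the strongly indefinite structure, but now the nonlinear term is supported on all of $\cK$, so the fountain geometry goes through verbatim and $(PS)$ is automatic by compactness of $\cK$. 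Establishing that this reduction is an exact equivalence -- also at the level of the functionals -- and handling the finite-dimensional kernel that appears when $\omega$ is an eigenvalue of the effective operator (where the strongly indefinite fountain theorem with nontrivial kernel, made non-degenerate by the superquadratic term, is required) is the technical heart of the argument.
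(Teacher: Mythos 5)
Your overall architecture coincides with the paper's: bound states as critical points of the strongly indefinite action on $Y=Y^{+}\oplus Y^{-}$, compactness of Palais--Smale sequences coming from the compact embedding $Y\hookrightarrow L^{p}(\cK,\C^{2})$ together with the identity $\langle\Phi'(\psi),\psi\rangle=p\,\Phi(\psi)$, and an even (genus/linking) min--max scheme. The divergence is at the geometric step, and there your proposal has a gap. You assert that the direct linking geometry ``breaks down'' because no infinite-dimensional subspace of $Y^{+}$ carries a uniform lower bound on $\|\cdot\|_{L^{p}(\cK,\C^{2})}$, and you propose instead to eliminate the half-lines and work on the compact core with $\omega$-dependent Robin-type conditions. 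But an infinite-dimensional subspace with such a uniform bound is not what the argument requires: one only needs, for each $N$, an $N$-dimensional $Z_{N}\subset Y^{+}$ on which (together with the relevant part of $Y^{-}$) the $L^{2}(\cK)$-norm controls the $Y$-norm, and this the paper obtains directly (Lemma \ref{testlinking}). Take $\eta_{j}=(\eta_{j}^{1},0)^{T}$ with $\eta_{j}^{1}\in C_{0}^{\infty}(I_{e})$ for a single bounded edge $e$; the identity $\int_{\cG}\langle\eta,\cD\eta\rangle\dx=m\int_{\cG}|\eta^{1}|^{2}\dx$ guarantees $\eta_{j}^{+}\neq0$, and one sets $Z_{N}=\operatorname{span}\{\eta_{1}^{+},\dots,\eta_{N}^{+}\}$. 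The projections $\eta_{j}^{\pm}$ are of course no longer supported in $\cK$, but for $\psi=\varphi+\xi$ with $\xi=\sum\lambda_{j}\eta_{j}^{+}$ one splits $\varphi=\varphi^{\perp}+\chi$, $\chi=\sum\lambda_{j}\eta_{j}^{-}$, and observes that $\xi+\chi=\sum\lambda_{j}\eta_{j}$ \emph{is} supported in $\cK$; a short computation then gives $\int_{\cK}|\varphi+\xi|^{2}\dx\geq\int_{\cG}|\xi+\chi|^{2}\dx$, which by finite-dimensionality is bounded below by $c\,\|\xi\|^{2}$, whence $\cL(\psi)\leq R^{2}/2-CR^{p}\leq0$ on $\partial\cM_{N}$ for $R$ large (with a separate argument, via the anticommuting matrix $\sigma_{2}$, when the positive part of this space of test spinors is finite-dimensional). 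This is precisely the ``delicate point created by the localization'' that your proposal identifies but does not resolve.

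Your alternative route --- solving explicitly on the half-lines and reducing to a nonlinear problem on $\cK$ for an effective operator with compact resolvent --- is a genuinely different and in principle viable strategy (the analogous reduction is standard for the NLS with localized nonlinearity), but as written it is only a program: you yourself defer its ``technical heart'' (exact equivalence of the two variational problems including the boundary terms, self-adjointness and spectral structure of the $\omega$-dependent effective operator, the behaviour of its form domain). Until that is carried out, the proposal does not contain a proof of the key geometric lemma. Two smaller remarks: the kernel degeneracy you worry about cannot occur, since $\omega$ being an eigenvalue of the effective operator on $\cK$ is equivalent to $\omega$ being an eigenvalue of $\cD$ on $\cG$, which is excluded by $\sigma(\cD)\cap(-mc^{2},mc^{2})=\emptyset$; and the paper does not establish (nor need) that the min--max levels $\alpha_{N}$ are unbounded --- multiplicity when levels coincide is recovered from the properties of the Krasnoselskij genus, so you should not rely on ``an unbounded sequence of critical values'' without proof.
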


Some comments are in order. First of all, to the best of our knowledge this is the first rigorous result on the stationary solutions of the nonlinear Dirac equation on metric graphs.

On the other hand, some relevant differences can be observed with respect to the Schr\"odinger case. Bound states of Theorem \ref{thm-bound} arise (as we will extensively show in the next section) as critical points of the functional
\[
 \cL(\psi):=\frac{1}{2}\int_{\cG}\langle\psi,(\cD-\omega)\psi \rangle \dx-\frac{1}{p}\int_{\cK}\vert\psi\vert^{p}\dx.
\]
However, due to the spectral properties of $\cD$, the kinetic part of $\cL$ (that is, the quadratic form associated with $\cD$) is unbounded from below even if one constrains the functional to the set of the spinors with $L^2$-norm fixed, in contrast to the NLS functional. As a consequence, no minimization can be performed and, hence, the extensions of the direct methods of calculus of variations developed for the Schr\"odinger case are useless.

Furthermore, such a kinetic part is also strongly indefinite, so that the functional possesses a significantly more complex geometry with respect to the NLS case, thus calling for more technical (albeit classical) tools of Critical Point Theory. 

Finally, the spinorial structure of the problem as well as the implicit definition of the kinetic part of the functional, whose domain is not embedded in $L^\infty(\cG,\C^2)$, prevent the (direct) use of the tools developed for the NLSE on graphs such as, for instance, rearrangements and ``graph surgery''.

In view of these issues, in the proof of Theorem \ref{thm-bound} we rather adapted some techniques from the literature on the NLDE on standard noncompact domains. Anyway, the fact that we are dealing with a nonlinearity localized only on a compact part of the graph makes the study of the geometry of the functional a bit more delicate  as we will see in Lemma \ref{testlinking} (while it clearly simplifies the compactness issues with respect to the extended case). For the same reason, the uniform $H^{1}$-boundedness needed to study the non relativistic limit of the bound states (see below) is achieved in different steps (see Section \ref{sec-limit}).

\medskip
The second (and main) result of the paper, on the other hand, shows the connection between the NLDE and the NLSE, suggested by the physical interpretation of the two models.

Before presenting the statement, we recall that, by the definition of $\cD$, the bound states obtained via Theorem \ref{thm-bound} depend in fact on the speed of light $c$. As a consequence, they should be meant as bound states of frequency $\omega$ of the NLDE \emph{at speed of light $c$}.

\begin{thm}[Nonrelativistic limit of the bound states]
 \label{thm-limit}
 Let $\cG$ be a noncompact metric graph with nonempty compact core, and let $m>0$, $p\in(2,6)$ and $\lambda<0$. Let also $(c_{n}), (\omega_{n})$ be two real sequences such that
 \begin{gather}
 \label{cdiverge}
 0<c_{n},\omega_n\rightarrow+\infty,\\[.2cm]
 \label{eq-smaller}
 \omega_{n}<mc^{2}_{n},\\[.2cm]
 \label{parameterconvergence}
 \omega_n-mc_n^2\longrightarrow\f{\lambda}{m},
\end{gather}
as $n\rightarrow+\infty$. If $\{\psi_n=(\psi_n^1,\psi_n^2)^T\}$ is a bound state of frequency $\omega_n$ of the NLDE \eqref{eq-NLDbound} at speed of light $c_n$, then, up to subsequences, there holds
 \[
  \psi_n^1\longrightarrow u\qquad\text{and}\qquad\psi_n^2\longrightarrow0\qquad\text{in}\quad H^1(\cG),
 \]
 as $n\rightarrow+\infty$, where $u$ is a bound state of frequency $\lambda$ of the NLSE \eqref{eq-NLSbound} with $\alpha=2m$.
\end{thm}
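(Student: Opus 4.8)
The proof follows the classical route for nonrelativistic limits of Dirac-type equations, adapted to the metric graph, and the only genuinely hard point is a uniform $H^1$ bound. Set $\lambda_n:=\omega_n-mc_n^2$, so that $\lambda_n\to\lambda/m<0$ by \eqref{parameterconvergence}. Writing \eqref{eq-NLDbound} componentwise, its second line reads $(\omega_n+mc_n^2+\chi_{_\cK}|\psi_n|^{p-2})\psi_n^2=-\imath c_n(\psi_n^1)'$, so that for $n$ large (the left factor being positive) one may \emph{eliminate the ``small'' component}, $\psi_n^2=-\imath c_n(\psi_n^1)'\big/(\omega_n+mc_n^2+\chi_{_\cK}|\psi_n|^{p-2})$, while its first line reads $-\imath c_n(\psi_n^2)'=(\lambda_n+\chi_{_\cK}|\psi_n|^{p-2})\psi_n^1$; substituting the former into the latter produces, at least formally, a Schr\"odinger-type equation for $\psi_n^1$ with coefficients converging to those of \eqref{eq-NLSbound} with $\alpha=2m$. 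The first and delicate step is to prove that $(\psi_n)$ is bounded in $H^1(\cG,\C^2)$ (whence, by the two identities above, $(c_n\psi_n^2)$ is bounded in $H^1(\cG,\C^2)$, so $\psi_n^2\to0$ in $H^1(\cG)$). The starting point is that, by \eqref{eq-sp_D}, $\mathrm{dist}(\omega_n,\sigma(\cD))=mc_n^2-\omega_n=-\lambda_n\to-\lambda/m>0$, so $(\cD-\omega_n)^{-1}$ is bounded; using $\cD^2=-c_n^2\tfrac{d^2}{dx^2}+m^2c_n^4$ componentwise (the Kirchhoff-type conditions making the relevant integrations by parts licit) one checks the \emph{uniform} estimate $\|(\cD-\omega_n)^{-1}g\|_{H^1(\cG,\C^2)}\le C\|g\|_{L^2(\cG,\C^2)}$, and hence, rewriting \eqref{eq-NLDbound} as $\psi_n=(\cD-\omega_n)^{-1}(\chi_{_\cK}|\psi_n|^{p-2}\psi_n)$, $\|\psi_n\|_{H^1(\cG,\C^2)}\le C\|\psi_n\|_{L^{2(p-1)}(\cK,\C^2)}^{p-1}$. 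One then closes the estimate by a bootstrap on the compact core, controlling $\|\psi_n\|_{L^\infty(\cK,\C^2)}$ and $\|\psi_n\|_{L^q(\cK,\C^2)}$ via the (compact) embedding of $H^1(\cK,\C^2)$ into $L^q(\cK,\C^2)$ together with an a priori control of $\|\psi_n\|_{L^2(\cK,\C^2)}$; here the $L^2$-subcriticality $p<6$ is essential, and since $\cL$ is strongly indefinite and the resolvent estimate alone only yields a lower bound on $\|\psi_n\|$, the argument has to be organised in several steps. This is the main obstacle.

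Once the bounds are available, compactness follows. By reflexivity, along a subsequence $\psi_n^1\rightharpoonup u$ and $c_n\psi_n^2\rightharpoonup v$ in $H^1(\cG)$, while $\psi_n^2\to0$ in $H^1(\cG)$. On each half-line \eqref{eq-NLDbound} is linear with $\omega_n$ in the spectral gap, so there $\psi_n$ is the unique exponentially decaying $L^2$ solution, of rate $\mu_n=c_n^{-1}\sqrt{m^2c_n^4-\omega_n^2}\to\sqrt{-2\lambda}>0$; hence the half-line tails are uniformly small and $(\psi_n^1)$, $(c_n\psi_n^2)$ are tight at infinity. Tightness together with the compactness of $\cK$ (and $H^1\hookrightarrow C_0$ on each half-line) upgrades the convergences to $\psi_n^1\to u$ and $c_n\psi_n^2\to v$ strongly in $L^q(\cG)$ for every $q\in[2,\infty)$, and uniformly on each edge.

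Next I identify the limit and improve the convergence. Since $c_n^2\big/(\omega_n+mc_n^2+\chi_{_\cK}|\psi_n|^{p-2})\to\tfrac{1}{2m}$ uniformly (using $c_n,\omega_n\to\infty$ and the $L^\infty(\cK)$-bound) and $(\psi_n^1)'\rightharpoonup u'$ in $L^2(\cG)$, the identity for $\psi_n^2$ gives $v=-\tfrac{\imath}{2m}u'$, in particular $u\in H^2(\cG)$. In the other identity, $-\imath(c_n\psi_n^2)'=(\lambda_n+\chi_{_\cK}|\psi_n|^{p-2})\psi_n^1$, the right-hand side converges \emph{strongly} in $L^2(\cG)$ (by the strong $L^q(\cK)$ convergence of $\psi_n^1$ and $|\psi_n|^{p-2}\to|u|^{p-2}$ on $\cK$), so $(c_n\psi_n^2)'$ converges strongly in $L^2(\cG)$ and therefore $c_n\psi_n^2\to v$ strongly in $H^1(\cG)$; letting $n\to\infty$ in this identity shows that $u$ solves \eqref{eq-NLSbound} with $\alpha=2m$. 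Consequently $\psi_n^2=c_n^{-1}(c_n\psi_n^2)\to0$ strongly in $H^1(\cG)$, and from $(\psi_n^1)'=\imath c_n^{-2}(\omega_n+mc_n^2+\chi_{_\cK}|\psi_n|^{p-2})(c_n\psi_n^2)\to 2m\imath v=u'$ strongly in $L^2(\cG)$ one obtains $\psi_n^1\to u$ strongly in $H^1(\cG)$ (indeed in $H^2(\cG)$, by one further differentiation).

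It remains to check the vertex conditions and the nontriviality of $u$. The continuity \eqref{eq-kirch1} for $u$ follows from \eqref{eq-kirchtype1} and the edgewise uniform convergence $\psi_n^1\to u$. For \eqref{eq-kirch2}, one inserts the expression for $\psi_n^2$ into the balance condition \eqref{eq-kirchtype2}, multiplies by $\omega_n+mc_n^2$ and lets $n\to\infty$, using the $H^2(\cG)$-convergence of $\psi_n^1$ to pass to the limit in the traces of $(\psi_n^1)'$ at the vertices; this yields $\sum_{e\succ\vv}\tfrac{du_e}{dx_e}(\vv)=0$. Finally $u\not\equiv0$: if $\psi_n^1\to0$ in $L^\infty(\cK)$, then $\|\psi_n\|_{H^1(\cG,\C^2)}\le C\|\chi_{_\cK}|\psi_n|^{p-2}\psi_n\|_{L^2(\cG,\C^2)}\le o(1)\|\psi_n\|_{H^1(\cG,\C^2)}$, forcing $\psi_n\equiv0$ for $n$ large, against $\psi_n\not\equiv0$; since $\psi_n^1\to u$ uniformly on $\cK$ this gives $\|u\|_{L^\infty(\cK)}>0$. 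Hence $u$ is a bound state of frequency $\lambda$ of the NLSE \eqref{eq-NLSbound} with $\alpha=2m$, and the only genuinely hard point of the proof is the uniform bound of the first step.
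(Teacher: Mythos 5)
There is a genuine gap at precisely the point you yourself flag as ``the main obstacle'': the uniform $H^1$ bound is never actually established. Your resolvent identity $\psi_n=(\cD-\omega_n)^{-1}(\chi_{_\cK}|\psi_n|^{p-2}\psi_n)$ and the (correct, and uniformly valid) estimate $\|(\cD-\omega_n)^{-1}g\|_{H^1}\leq C\|g\|_{L^2}$ only give $\|\psi_n\|_{H^1(\cG,\C^2)}\leq C\|\psi_n\|_{L^{2(p-1)}(\cK,\C^2)}^{p-1}$, which bounds the $H^1$ norm by a \emph{superlinear} power of a weaker norm and is therefore compatible with $\|\psi_n\|_{H^1}\to\infty$. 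No bootstrap can close this without an independent a priori bound on a low norm, and you simply postulate ``an a priori control of $\|\psi_n\|_{L^2(\cK,\C^2)}$'' without producing it. This is exactly where the paper's proof uses information you never invoke: the bound states are the ones produced at the minimax levels $\alpha_N^n$ of Theorem \ref{thm-bound}, the linking sets $Y^-\oplus Z_N$ are built from $n$-independent compactly supported spinors so that $\alpha_N^n\leq\sup_{Y^-\oplus Z_N}\cL_n\leq C$ uniformly in $n$ (Lemma \ref{lpbound}), and the identity $\cL_n(\psi_n)-\tfrac12\langle d\cL_n(\psi_n)|\psi_n\rangle=(\tfrac12-\tfrac1p)\int_\cK|\psi_n|^p$ converts the level bound into the uniform $L^p(\cK)$ bound; the $L^2(\cG)$ bound then follows from the spectral projections $P^\pm_{\omega_n}$ (Lemma \ref{l2bound}) and the $H^1$ bound from Gagliardo--Nirenberg and $p<6$ (Lemma \ref{h1bound}). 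For an arbitrary bound state, with no variational or energy-level information, the required a priori control is not available, so your argument as written does not prove the theorem.

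The remainder of your plan is essentially sound and in places diverges usefully from the paper. Your elimination of the small component and direct passage to the limit in the two first-order equations (using strong $L^q(\cK)$ convergence and the explicit exponential tails on the half-lines) replaces the paper's route through the auxiliary functional $J$, Lemma \ref{lem-equiv} and the Palais--Smale argument of Lemma \ref{lem-PSNLS}; both identify the same limit equation and vertex conditions, yours perhaps more transparently, though your treatment of \eqref{eq-kirch2} needs the edgewise convergence of the traces of $(\psi_n^1)'$, i.e.\ essentially the same integration-by-parts bookkeeping the paper performs. Your nontriviality argument --- if $\psi_n\to0$ in $L^\infty(\cK,\C^2)$ then $\|\psi_n\|_{H^1}\leq o(1)\|\psi_n\|_{H^1}$ via the uniform resolvent bound, forcing $\psi_n\equiv0$ --- is correct and noticeably shorter than the contradiction argument of Lemma \ref{lem-nonzero}. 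But all of this is downstream of the missing uniform bound, so the proposal is incomplete where it matters most.
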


First we recall that the expression ``speed of light $c_n$, with $c_n\to\infty$'' corresponds to the fact that we are investigating the regime where the relativistic parameter $c$ is large. The main interest of Theorem \ref{thm-limit} is thus given by the fact that we recover the NLSE model in the limit, providing a mathematical proof of this fact, quite natural from the physical point of view.

On the other hand, the fact that $\alpha=2m$ is consistent with the fact that in the nonrelativistic limit the kinetic (free) part of the hamiltonian of a particle is given by $-\frac{1}{2m}\Delta.$

Moreover, we point out that Theorem \ref{thm-limit}, in contrast to Theorem \ref{thm-bound}, holds only for a fixed range of power exponents, namely the so-called $L^2$-subcritical case $p\in(2,6)$. However, this is the only range of powers for which multiplicity results are known for the NLSE (see \cite{ST-JDE}). On the other hand, these results are parametrized by the $L^2$ norm of the wave function while Theorem \ref{thm-limit} is parametrized by the frequency and hence (in some sense) it presents as a byproduct a new result for the NLSE.

\begin{remark}
 We also mention that Theorem \ref{thm-bound} and Theorem \ref{thm-limit} can be proved, without significant modifications, also in the case of more general nonlinearities, by means of several ad hoc assumptions. We limit ourselves to the power case in this context for the sake of simplicity. 
\end{remark}

\section{Existence of infinitely many bound states}
\label{sec-bound}

In this Section we prove Theorem \ref{thm-bound}. Note that, since the parameter $c$ here does not play any role, we set $c=1$ throughout the section. In addition, in the sequel (unless stated otherwise) we always tacitly assume that the mass parameter $m$ is positive, the frequency $\omega\in(-m,m)$, the power of the nonlinearity $p>2$ and that $\cG$ is a noncompact metric graph with nonempty compact core.


\subsection{Preliminary results}

The first point is to prove that the bound states coincide with the critical points of the $C^{2}$ action functional $\cL:Y\to\R$ defined by
\be
\label{action}
\cL(\psi)=\frac{1}{2}\int_{\cG}\langle\psi,(\cD-\omega)\psi \rangle \dx-\frac{1}{p}\int_{\cK}\vert\psi\vert^{p}\dx.
\ee
Recall that (as $c=1$) the spectrum of $\cD$ is given by 
\be
\label{eq-spectrum2}
\sigma(\cD)=(-\infty,-m]\cup[m,+\infty).
\ee

\begin{prop}
\label{prop-boundcrit}
 A spinor is a bound state of frequency $\omega$ of the NLDE if and only if it is a critical point of $\cL$.
\end{prop}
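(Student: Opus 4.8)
The plan is to recognise the (weak) Euler--Lagrange equation of $\cL$ on the form domain $Y$ as the NLDE \eqref{eq-NLDbound}, and then to upgrade it from the weak to the strong formulation by means of the first representation theorem for closed sesquilinear forms, exploiting that $\dom(\mathcal Q_\cD)=Y$ (see \eqref{eq-formeq}) and that $\cD$ is exactly the self-adjoint operator generated by $\mathcal Q_\cD$ (Proposition \ref{spectrumkirchoff}).

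First I would record the differential of $\cL$ on $Y$. Recall $\cL\in C^2(Y,\R)$: the quadratic part is continuous on $Y$ precisely because $Y=\dom(\mathcal Q_\cD)$, while $\psi\mapsto\tfrac1p\int_\cK|\psi|^p\dx$ is smooth thanks to the embedding \eqref{sobolev} and to the compactness of $Y\hookrightarrow L^p(\cK,\C^2)$. A polarization computation then gives, for all $\psi,\varphi\in Y$,
\[
 d\cL(\psi)[\varphi]=\mathrm{Re}\Big[2\,\mathcal Q_\cD(\psi,\varphi)-\omega\int_\cG\langle\psi,\varphi\rangle\dx-\int_\cK|\psi|^{p-2}\langle\psi,\varphi\rangle\dx\Big],
\]
and, since $d\cL(\psi)$ is $\R$-linear, testing against both $\varphi$ and $\I\varphi$ shows that a spinor $\psi\not\equiv0$ is a critical point of $\cL$ if and only if
\[
 2\,\mathcal Q_\cD(\psi,\varphi)=\int_\cG\big\langle\,\omega\psi+\chi_{_\cK}|\psi|^{p-2}\psi,\varphi\big\rangle\dx\qquad\text{for every }\varphi\in Y,
\]
which is precisely the weak formulation of \eqref{eq-NLDbound}.

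The implication ``bound state $\Rightarrow$ critical point'' is then immediate: if $\psi$ solves \eqref{eq-NLDbound}, then $\psi\in\dom(\cD)\subset Y$, and the compatibility of the form $\mathcal Q_\cD$ with the operator $\cD$ gives $2\,\mathcal Q_\cD(\psi,\varphi)=\int_\cG\langle\cD\psi,\varphi\rangle\dx$ for all $\varphi\in Y$; substituting $\cD\psi=\omega\psi+\chi_{_\cK}|\psi|^{p-2}\psi$ yields the criticality identity above. For the converse, let $\psi\in Y$ be a nontrivial critical point and set $g:=\omega\psi+\chi_{_\cK}|\psi|^{p-2}\psi$. Since $\cK$ is compact, \eqref{sobolev} gives in particular $Y\hookrightarrow L^{2(p-1)}(\cK,\C^2)$, so $|\psi|^{p-2}\psi\in L^2(\cK,\C^2)$ and hence $g\in L^2(\cG,\C^2)$. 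The criticality identity then reads $2\,\mathcal Q_\cD(\psi,\varphi)=\int_\cG\langle g,\varphi\rangle\dx$ for all $\varphi\in Y=\dom(\mathcal Q_\cD)$, so the first representation theorem forces $\psi\in\dom(\cD)$ and $\cD\psi=g$, i.e. $\psi$ solves \eqref{eq-NLDbound} and is therefore a bound state of frequency $\omega$.

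The only genuinely delicate point is this last step, and it rests on two inputs: that the form domain is \emph{exactly} $Y$ (this is \eqref{eq-formeq}, proved in Appendix \ref{sec-formdomain}) and that the localized nonlinearity is square-integrable, which is where the compactness of the core is used. Granted these, the representation theorem produces automatically both the $H^1$-regularity and the vertex conditions \eqref{eq-kirchtype1}--\eqref{eq-kirchtype2} that define $\dom(\cD)$. As an alternative one could bootstrap directly on each edge --- deducing $\psi_e\in H^1(I_e)$ from $\sigma_1\psi_e'\in L^2(I_e)$ and then integrating by parts in the weak equation to recover the vertex conditions --- but the form-theoretic route is shorter and is exactly what the identification \eqref{eq-formeq} was designed for.
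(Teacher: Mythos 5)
Your argument is correct in substance but follows a genuinely different route from the paper. The paper proves the nontrivial implication by a concrete, edge-by-edge bootstrap: testing \eqref{eq-boundweak} against spinors whose only nonzero component is a $C_0^\infty(I_e)$ function yields $\psi_e\in H^1(I_e)$ and the strong equation on each edge, and then testing against explicit elements of $\dom(\cD)$ concentrated at a vertex recovers \eqref{eq-kirchtype2} and \eqref{eq-kirchtype1} one vertex (and one pair of incident edges) at a time via integration by parts. You instead run the abstract operator--form correspondence: since $\dom(\mathcal{Q}_\cD)=Y$ by \eqref{eq-formeq} and $g:=\omega\psi+\chi_{_\cK}|\psi|^{p-2}\psi\in L^2(\cG,\C^2)$ (correctly justified via \eqref{sobolev} and the compactness of $\cK$), the identity $2\,\mathcal{Q}_\cD(\psi,\varphi)=\langle g,\varphi\rangle_{L^2}$ for all $\varphi\in Y$ forces $\psi\in\dom(\cD)$ and $\cD\psi=g$, and membership in $\dom(\cD)$ delivers the regularity and the vertex conditions by definition. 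The one point you should state more carefully is the appeal to ``the first representation theorem for closed sesquilinear forms'': in its standard (Kato) form that theorem requires a sectorial form, whereas $\mathcal{Q}_\cD$ is strongly indefinite, as \eqref{eq-sp_D} makes clear. The conclusion you need is nevertheless true and elementary: writing $\cD=U|\cD|$ with $U=\mathrm{sgn}(\cD)$ unitary (legitimate since $0\notin\sigma(\cD)$) and $2\,\mathcal{Q}_\cD(\psi,\varphi)=\langle U|\cD|^{1/2}\psi,|\cD|^{1/2}\varphi\rangle_{L^2}$, the hypothesis says precisely that $U|\cD|^{1/2}\psi\in\dom\big((|\cD|^{1/2})^*\big)=\dom(|\cD|^{1/2})$, whence $\psi\in\dom(|\cD|)=\dom(\cD)$ and $\cD\psi=g$ because $U$ commutes with $|\cD|^{1/2}$. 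With that substitution your proof is complete and shorter than the paper's; what the paper's route buys in exchange is a self-contained derivation that makes explicit how each of the Kirchhoff-type conditions emerges from the choice of test spinors, without relying on the interpolation-theoretic identification of the form domain.
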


\begin{proof}
 One can easily see that a bound state of frequency $\omega$ of the NLDE is a critical point of $\cL$. Let us prove, therefore, the converse.
  
  Assume that $\psi$ is a critical point of $\cL$, namely that $\psi\in Y$ and 
 \begin{equation}
  \label{eq-boundweak}
   \langle d\cL(\psi)|\varphi\rangle=\int_\cG\langle\psi,(\cD-\omega)\phi\rangle\dx-\int_\cK|\psi|^{p-2}\langle\psi,\varphi\rangle\dx=0,\qquad\forall \phi\in Y.
 \end{equation}
 Now, for any fixed edge $e\in\mE$, if one chooses
 \begin{equation}
  \label{eq-test}
  \phi=\begin{pmatrix}\phi^1\\[.2cm] 0\end{pmatrix},\qquad\text{with}\quad0\neq\phi^1\in C_0^\infty(I_e)
 \end{equation}
 (namely, $\phi^1$ possesses the sole component $\phi_e^1$, which is a test function of $I_e$), then
 \[
  \imath\int_{I_e}\psi_e^2\,(\overline{\phi}_e^1)'\dxe=\int_{I_e}\underbrace{\left[(m-\omega)\psi_e^1+\chi_{-\cK}|\psi_e|^{p-2}\psi_e^1\right]}_{\in L^2(I_e)}\overline{\phi}_e^1\dxe,
 \]
 so that $\psi_e^2\in H^1(I_e)$ and an integration by parts yields the first line of \eqref{eq-NLDbound}. On the other hand, simply exchanging the role of $\phi^1$ and $\phi^2$ in \eqref{eq-test}, one can see that $\psi_e^1\in H^1(I_e)$ and satisfies the second line of \eqref{eq-NLDbound}, as well.
 
 It is then left to prove that $\psi$ fulfills \eqref{eq-kirchtype1} and \eqref{eq-kirchtype2}. First, fix a vertex $\vv$ of the compact core and choose
 \[
  \dom(\cD)\ni\phi=\begin{pmatrix}\phi^1\\[.2cm] 0\end{pmatrix},\qquad\text{with}\qquad\phi^1(\vv)=1,\qquad\phi(\vv')=0\quad\forall\vv'\in\cK,\,\vv'\neq\vv.
 \]
 Integrating by parts in \eqref{eq-boundweak} and using \eqref{eq-NLDbound}, there results
 \[
  \sum_{e\succ\vv}\phi_e^1(\vv)\psi_e^2(\vv)_{\pm}=0
 \]
 and, hence, $\psi^2$ satisfies \eqref{eq-kirchtype2} (recall the meaning of $\psi_e^2(\vv)_{\pm}$ explained in Definition \ref{defi-dirac}). On the other hand, let $\vv$ be a vertex of the compact core with degree greater than or equal to 2 (for vertices of degree 1 \eqref{eq-kirchtype1} is satisfied for free). Moreover, let
 \[
  \dom(\cD)\ni\phi=\begin{pmatrix}0\\[.2cm] \phi^2\end{pmatrix},\qquad\text{with}\qquad\phi_{e_1}^2(\vv)_{\pm}=-\phi_{e_2}^2(\vv)_{\pm},\qquad\phi_e^2(\vv)=0\quad\forall e\neq e_1,e_2,
 \]
 where $e_1$ and $e_2$ are two edges incident at $\vv$, and $\phi_e^2\equiv0$ on each edge not incident at $v$. Again, integrating by parts in \eqref{eq-boundweak} and using \eqref{eq-NLDbound},
 \[
  \phi_{e_1}^2(\vv)_{\pm}\psi_{e_1}^1(\vv)+\phi_{e_2}^2(\vv)_{\pm}\psi_{e_2}^1(\vv)=0.
 \]
 Then, repeating the procedure for any pair of edges incident at $\vv$ one gets \eqref{eq-kirchtype1}.
 
 Finally, iterating the same arguments on all the vertices one concludes the proof.
\end{proof}

\begin{remark}
In addition to Proposition \ref{prop-boundcrit}, it is worth mentioning that, due to the linear behavior outside the compact core, the bound states are known explicitly on the half-lines. Pecisely, if $e\in\mE$ is a half-line with starting point $\vv$, then
\begin{equation}\label{explicithalflines}
 \left\{\begin{array}{l}
  \displaystyle \psi_e^1(x_e)=-\imath\,\psi_e^2(\vv)\sqrt{\frac{m+\omega}{m-\omega}}\,e^{-\sqrt{m^2-\omega^2}x_e}\\[.6cm]
  \displaystyle \psi_e^2(x_e)=\psi_e^2(\vv)e^{-\sqrt{m^2-\omega^2}x_e}
 \end{array}\right.,\qquad x_e\in[0,\infty).
\end{equation}
\end{remark}

\medskip
The second preliminary step is to prove that the functional $\cL$ possesses a so-called \emph{linking geometry} (\cite{ES-CMP,S}), since this is the main tool in order to prove the existence of Palais-Smale sequences.

Recall that, according to \eqref{eq-spectrum2} we can decompose the form domain $Y$ as the orthogonal sum of the positive and negative spectral subspaces for the operator $\cD$, i.e.
\[
Y=Y^{+}\oplus Y^{-}.
\]
As a consequence, every $\psi\in Y$ can be written as $\psi=P^+\psi+P^-\psi=:\psi^{+}+\psi^{-}$, where $P^{\pm}$ are the orthogonal projectors onto $Y^{\pm}$. In addition one can find an equivalent (but more convenient) norm for $Y$, i.e.
\[
\Vert\psi\Vert:=\Vert\sqrt{\vert\cD\vert}\psi\Vert_{L^{2}},\qquad \forall\psi\in Y.
\]
\begin{remark}
Borel functional calculus for self-ajoint operators \cite[Theorem VIII.5]{RS-I} allows to define the operators $\vert\cD\vert^{\alpha}$, $\alpha>0$, and more general operators of the form $f(\cD)$, where $f$ is a Borel function on $\R$.
\end{remark}
In view of the previous remarks and using again the Spectral theorem, the action functional \eqref{action} can be rewritten as follows
\be\label{rewritten}
\cL(\psi)=\frac{1}{2}(\Vert\psi^{+}\Vert^{2}-\Vert\psi^{-}\Vert^{2})-\f{\omega}{2}\int_{\cG}\vert\psi\vert^{2}-\frac{1}{p}\int_{\cK}\vert\psi\vert^{p}\dx,
\ee
which is the best form in order to prove that $\cL$ has in fact a linking geometry (see e.g. \cite[Section II.8]{S}).

\begin{lemma}\label{testlinking}
For every $N\in\mathbb{N}$ there exist $R=R(N,p)>0$ and an $N$-dimensional space $Z_{N}\subset Y^{+}$ such that
\be
\label{negative}
\cL(\psi)\leq 0,\qquad \forall\psi\in\partial\cM_N,
\ee
where
\[ 
\partial \cM_{N}=\left\{\psi\in\cM_N : \Vert\psi^{-}\Vert=R \quad \text{\emph{or}}\quad \Vert\psi^{+}\Vert=R \right\}.
\]
and
\be
\label{eq-emmeenne}
\cM_{N}:=\left\{\psi\in Y : \Vert\psi^{-}\Vert\leq R \quad \text{\emph{and}}\quad \psi^{+}\in Z_{N} \quad \text{\emph{with}}\quad\Vert\psi^{+}\Vert\leq R \right\}.
\ee
\end{lemma}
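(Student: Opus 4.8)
The plan is to exhibit a concrete finite-dimensional subspace $Z_N\subset Y^+$ and estimate $\cL$ from above on the boundary $\partial\cM_N$, using the rewritten form \eqref{rewritten}. The guiding principle is the classical linking geometry for strongly indefinite functionals (see \cite[Section II.8]{S}): on the ``negative directions'' $Y^-$ the quadratic part $-\tfrac12\|\psi^-\|^2$ already drives $\cL$ to $-\infty$, while on the ``positive directions'' $Z_N$ one needs the nonlinear term $-\tfrac1p\int_\cK|\psi|^p$ to beat the positive quadratic part $\tfrac12\|\psi^+\|^2$; this last point is exactly where the localization on $\cK$ forces some care, since $\|\cdot\|_{L^p(\cK)}$ is only a seminorm on $Y$.

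First I would choose $Z_N$. The natural choice is an $N$-dimensional subspace of $Y^+$ consisting of spinors supported (at least with nonnegligible $L^p$-mass) on the compact core, so that on $Z_N$ the quantity $\|\psi\|_{L^p(\cK)}$ is genuinely a norm. Since $Z_N$ is finite-dimensional, all norms on it are equivalent; in particular there is a constant $c_N>0$ with $\|\psi\|_{L^p(\cK)}\geq c_N\|\psi\|$ for all $\psi\in Z_N$. One must check such a $Z_N$ exists inside $Y^+$: this follows because $Y^+$ is infinite-dimensional (the positive spectral subspace of $\cD$), spinors compactly supported in the interior of edges of $\cK$ are dense enough, and projecting finitely many such spinors onto $Y^+$ changes their $L^p(\cK)$-mass by an arbitrarily small amount if one works with spinors whose spectral support is concentrated at high frequency — alternatively one simply notes $P^+$ is injective on a generic $N$-dimensional subspace and that the $L^p(\cK)$-seminorm does not vanish on $P^+$ of such functions. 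I would phrase this as: pick $\widetilde Z_N\subset Y$ an $N$-dimensional space on which $\|\cdot\|_{L^p(\cK)}$ is a norm and on which $P^+$ is injective with $\|P^-\psi\|\leq \tfrac12\|P^+\psi\|$ (possible by taking the $\widetilde Z_N$ spanned by high-frequency positive modes), and set $Z_N:=P^+\widetilde Z_N$.

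Next comes the estimate. Take $\psi\in\cM_N$, so $\psi=\psi^-+\psi^+$ with $\psi^+\in Z_N$, $\|\psi^-\|\leq R$, $\|\psi^+\|\leq R$. From \eqref{rewritten} and $\omega\in(-m,m)$ (so that $\tfrac\omega2\int_\cG|\psi|^2$ is controlled by $\tfrac{|\omega|}{2m}\|\psi\|^2$, with $|\omega|/m<1$, using $\|\cdot\|^2=\||\cD|^{1/2}\psi\|_{L^2}^2\geq m\|\psi\|_{L^2}^2$), one gets
\[
 \cL(\psi)\leq \frac{1}{2}\Bigl(1+\frac{|\omega|}{m}\Bigr)\|\psi^+\|^2-\frac{1}{2}\Bigl(1-\frac{|\omega|}{m}\Bigr)\|\psi^-\|^2-\frac{1}{p}\|\psi\|_{L^p(\cK)}^p.
\]
On the piece of $\partial\cM_N$ where $\|\psi^-\|=R$: drop the (negative) $L^p$ term, bound $\|\psi^+\|\leq R$, and the right-hand side is $\leq \tfrac12(1+\tfrac{|\omega|}{m})R^2-\tfrac12(1-\tfrac{|\omega|}{m})R^2=\tfrac{|\omega|}{m}R^2$ — which is \emph{not} negative, so this crude bound is insufficient and I must instead use the $L^p(\cK)$ term together with a lower bound $\|\psi\|_{L^p(\cK)}\geq \|\psi^+\|_{L^p(\cK)}-\|\psi^-\|_{L^p(\cK)}\geq c_N\|\psi^+\|-C\|\psi^-\|$; this is the genuinely delicate point, and I'd handle it by exploiting the constraint $\|\psi^-\|\leq\tfrac12\|\psi^+\|$ that I built into $\widetilde Z_N$ so that $\|\psi\|_{L^p(\cK)}\geq \tfrac{c_N}{2}\|\psi^+\|$ whenever $\|\psi^+\|$ is comparable to $R$, making $-\tfrac1p\|\psi\|_{L^p(\cK)}^p$ of order $-R^p$. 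On the piece where $\|\psi^+\|=R$: here $\|\psi^+\|=R$ is large, $-\tfrac1p\|\psi\|_{L^p(\cK)}^p\leq -cR^p$ dominates the $+\tfrac12(1+\tfrac{|\omega|}{m})R^2$ term since $p>2$, and the $-\|\psi^-\|^2$ term only helps. In both cases, choosing $R=R(N,p)$ large enough (depending on $c_N$, $C$, $m$, $\omega$, $p$) gives $\cL(\psi)\leq 0$, and since $p>2$ such an $R$ exists. This proves \eqref{negative}.

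The main obstacle is precisely the interplay between the \emph{localized} nonlinearity and the spectral splitting: because $\int|\psi|^p$ is taken only over $\cK$, one cannot let an arbitrary element of $Y^+$ have large $L^p(\cK)$-mass, so the subspace $Z_N$ must be chosen with its $L^p(\cK)$-profile under control, and the bound $\|\psi^-\|\lesssim\|\psi^+\|$ on $\cM_N\cap\{\|\psi^+\|=R\}$ — needed to keep the negative-spectral part from spoiling the lower bound on $\|\psi\|_{L^p(\cK)}$ — has to be engineered into the definition of $\widetilde Z_N$ from the start (e.g.\ by placing $\widetilde Z_N$ in a high-frequency shell where $P^-$ acts with small norm). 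Everything else is a routine comparison of a quadratic term with a superquadratic one, using finite-dimensionality of $Z_N$ to convert the $L^p(\cK)$-seminorm into an equivalent norm.
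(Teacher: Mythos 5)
Your overall architecture (a finite-dimensional $Z_N$ obtained by projecting spinors concentrated on $\cK$, norm equivalence on $Z_N$, then a quadratic-versus-superquadratic comparison) is the right one and close in spirit to the paper's, but the step you yourself flag as ``the genuinely delicate point'' contains a real gap. The constraint $\Vert P^-\psi\Vert\leq\tfrac12\Vert P^+\psi\Vert$ that you build into the auxiliary space $\widetilde Z_N$ is a relation between the two spectral components of a \emph{single} element of $\widetilde Z_N$; it imposes nothing on the elements of $\cM_N$. By \eqref{eq-emmeenne} the negative component of $\psi\in\cM_N$ ranges over the \emph{entire} ball of radius $R$ in $Y^-$, completely decoupled from $\psi^+\in Z_N$, so it is never of the form $P^-\eta$ with $P^+\eta=\psi^+$. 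Consequently, on both pieces of $\partial\cM_N$ your lower bound $\Vert\psi\Vert_{L^p(\cK)}\geq c_N\Vert\psi^+\Vert-C\Vert\psi^-\Vert$ can be vacuous (the right-hand side is negative as soon as $\Vert\psi^-\Vert\geq (c_N/C)\Vert\psi^+\Vert$), and the possibility that $\psi^-$ nearly cancels $\psi^+$ on $\cK$ is left uncontrolled; this is exactly the difficulty created by the localization, since $Y^+$ and $Y^-$ are $L^2$-orthogonal on $\cG$ but not after restriction to $\cK$. On the piece $\Vert\psi^-\Vert=R$ the problem is compounded by your treatment of the $\omega$-term, which leaves a positive remainder of order $\tfrac{|\omega|}{m}R^2$ precisely when $\Vert\psi^+\Vert$ is comparable to $R$, i.e.\ when you need the nonlinear term most (the paper avoids this by working with the $\omega$-adapted splitting $P^\pm_\omega$ and norm $\Vert\cdot\Vert_\omega$, for which the quadratic part is exactly $\tfrac12(\Vert\psi^+\Vert_\omega^2-\Vert\psi^-\Vert_\omega^2)$).

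The paper closes the cancellation gap by a different device. It takes $\eta_1,\dots,\eta_N$ supported in a single bounded edge, with only the first component nonzero, so that \eqref{partepositiva} guarantees $P^+\eta_j\neq0$, and sets $Z_N=\operatorname{span}\{P^+\eta_1,\dots,P^+\eta_N\}$. Given $\psi=\varphi+\xi$ with $\varphi\in Y^-$ \emph{arbitrary} and $\xi=\sum_j\lambda_jP^+\eta_j$, it splits $\varphi=\varphi^\perp+\chi$ with $\chi:=\sum_j\lambda_jP^-\eta_j$. Since $\chi+\xi=\sum_j\lambda_j\eta_j$ is supported in $\cK$ and $\varphi^\perp$ is $L^2$-orthogonal to it, one obtains $\int_\cK|\varphi+\xi|^2\dx=\int_\cK|\varphi^\perp|^2\dx+\int_\cG|\chi+\xi|^2\dx\geq\Vert\xi\Vert_{L^2}^2\gtrsim R^2$ when $\Vert\xi\Vert=R$: a bound that no choice of $\varphi$ can defeat, because the only part of $\varphi$ capable of cancelling $\xi$ is precisely the one that recombines with $\xi$ into a compactly supported spinor. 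To repair your argument you would need an estimate of this type, namely a lower bound on $\inf_{\varphi\in Y^-}\Vert\varphi+\xi\Vert_{L^2(\cK)}$ for $\xi\in Z_N$, rather than a triangle inequality. Note also that the paper must separately handle the degenerate case in which these projections span only a finite-dimensional subspace of $Y^+$, which it does by passing to $\sigma_2 V$ using the anticommutation of $\sigma_2$ with $\cD$; your ``high-frequency positive modes'' construction leaves the analogous existence question unaddressed.
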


\begin{proof}
Let $e$ be a bounded edge, associated with the segment $I_e=[0,\ell_e]$, and let $V$ be the space of the spinors
\[
\eta=\begin{pmatrix}\eta^1\\[.2cm] 0\end{pmatrix},\qquad\text{with}\quad\eta^1\in C_0^\infty(I_e),
\]
which is clearly a subset of $\dom(\cD)$ and hence of $Y$. Moreover, a simple computation shows that 
\be
\label{partepositiva}
\int_{\cG}\langle\eta,\cD\eta\rangle\dx =m\int_{\cG}\vert\eta_{1}\vert^{2}\dx
\ee
and thus, in view of \eqref{rewritten}, if $\eta_1\neq0$ then $\eta^{+}\neq0$.
 
Assume first that $\mathrm{dim} V^+=\infty$, where $V^+=V\cap Y^+$. For every fixed $N\in\mathbb{N}$, choose $N$ linearly independent spinors $\eta^{+}_{1},...,\eta^{+}_{N}\in V^+$ and set $Z_{N}:= \operatorname{span}\{\eta^{+}_{1},...,\eta^{+}_{N}\}$. As a consequence, if $\psi\in\partial\cM_{N}$, then $\psi=\varphi+\xi$ with $\varphi\in Y^{-}$ and $\xi\in Z_{N}$, so that
\[
\cL(\psi)=\cL(\varphi+\xi)=\frac{1}{2}\left(\Vert\xi\Vert^{2}-\Vert\varphi\Vert^{2} \right)-\frac{1}{p}\int_{\cK}\vert\varphi+\xi \vert^{p}\dx.
\]
It is clear that, if $\Vert\varphi\Vert\geq\Vert\xi\Vert$, then
\[
\cL(\varphi+\xi)\leq -\int_{\cK}\vert\varphi+\xi\vert^{p}\dx\leq 0
\]
If, on the contrary, $\Vert\xi\Vert\geq\Vert\varphi\Vert$, then some further effort is required. Since $\psi\in\partial\cM_N$, $\Vert\xi\Vert=R$ and thus
\[
\cL(\varphi+\xi)\leq \frac{R^{2}}{2} - \frac{1}{p}\int_{\cK}\vert\varphi+\xi\vert^{p}\dx.
\]
From the H\"{o}lder inequality
\be\label{holder}
\int_{\cK}\vert\varphi+\xi\vert^{2}\leq \ell^{\frac{p-2}{p}}\left(\int_{\cK}\vert\varphi+\xi\vert^{p}\dx \right)^{\frac{2}{p}}
\ee 
(recall that $\ell=|\cK|$) and hence
\be\label{intermedia}
\cL(\varphi+\xi)\leq \frac{R^{2}}{2} - \frac{\ell^{\frac{p(2-p)}{4}}}{p}\left(\int_{\cK}\vert\varphi+\xi\vert^{2}\dx\right)^{\frac{p}{2}}.
\ee
Now, by definition, $\xi=\sum^{N}_{j=1}\lambda_{j}\eta^{+}_{j}$, for some $\lambda_{j}\in\C$. On the other hand, denoting by $\eta^{-}_{j}$ the spinors such that $\eta^{-}_{j}+\eta^{+}_{j}=:\eta_{j}\in V$, since $\varphi\in Y^{-}$, there results that $\varphi=\varphi^{\perp}+\chi$, with $\chi:=\sum^{N}_{j=1}\lambda_{j}\eta^{-}_{j}$ and $\varphi^{\perp}$ the orthogonal complement of $\chi$ in $Y^{-}$. Therefore, as $\varphi^{\perp}$ is orthogonal to $\chi$ and $\xi$ in $L^2(\cG,\C^2)$,
 \be\label{prima}
 \int_{\cG}\vert\varphi+\xi\vert^{2}\dx=\int_{\cG}\vert\varphi^{\perp}\vert^{2}\dx + \int_{\cG}\vert\xi+\chi\vert^{2}\dx;
 \ee
 while, as  $\xi+\chi=\sum^{N}_{j=1}\lambda_{j}\eta_{j}$ vanishes outside $I\subset\cK$,
 \be\label{seconda}
 \int_{\cG}\vert\varphi+\xi\vert^{2}\dx= \int_{\cG\setminus\cK}\vert\varphi+\xi\vert^{2}\dx+ \int_{\cK}\vert\varphi+\xi\vert^{2}\dx= \int_{\cG\setminus\cK}\vert\varphi^{\perp}\vert^{2}\dx+ \int_{\cK}\vert\varphi+\xi\vert^{2}\dx.
 \ee
 Combining \eqref{prima} and \eqref{seconda} we get
\[
\int_{\cK}\vert \varphi+\xi\vert^{2}\dx=  \int_{\cK}\vert\varphi^{\perp}\vert^{2}\dx+ \int_{\cG}\vert\chi+\xi\vert^{2}\dx
\]
and, plugging into \eqref{intermedia},
\be
\label{eq-quasifinal}
\cL(\varphi+\xi)\leq \frac{R^{2}}{2} - \frac{\ell^{\frac{p(2-p)}{4}}}{p}\left( \int_{\cK}\vert\varphi^{\perp}\vert^{2}\dx+ \int_{\cG}\vert\chi+\xi\vert^{2}\dx\right)^{\frac{p}{2}}\leq\frac{R^{2}}{2} - \frac{\ell^{\frac{p(2-p)}{4}}}{p}\left( \int_{\cG}\vert\chi+\xi\vert^{2}\dx\right)^{\frac{p}{2}}.
\ee
Then, since $\chi$ and $\xi$ are orthogonal by construction and $\chi+\xi$ belongs to a finite dimensional space (so that its $L^{2}$-norm is equivalent to the $Y$-norm), there exists $C>0$ such that 
\[
\cL(\varphi+\xi)\leq \frac{R^{2}}{2}- C\left(\Vert\chi\Vert^{2}+\Vert\xi\Vert^{2} \right)^{\frac{p}{2}}\leq \frac{R^{2}}{2}- C\Vert\xi\Vert^{p} =\frac{R^{2}}{2}- C R^{p}
\]
and thus, for $R$ large, the claim is proved.

Finally, consider the case $\mathrm{dim} V^+<\infty$. As $\mathrm{dim} V=\infty$, we have $\mathrm{dim} V^-=\infty$. On the other hand, there holds $\sigma_2V^-\subset Y^+$ and that $\sigma_2V^+\subset Y^-$, where $\sigma_2$ is the Pauli matrix
\[
 \sigma_{2}=\begin{pmatrix}0&-i\\[.2cm] i&0 \end{pmatrix},
\]
as this matrix anticommutes with the Dirac operator. Therefore (also recalling that $\sigma_2$ in unitary), if one defines $\widetilde{V}=\sigma_2 V$, which consists of spinors of the form
\[
\eta=\begin{pmatrix}0\\[.2cm] \eta^2\end{pmatrix},\qquad\text{with}\quad\eta^2\in C_0^\infty(I_e),
\]
so that $\widetilde{V}^+=\sigma_2V^-$ and $\widetilde{V}^-=\sigma_2V^+$, then (arguing as before) one can prove again \eqref{negative}.
\end{proof}

\begin{lemma}
\label{positivesphere}
There exist $r,\rho>0$ such that 
\[
\inf_{S^{+}_{r}}\cL\geq\rho>0,
\]
where
\begin{equation}
 \label{eq-sfera}
 S^{+}_{r}:=\left\{\psi\in Y^{+}: \Vert\psi\Vert=r \right\}.
\end{equation}
\end{lemma}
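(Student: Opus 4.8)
The plan is to estimate $\cL$ from below on $Y^{+}$ directly, using the rewritten expression \eqref{rewritten}, the spectral gap of $\cD$, and the Sobolev embedding \eqref{sobolev}, and then to exploit $p>2$ so that the quadratic term dominates for a sufficiently small radius $r$.

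First I would note that for $\psi\in Y^{+}$ one has $\psi^{-}=0$, so that \eqref{rewritten} (recall $c=1$ here) reads
$$
\cL(\psi)=\f{1}{2}\Vert\psi\Vert^{2}-\f{\omega}{2}\int_{\cG}\vert\psi\vert^{2}\dx-\f{1}{p}\int_{\cK}\vert\psi\vert^{p}\dx .
$$
Since the spectrum of $\cD$ misses $(-m,m)$ by \eqref{eq-spectrum2}, Borel functional calculus gives $\vert\cD\vert\geq m$ as operators, hence $\Vert\psi\Vert^{2}=\Vert\sqrt{\vert\cD\vert}\psi\Vert_{L^{2}}^{2}=\langle\vert\cD\vert\psi,\psi\rangle\geq m\Vert\psi\Vert_{L^{2}}^{2}$, i.e. $\int_{\cG}\vert\psi\vert^{2}\dx\leq\tfrac1m\Vert\psi\Vert^{2}$. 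Plugging this in and using $\omega\in(-m,m)$,
$$
\cL(\psi)\geq\f{1}{2}\Big(1-\f{\vert\omega\vert}{m}\Big)\Vert\psi\Vert^{2}-\f{1}{p}\int_{\cK}\vert\psi\vert^{p}\dx=:\f{\kappa}{2}\Vert\psi\Vert^{2}-\f{1}{p}\int_{\cK}\vert\psi\vert^{p}\dx,
$$
with $\kappa:=1-\vert\omega\vert/m>0$. This is the step where the hypothesis $\omega\in(-mc^{2},mc^{2})$ is essential: it is exactly what makes the full quadratic part of $\cL$ positive definite on $Y^{+}$.

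Next I would control the nonlinear term via the embedding \eqref{sobolev}, which is continuous with respect to the equivalent norm $\Vert\cdot\Vert$ on $Y$: there is $C>0$ with $\int_{\cK}\vert\psi\vert^{p}\dx\leq\Vert\psi\Vert_{L^{p}(\cG,\C^{2})}^{p}\leq C\Vert\psi\Vert^{p}$. Hence
$$
\cL(\psi)\geq\Vert\psi\Vert^{2}\Big(\f{\kappa}{2}-\f{C}{p}\Vert\psi\Vert^{p-2}\Big),\qquad\forall\psi\in Y^{+}.
$$
Since $p>2$, choosing $r>0$ so small that $\tfrac{C}{p}r^{p-2}\leq\tfrac{\kappa}{4}$ yields $\cL(\psi)\geq\tfrac{\kappa}{4}r^{2}=:\rho>0$ for every $\psi\in S^{+}_{r}$, which proves the claim.

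The argument is essentially a single estimate, so there is no genuinely hard step; the only points requiring care are the correct use of the norm $\Vert\cdot\Vert=\Vert\sqrt{\vert\cD\vert}\,\cdot\,\Vert_{L^{2}}$ together with the spectral gap to absorb the $L^{2}$-term (needing $\vert\omega\vert<m$), and the fact that the constant in $Y\hookrightarrow L^{p}$ must be measured with respect to $\Vert\cdot\Vert$ rather than the ambient $H^{1/2}$-norm. If any subtlety is worth flagging, it is purely the bookkeeping of these constants.
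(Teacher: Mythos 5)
Your argument is correct and is precisely the standard estimate that the paper leaves implicit: its proof of Lemma \ref{positivesphere} simply declares the claim an ``immediate consequence'' of the definition of $\cL$, of $p>2$ and of $\omega\in(-m,m)$, and the details you supply (the spectral bound $\vert\cD\vert\geq m$ to absorb the $L^{2}$-term, the embedding $Y\hookrightarrow L^{p}(\cG,\C^{2})$ for the nonlinear term, and smallness of $r$ via $p>2$) are exactly the intended ones. No gaps; your remark about measuring the embedding constant in the equivalent norm $\Vert\cdot\Vert$ is the right bookkeeping point.
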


\begin{proof}
 The proof is an immediate consequence of the definition of $\cL$ given in \eqref{action}, in view of the fact that $p>2$ and $\omega\in(-m,m)$.
\end{proof}

Finally, we introduce a further representation of the functional $\cL$, which will be useful in the sequel. Preliminarily, note that as
the spectrum of the (self-adjoint) operator $(\cD-\omega)$ is given by 
\[
\sigma(\cD-\omega)=(-\infty,-m-\omega]\cup[m-\omega,+\infty)
\]
(and as $|\omega|<m$), one can define an equivalent norm
\[
\Vert\psi\Vert_{\omega}:=\Vert\sqrt{\vert\cD-\omega\vert}\psi\Vert_{L^{2}},\qquad \forall\psi\in Y,
\]
and the two spectral projectors $P^{\pm}_{\omega}$ on the positive and negative (respectively) spectral subspaces of $(\cD-\omega)$. As a consequence,
\be
\label{eq-projomega}
\psi=P^{+}_{\omega}\psi+P^{-}_{\omega}\psi,\qquad\forall\psi\in Y
\ee
and \eqref{rewritten} can be written as
\[
\cL(\psi)=\frac{1}{2}(\Vert\psi^{+}\Vert_{\omega}^{2}-\Vert\psi^{-}\Vert_{\omega}^{2})-\frac{1}{p}\int_{\cK}\vert\psi\vert^{p}\dx.
\]


\subsection{Existence and multiplicity of the bound states}

The aim of this section is to prove, for $p>2$, the existence of infinitely many (pairs of) bound states of the NLDE for any frequency $\omega\in(-m,m)$. The techniques used below (such as, for instance, Krasnoselskij genus or pseudo-gradient flows) are well-known in the literature in their abstract setting and can be found for instance in \cite{R-CBMS,S} (see also \cite{ES-CMP} for an application to nonlinear Dirac equations).

\medskip
Recall the definition of Krasnoselskij genus for the subsets of $Y$.
\begin{defi}
Let $\mathcal{A}$ be the family of sets $A\subset Y\backslash\{0\}$ such that $A$ is closed and symmetric (namely, $\psi\in A \Rightarrow -\psi\in A$). For every $A\in\mathcal{A}$, the {\em genus} of $A$ is the natural number defined by
\[
\gamma[A]:=\min\{n\in\mathbb{N}:\exists\varphi:A\rightarrow\R^n\backslash\{0\},\, \varphi\mbox{ continuous and odd}\}.
\]
If no such $\varphi$ exists, then one sets $\gamma[A] = \infty$.
\end{defi}

In addition, one easily sees that the action functional $\cL$ is even, i.e. 
\[
\cL(-\psi)=\cL(\psi),\qquad\forall\psi\in Y.
\]
As a consequence, it is well known (see, e.g., \cite[Appendix]{R-CBMS}) that there exists an odd pseudo-gradient flow $\left( h_{t}\right)_{t\in\R}$ associated with the functional $\cL$, which satisfies some useful properties. This construction is based on well-known arguments and, thus, here we only present an outline of the proof, refering the reader to \cite[Appendix]{R-CBMS} and \cite[Chapter II]{S} for details.

Since the interaction term is concentrated on a compact set $\cK\subset\cG$, the compactness of Sobolev embeddings imply that $h_{t}$ can be chosen of the following form 
 \[
 h_{t}=\Lambda_{t}+K_{t}:[0,\infty)\times Y\longrightarrow Y,
 \]
where $\Lambda_{t}$ is an isomorphism and $K_{t}$ is a compact map, for all $t\geq0$. Moreover, one can also prove that 
\[
\Lambda_{t}:Y^{-}\oplus Y^{+}\longrightarrow Y^{-}\oplus Y^{+}, \qquad\forall t\in\R,
\]
that is, $Y^{\pm}$ are invariant under the action of $\Lambda_{t}$ for all $t\geq0$.

Fix, then, $\varepsilon>0$ such that $\rho-\varepsilon>0$ (with $\rho$ given by Lemma \ref{positivesphere}). Exploiting suitable cut-off functions on the pseudogradient vector field, one can get that, for all $t\geq 0$,
\be\label{cutoffedflow}
h_{t}(\psi)=\psi,\qquad \forall\psi\in \{\varphi\in Y : \cL(\varphi)<\rho-\varepsilon \},
\ee
namely, the level sets of the action below $\rho-\varepsilon$ are not modified by the flow.

In view of these remarks, we can state the following lemma.

\begin{lemma}
\label{genusintersezione}
Let $r,\rho>0$ be as in Lemma \ref{positivesphere}. Then, for every $N\in\mathbb{N}$, there results 
\[
\gamma[h_{t}(S^{+}_{r})\cap\cM_{N}]\geq N, \qquad\forall t\geq0,
\] 
with $S^{+}_{r}$ and $\cM_{N}$ defined by \eqref{eq-sfera} and \eqref{eq-emmeenne}, respectively.
\end{lemma}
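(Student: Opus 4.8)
The plan is to prove this via a topological intersection argument, combining the linking geometry established in Lemma~\ref{testlinking} with a degree-theoretic (or genus) argument. The key point is that the deformed sphere $h_t(S^+_r)$ and the manifold-with-boundary $\cM_N$ must always intersect in a set of genus at least $N$, and this is a robust topological fact that survives the deformation.

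\textbf{Step 1: A linking-type intersection lemma.} First I would record the classical fact (see \cite[Chapter~II]{S}) that if $Z_N\subset Y^+$ has dimension $N$, then for any continuous map $g:S^+_r\to Y$ of the form $g=\mathrm{id}+(\text{compact})$ that equals the identity on the part of $S^+_r$ where $\cL<\rho-\eps$, the image $g(S^+_r)$ must intersect $\cM_N$ in a set that cannot be ``unlinked''. The mechanism: consider the map that sends $\psi\in h_t(S^+_r)$ to the pair $(P^-_\omega\psi,\,P_{Z_N}\psi^+)$ where $P_{Z_N}$ is the projection onto $Z_N$. One wants to solve, for a point to lie in $\cM_N$, the conditions $\|P^-\psi\|\le R$, $\psi^+\in Z_N$, $\|\psi^+\|\le R$. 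The genus bound comes from applying a Borsuk--Ulam-type argument to the odd map obtained by composing the (odd, since $h_t$ is odd and $S^+_r$ is symmetric) flow with the finite-dimensional projections, and using that $\cL\le 0$ on $\partial\cM_N$ while $\cL\ge\rho$ on $S^+_r$, hence (by \eqref{cutoffedflow}) $h_t$ acts as the identity near the ``bad'' boundary where the intersection could escape.

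\textbf{Step 2: Setting up the odd map and invoking genus monotonicity.} Concretely I would fix $t\ge0$ and argue by contradiction: if $\gamma[h_t(S^+_r)\cap\cM_N]<N$, then there is a continuous odd map $\phi$ from this intersection into $\R^{N-1}\setminus\{0\}$. Combining $\phi$ with the ingredients above (the projection onto $Z_N\cong\R^N$ and the norm function $\psi\mapsto\|P^-_\omega\psi\|-R$) one builds an odd continuous map from an appropriate sphere $S^N$ (the boundary of a ball in $Z_N$, or $S^{+}_r$ itself after identification) into $\R^N\setminus\{0\}$, contradicting Borsuk--Ulam. The properties needed for this to go through are exactly: (i) $h_t$ is odd, so the intersection is a symmetric set and genus is defined; (ii) $h_t=\mathrm{id}$ on $\{\cL<\rho-\eps\}$ by \eqref{cutoffedflow}, which pins the flow near $\partial\cM_N$ since there $\cL\le0<\rho-\eps$; (iii) $S^+_r\cap\cM_N\neq\emptyset$ with the ``right'' genus at $t=0$, which follows because on $S^+_r$ one has $\psi=\psi^+$ and the sphere $\{\psi\in Z_N:\|\psi\|=r\}$ sits inside both $S^+_r$ and $\cM_N$ (provided $r\le R$), giving genus exactly $N$ at $t=0$; (iv) a continuation/invariance property showing the genus of the intersection does not drop along the flow, which is where the deformation-invariance of genus combined with \eqref{cutoffedflow} enters.

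\textbf{Main obstacle.} The delicate point is (iv), i.e.\ ruling out that the intersection $h_t(S^+_r)\cap\cM_N$ shrinks in genus as $t$ increases. This is not a pure genus-monotonicity statement (genus is monotone under odd maps, not automatically under flows), so one must exploit the specific structure: because $\cL$ is nonincreasing along the pseudo-gradient flow and $\cL\le0$ on $\partial\cM_N$ while $\cL\ge\rho$ on $S^+_r$, any point of $S^+_r$ whose orbit up to time $t$ stays above level $\rho-\eps$ is moved by $h_t$, and for such points the intersection with $\cM_N$ is controlled by the identity-like behavior near the boundary; points that drop below $\rho-\eps$ are frozen by \eqref{cutoffedflow}. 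Disentangling these two regimes to produce, for every $t$, a sphere of dimension $N-1$ on which a hypothetical odd map into $\R^{N-1}\setminus\{0\}$ would yield the Borsuk--Ulam contradiction is the technical heart of the argument; the rest is bookkeeping of projections and norms. I would handle it by following the template of \cite[Chapter~II, Section~8]{S} and \cite{ES-CMP}, adapting it to the fact that here the quadratic form $\|\psi^+\|^2-\|\psi^-\|^2$ plus the $\omega$-term is already diagonalized by the $P^\pm_\omega$ decomposition, which makes the linking sets clean.
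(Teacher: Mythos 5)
Your proposal assembles the right ingredients (oddness of $h_t$, the cut-off property \eqref{cutoffedflow}, the opposite signs of $\cL$ on $S^+_r$ and on $\partial\cM_N$, a Borsuk--Ulam count at $t=0$), but it leaves unresolved exactly the step that constitutes the lemma, namely the case $t>0$. You flag this yourself as the ``main obstacle'' and propose to close it by ``deformation-invariance of genus combined with \eqref{cutoffedflow}'', deferring to the template of \cite[Section II.8]{S}. That is not enough: genus is invariant under odd homeomorphisms, but $h_t(S^+_r)\cap\cM_N$ is \emph{not} the image under $h_t$ of $S^+_r\cap\cM_N$, because $\cM_N$ is not invariant under the flow, so no invariance statement applies directly. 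As written, your argument yields the genus bound at $t=0$ and, for all $t$, the nonemptiness of the intersection and its disjointness from $\partial\cM_N$ --- but not the bound $\gamma[h_t(S^+_r)\cap\cM_N]\geq N$ for $t>0$, which is the actual content of the statement.

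The paper closes this gap using two structural facts about the flow that your proposal never invokes. First, the group property allows a pullback: $h_t(S^+_r)\cap\cM_N=h_t\bigl(S^+_r\cap h_{-t}(\cM_N)\bigr)$, and since $h_t$ is an odd homeomorphism it suffices to bound $\gamma\bigl[S^+_r\cap h_{-t}(\cM_N)\bigr]$ from below. Second, the pseudo-gradient flow is chosen of the form $h_s=\Lambda_s+K_s$ with $K_s$ compact, $K_s(0)=0$, and $\Lambda_s$ a linear isomorphism preserving $Y^{\pm}$; hence $Z_N':=\Lambda_{-t}(Z_N)$ is again an $N$-dimensional subspace of $Y^{+}$, and (after replacing $\cM_N$ by $Y^-\oplus Z_N$ via \eqref{cutoffedflow} and Lemma \ref{testlinking}) the pulled-back set contains $S^+_r\cap(Y^-\oplus Z_N')=S^+_r\cap Z_N'$, a sphere in an $N$-dimensional space. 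Monotonicity of the genus under inclusion and under the odd homeomorphism $h_t$ then gives the claim. If you insist on the Borsuk--Ulam route for $t>0$, you would in any case need this $\Lambda_s+K_s$ splitting (or an equivalent finite-dimensional reduction exploiting the compactness of $K_s$) to bring the deformed sphere back into a setting where degree theory applies; without it, your plan does not go through.
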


\begin{proof}
For each fixed $\psi\in Y$, the function $t\mapsto\cL\circ h_{t}(\psi)$ is increasing. Then Lemma \ref{testlinking} implies that 
\[
h_{t}(S^{+}_{r})\cap\partial\cM_{N}=\emptyset,\qquad \forall t\geq0.
\]
Note, also, that by the group property of the pseudogradient flow 
\[
\left( h_{t}\right)^{-1}=h_{-t}=\Lambda_{-t}+K_{-t},
\]
so that
\[
h_{t}(S^{+}_{r})\cap\cM_{N}=h_{t}\left( S^{+}_{r}\cap h_{-t}\left(\cM_{N} \right)\right).
\]
Then, from \eqref{cutoffedflow}, a degree-theory argument (see e.g. \cite[Section II.8]{S}) shows that 
\[
 S^{+}_{r}\cap h_{-t}\left(\cM_{N} \right)\neq\emptyset.
\]

On the other hand, by \eqref{cutoffedflow} and Lemma \ref{testlinking}, it is easy to see that 
\[
h_{t}(S^{+}_{r})\cap\cM_{N}=h_{t}(S^{+}_{r})\cap (Y^-\oplus Z_{N}),
\]
and thus
\be\label{inverseL}
h_{t}(S^{+}_{r})\cap\cM_{N}=h_{t}\left(S_r^+\cap\big(Y^-\oplus Z_N'+ K_{-t}(Y^-\oplus Z_N)\big) \right),
\ee
where $Z'_{N}:=\Lambda_{-t}(Z_{N})$ is a $N$-dimensional subspace of $Y^{+}$ and where we used the fact that $\Lambda_{s}$ is an isomorphism for all $s\in\R$ and preserves $Y^{\pm}$. Now, since $h_t(0)=0$ and $\Lambda_t(0)=0$, we have $K_t(0)=0$. As a consequence
\[
 Y^-\oplus Z_N'\subset \left(Y^-\oplus Z_N'\right) + K_{-t}(Y^-\oplus Z_N),
\]
and hence, exploiting \eqref{inverseL} and the monotonicity of the genus,
\[
\gamma\left[ h_{t}(S^{+}_{r})\cap\cM_{N}\right]\geq\gamma\left[S^{+}_{r}\cap (Y^-\oplus Z'_{N})\right]\geq\gamma\left[S^{+}_{r}\cap Z'_{N}\right]=N,
\]
as $S^{+}_{r}\cap Z'_{N}\simeq\mathbb{S}^{N}$, namely, is homeomorphic to a $N$-dimensional sphere.
\end{proof}

Using Lemma \ref{genusintersezione} we can prove the existence of the Palais-Smale sequences at the min-max levels.

\begin{cor}
\label{genusminmax}
Let the assumptions of Lemma \ref{genusintersezione} be satisfied and define, for any $N\in\mathbb{N}$,
\be\label{minmax}
\alpha_{N}:=\inf_{X\in\cF_{N}}\sup_{\psi\in X}\cL(\psi),
\ee
with
\be\label{family}
\cF_{N}:=\left\{X\in\mathcal{A}:\gamma[h_{t}(S^{+}_{r})\cap X]\geq N,\,\forall t\geq0 \right\}.
\ee
Then, for every $N\in\mathbb{N}$, there exists a Palais-Smale sequence $\left( \psi_{n}\right)\subset Y$ at level $\alpha_{N}$, i.e. 
\[
 \begin{cases}
\displaystyle \cL(\psi_{n})\longrightarrow \alpha_{N}  \\[.2cm]
\displaystyle d\cL(\psi_{n})\xrightarrow{Y^{*}} 0,
\end{cases}
\qquad\mbox{as}\quad n\longrightarrow\infty.
\]
In addition, there results
\begin{gather}
\label{eq-levelorder}
\alpha_{N_{1}}\leq\alpha_{N_{2}}, \qquad \forall N_{1}<N_{2},\\[.2cm]
0<\rho\leq\alpha_{N}\leq\sup_{\cM_{N}}\cL<+\infty,\qquad\forall N\in\mathbb{N}.\nonumber
\end{gather}
\end{cor}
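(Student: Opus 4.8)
The plan is to combine Lemma~\ref{genusintersezione} with the standard minimax/deformation machinery for even functionals possessing a linking geometry, following the scheme in \cite[Appendix]{R-CBMS} and \cite[Chapter~II]{S}. First I would observe that the family $\cF_N$ is nonempty: taking $X=\cM_N$ (which is closed, but not symmetric; so instead one takes $X=\cM_N\cup(-\cM_N)$, or more simply uses that $h_0=\mathrm{id}$ and notes $\gamma[S^+_r\cap\cM_N]\ge N$ is already guaranteed by Lemma~\ref{genusintersezione} applied at $t=0$, so any admissible symmetric set containing $h_t(S^+_r)\cap\cM_N$ for all $t$ works). The key monotonicity $\gamma[h_t(S^+_r)\cap X]\ge N$ is stable under the flow because $h_s\circ h_t=h_{s+t}$, which shows $\cF_N$ is invariant under each $h_s$; this invariance is exactly what is needed to run the deformation argument. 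Since $\cL$ is even and, by \eqref{cutoffedflow}, the flow fixes sublevel sets below $\rho-\varepsilon$, each $\alpha_N$ is well-defined and the ordering \eqref{eq-levelorder} follows immediately from the inclusion $\cF_{N_2}\subset\cF_{N_1}$ when $N_1<N_2$ (a larger genus requirement is more restrictive).

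Next I would establish the two-sided bounds. For the lower bound: by Lemma~\ref{genusintersezione}, for any $X\in\cF_N$ we have $h_t(S^+_r)\cap X\ne\emptyset$ for all $t\ge0$ (genus $\ge N\ge1$ forces nonemptiness); in particular $S^+_r\cap X\ne\emptyset$ at $t=0$, so $\sup_{\psi\in X}\cL(\psi)\ge\inf_{S^+_r}\cL\ge\rho$ by Lemma~\ref{positivesphere}, whence $\alpha_N\ge\rho>0$. For the upper bound: using $\cM_N$ (symmetrized) as a competitor in the infimum gives $\alpha_N\le\sup_{\cM_N}\cL$, and this supremum is finite because on $\cM_N$ the positive part $\psi^+$ ranges over a ball in the finite-dimensional space $Z_N$ (so $\|\psi^+\|\le R$ controls $\int_\cG|\psi^+|^2$ and the $L^p$-norm on $\cK$ of $\psi^+$), while the term $-\tfrac12\|\psi^-\|^2$ and $-\tfrac1p\int_\cK|\psi|^p\le0$ only push $\cL$ down; hence $\cL$ is bounded above on $\cM_N$ by something like $\tfrac{R^2}{2}$ plus a constant depending on $N$.

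Finally, the core point — existence of a Palais--Smale sequence at level $\alpha_N$ — is proved by contradiction via the deformation lemma. Suppose no $(PS)_{\alpha_N}$ sequence exists; then there are $\delta>0$ and $\eta>0$ such that $\|d\cL(\psi)\|_{Y^*}\ge\eta$ whenever $|\cL(\psi)-\alpha_N|\le\delta$. Using the odd pseudo-gradient flow $(h_t)$ (with the cut-offs already built in so that \eqref{cutoffedflow} holds, and with $\alpha_N-\rho+\varepsilon>0$ ensuring the flow genuinely moves near level $\alpha_N$), for $t$ large enough $\cL\circ h_t$ pushes the sublevel set $\{\cL\le\alpha_N+\delta\}$ into $\{\cL\le\alpha_N-\delta\}$ on the relevant region. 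Pick $X\in\cF_N$ with $\sup_X\cL<\alpha_N+\delta$; then $h_t(X)\in\cF_N$ by the flow-invariance of the family (here is where one uses that $h_t$ preserves $\cF_N$, which in turn rests on the group property $h_s\circ h_t=h_{s+t}$ and Lemma~\ref{genusintersezione}), yet $\sup_{h_t(X)}\cL<\alpha_N-\delta<\alpha_N$, contradicting the definition of $\alpha_N$ as an infimum. This yields the desired $(PS)_{\alpha_N}$ sequence and completes the proof.

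\medskip
\emph{Main obstacle.} The delicate point is verifying that the minimax family $\cF_N$ is genuinely invariant under the pseudo-gradient flow, i.e.\ that $X\in\cF_N\Rightarrow h_s(X)\in\cF_N$; this requires the group identity $h_s\circ h_t=h_{s+t}$ together with Lemma~\ref{genusintersezione} (to re-derive $\gamma[h_t(S^+_r)\cap h_s(X)]\ge N$), and it is the linchpin that makes the deformation argument go through. A secondary technical nuisance is the correct symmetrization of the competitor sets so that they lie in $\cA$ (closed \emph{and} symmetric), and ensuring the cut-offs defining $h_t$ are compatible with both the finite-dimensional constraint $\psi^+\in Z_N$ and the splitting $\Lambda_t+K_t$ with $K_t$ compact, so that the standard finite-dimensional degree argument behind Lemma~\ref{genusintersezione} remains available along the deformation.
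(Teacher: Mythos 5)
Your proposal is correct and follows essentially the same route as the paper: nonemptiness of $\cF_N$ via Lemma \ref{genusintersezione}, the bounds $0<\rho\leq\alpha_N\leq\sup_{\cM_N}\cL<\infty$ from Lemmas \ref{positivesphere} and \ref{testlinking} with $\cM_N$ as competitor, and a contradiction argument combining the deformation flow, the group property $h_s\circ h_t=h_{s+t}$ and the monotonicity of the genus. The only points to tidy are that $\cM_N$ is already symmetric (so no symmetrization is needed), and that since $t\mapsto\cL\circ h_t$ is \emph{increasing} in the paper's convention the deformation must be $h_{-T}$ with $T>0$ --- which is exactly the direction in which $\cF_N$ is flow-invariant, because $\gamma\left[h_t(S^+_r)\cap h_{-T}(X)\right]=\gamma\left[h_{-T}\left(h_{t+T}(S^+_r)\cap X\right)\right]\geq\gamma\left[h_{t+T}(S^+_r)\cap X\right]\geq N$ uses the hypothesis only at the nonnegative time $t+T$, whereas the invariance under $h_s$ for $s>0$ that you assert is neither available from the definition of $\cF_N$ nor needed.
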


\begin{proof}
 The existence of a Palais-Smale sequence for $\cL$ at level $\alpha_{N}$ follows by standard deformation arguments, and then we only sketch the proof (see \cite{R-CBMS,S} for details).
 
 Preliminarily, we note that by Lemma \ref{genusintersezione} (and the definition of $\cM_N$) the classes $\cF_{N}$ are not empty, and hence that the levels $\alpha_N$ are well defined.
 
Now, suppose, by contradiction, that there is no Palais-Smale sequence at level $\alpha_{N}$. Then, since $\cL\in C^{1}$, there exist $\delta,\eps>0$ such that 
\be\label{nonzerogradient}
\Vert d\cL(\psi)\Vert\geq\delta,\qquad\forall\psi\in\{\alpha_{N}-2\eps<\cL<\alpha_{N}+2\eps\}.
\ee 
In addition, from \eqref{minmax} there exists $X_{\eps}\in\mathcal{F}_{N}$ such that 
\[
\sup_{\psi\in X_{\eps}}\cL(\psi)<\alpha_{N}+\eps,
\]
and hence, combining with \eqref{nonzerogradient}, we can see that there exists $T>0$ such that 
\[
\cL\left(h_{-T}(X_{\eps})\right)\subseteq\{\cL<\alpha_{N}-\eps\}.
\]
As a consequence, if one shows that $h_{-T}(X_{\eps})\in\mathcal{F}_{N}$, then obtains a contradiction.

First, observe that $h_{-T}(X_{\eps})\in\mathcal{A}$ as $h_{s}$ is odd, so that it suffices to prove that $$\gamma\left[h_{t}\left( S^{+}_{r}\right)\cap h_{-T}\left(X_{\eps}\right) \right]\geq N.$$ On the other hand, 
\[
h_{t}\left( S^{+}_{r}\right)\cap h_{-T}\left(X_{\eps}\right)=h_{-T}\left(h_{t+T}\left(S^{+}_{r}\right)\cap X_{\eps} \right),
\]
and then the monotonicity of the genus gives
\[
\gamma\left[h_{t}\left( S^{+}_{r}\right)\cap h_{-T}\left(X_{\eps}\right) \right]\geq \gamma\left[h_{t+T}\left( S^{+}_{r}\right)\cap X_{\eps} \right]\geq N.
\]
Therefore, $h_{-T}(X_{\eps})\in\mathcal{F}_{N}$ and this entails that
\[
\alpha_{N}\leq\sup_{\psi\in h_{-T}\left(X_{\eps}\right)}\cL(\psi)<\alpha_{N}-\eps,
\]
which is a contradiction.

Finally, the first line of \eqref{eq-levelorder} follows again by monotonicity of the genus, whereas the second one is a direct (up to some computations) consequence of Lemmas \ref{testlinking}, \ref{positivesphere} and \ref{genusintersezione}.
\end{proof}

\begin{remark}
It is easy to see that there are no non-trivial critical points for the action functional $\cL$ at levels $\alpha\leq0$. Indeed, let $\psi\in Y$ be such that $d\cL(\psi)=0$ and $\cL(\psi)=\alpha$. A simple computation gives
$$\left(\frac{1}{2}-\frac{1}{p} \right)\int_{\cK}\vert\psi\vert^{p}=\alpha, $$
which implies that $\alpha\geq0$. Suppose, now, that $\alpha=0$. Consequently, $\psi$ vanishes on the compact core $\cK$. Then, it follows that $\psi^{1}_e(\vv)=\psi^{2}_e(\vv)=0$, $\forall \vv,e\in\cK$, and thus, exploiting \eqref{eq-kirchtype1} and \eqref{explicithalflines}, that $\psi\equiv0$ on $\cG$.
\end{remark}
Now, before giving the proof of Theorem \ref{thm-bound}, we discuss the compactness properties of Palais-Smale sequences.

\begin{prop}
\label{prop-PSbounded}
For every $\alpha>0$, Palais-Smale sequences at level $\alpha$ are bounded in $Y$.
\end{prop}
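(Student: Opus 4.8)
The plan is to work with a Palais--Smale sequence $(\psi_n)\subset Y$ at level $\alpha>0$, so that $\cL(\psi_n)\to\alpha$ and $d\cL(\psi_n)\to 0$ in $Y^*$, and to extract quantitative information by testing $d\cL(\psi_n)$ against well-chosen spinors. The starting point is the elementary algebraic identity one gets by combining $\cL(\psi_n)$ with $\langle d\cL(\psi_n)\,|\,\psi_n\rangle$: since
\[
\cL(\psi_n)-\tfrac12\langle d\cL(\psi_n)\,|\,\psi_n\rangle=\Bigl(\tfrac12-\tfrac1p\Bigr)\int_\cK|\psi_n|^p\dx,
\]
and the left-hand side is $\alpha+o(1)+o(1)\|\psi_n\|$, we immediately obtain a bound of the form
\[
\int_\cK|\psi_n|^p\dx\leq C\bigl(1+\|\psi_n\|\bigr).
\]
This is the $L^p(\cK)$-control that drives everything else; it uses only $p>2$ and that $\alpha$ is finite.

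Next I would recover control of the two spectral components $\psi_n^{\pm}=P^{\pm}_\omega\psi_n$ separately. Testing $d\cL(\psi_n)$ against $\psi_n^{+}$ and against $-\psi_n^{-}$ and using the rewriting $\cL(\psi)=\tfrac12(\|\psi^+\|_\omega^2-\|\psi^-\|_\omega^2)-\tfrac1p\int_\cK|\psi|^p$ gives
\[
\|\psi_n^{\pm}\|_\omega^2=\pm\langle d\cL(\psi_n)\,|\,\psi_n^{\pm}\rangle+\int_\cK|\psi_n|^{p-2}\langle\psi_n,\psi_n^{\pm}\rangle\dx
\leq o(1)\|\psi_n^\pm\|_\omega+\int_\cK|\psi_n|^{p-1}|\psi_n^{\pm}|\dx.
\]
By H\"older the last integral is bounded by $\|\psi_n\|_{L^p(\cK)}^{p-1}\|\psi_n^{\pm}\|_{L^p(\cK)}$, and by the Sobolev embedding \eqref{sobolev}, $\|\psi_n^{\pm}\|_{L^p(\cK)}\leq C\|\psi_n^{\pm}\|_\omega\leq C\|\psi_n\|_\omega$. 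Summing the two estimates and writing $\|\psi_n\|_\omega^2=\|\psi_n^+\|_\omega^2+\|\psi_n^-\|_\omega^2$ yields
\[
\|\psi_n\|_\omega^2\leq o(1)\|\psi_n\|_\omega+C\,\|\psi_n\|_{L^p(\cK)}^{p-1}\,\|\psi_n\|_\omega ,
\]
hence $\|\psi_n\|_\omega\leq o(1)+C\|\psi_n\|_{L^p(\cK)}^{p-1}$. Combining with the first step, $\|\psi_n\|_{L^p(\cK)}^p\leq C(1+\|\psi_n\|_\omega)$, gives a closed inequality $\|\psi_n\|_\omega\leq C(1+\|\psi_n\|_\omega^{(p-1)/p})$, and since $(p-1)/p<1$ this forces $\|\psi_n\|_\omega$ to be bounded. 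Equivalence of $\|\cdot\|_\omega$ with the $Y$-norm then gives boundedness in $Y$.

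The step I expect to be the main obstacle is the bookkeeping in the second paragraph: one must be careful that the pairing $\langle d\cL(\psi_n)\,|\,\psi_n^{\pm}\rangle$ is genuinely $o(1)\|\psi_n^{\pm}\|_\omega=o(\|\psi_n\|_\omega)$ (rather than only $o(1)$ times $\|\psi_n\|$), which is fine since $\|d\cL(\psi_n)\|_{Y^*}\to 0$ and $\|\psi_n^{\pm}\|_\omega\leq\|\psi_n\|_\omega$, and that the cross term $\int_\cK|\psi_n|^{p-2}\langle\psi_n,\psi_n^{\pm}\rangle$ is handled by H\"older with the right exponents together with the \emph{compact} Sobolev embedding $Y\hookrightarrow L^p(\cK,\C^2)$ — this is exactly where localization of the nonlinearity on $\cK$ is used. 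Everything else is the standard ``$\cL(\psi_n)-\tfrac12 d\cL(\psi_n)[\psi_n]$'' trick plus the observation that the resulting self-improving inequality has a subcritical exponent because $p>2$; no use of $p<6$ is needed here, consistently with the fact that the proposition is stated for all $p>2$.
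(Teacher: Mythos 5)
Your proposal is correct and follows essentially the same route as the paper's proof: the identity $\cL(\psi_n)-\tfrac12\langle d\cL(\psi_n)|\psi_n\rangle=(\tfrac12-\tfrac1p)\int_\cK|\psi_n|^p\dx$ to get $L^p(\cK)$-control in terms of $\|\psi_n\|_\omega$, then testing $d\cL(\psi_n)$ against $P^{\pm}_\omega\psi_n$ with H\"older and the Sobolev embedding \eqref{sobolev} to close a self-improving inequality whose exponent is subcritical because $p>2$. The only difference is presentational (the paper argues by contradiction assuming $\|\psi_n\|_\omega\to\infty$, you argue directly), plus a harmless sign slip in the ``$-$'' case of your tested identity that is absorbed once you pass to absolute values.
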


\begin{proof}
Let $(\psi_{n})$ be a Palais-Smale sequence at level $\alpha>0$ and assume by contradiction that, up to subsequences, 
\[
\Vert\psi_{n}\Vert_{\omega}\longrightarrow\infty,\qquad\text{as}\quad n\rightarrow\infty.
\]
Simple computations show that, for $n$ large,
\[
\begin{split}
\left(\frac{1}{2}-\frac{1}{p} \right)\int_{\cK}\vert\psi_{n}\vert^{p}\dx=\cL(\psi_{n})-\frac{1}{2}\langle d\cL(\psi_{n})\vert\psi_{n}\rangle\leq C+\Vert\psi_{n}\Vert_{\omega}.
\end{split}
\]
and (recalling the definition of $P_\omega^{\pm}$ given by \eqref{eq-projomega})
\[
\left\vert\langle d\cL(\psi_{n})\vert P^{+}_{\omega}\psi_{n}\rangle \right\vert=\left\vert\int_{\cG}\langle P^{+}_{\omega}\psi_{n},(\cD-\omega)\psi_{n}\rangle\dx-\int_{\cK}\vert\psi_{n}\vert^{p-2}\langle\psi_{n},P^{+}_{\omega}\psi_{n}\rangle\dx \right\vert\leq\Vert\psi_{n}\Vert_{\omega}.
\]
As a consequence, using the H\"{o}lder inequality and \eqref{sobolev}, we get
\be\label{subquadratic}
\begin{split}
\left\vert \int_{\cG}\langle P^{+}_{\omega}\psi_{n},(\cD-\omega)\psi_{n}\rangle\dx\right\vert & \leq\Vert\psi_{n}\Vert_{\omega}+\int_{\cK}\vert\psi_{n}\vert^{p-1}\vert P^{+}_{\omega}\psi_{n}\vert\dx \\
&\leq\Vert\psi_{n}\Vert_{\omega}+\left(\int_{\cK}\vert\psi_{n}\vert^{p}\dx \right)^{\frac{p-1}{p}}\left(\int_{\cK}\vert P^{+}_{\omega}\psi_{n}\vert^{p}\dx \right)^{\frac{1}{p}}\\[.2cm]
& \leq C\left(1+\Vert\psi_{n}\Vert_{\omega}\right)^{1-\frac{1}{p}}\Vert\psi_{n}\Vert_{\omega}.
\end{split}
\ee
On the other hand, by the definition of $P_\omega^{\pm}$, one sees that
\[
\Vert P^{+}_{\omega}\psi_{n}\Vert^{2}_{\omega}=\int_{\cG}\langle P^{+}_{\omega}\psi_n,(\cD-\omega)P^{+}_{\omega}\psi_{n}\rangle\dx=\int_{\cG}\langle P^{+}_{\omega}\psi_n,(\cD-\omega)\psi_{n}\rangle\dx
\]
and, combining with \eqref{subquadratic},
\[
\Vert P^{+}_{\omega}\psi_{n}\Vert^{2}_{\omega}\leq C\left(1+\Vert\psi_{n}\Vert_{\omega}\right)^{1-\frac{1}{p}}\Vert\psi_{n}\Vert_{\omega}.
\]
Arguing as before, one also finds that
\[
\Vert P^{-}_{\omega}\psi_{n}\Vert^{2}_{\omega}\leq C\left(1+\Vert\psi_{n}\Vert_{\omega}\right)^{1-\frac{1}{p}}\Vert\psi_{n}\Vert_{\omega}
\]
and hence
\[
\Vert\psi_{n}\Vert^{2}_{\omega}\leq \left(C+\Vert\psi_{n}\Vert_{\omega}\right)^{1-\frac{1}{p}}\Vert\psi_{n}\Vert_{\omega},
\]
which is a contradiction if $\Vert\psi_{n}\Vert_{\omega}\to\infty$, since $1-\f{1}{p}\in(\f{1}{2},1)$ as $p>2$.
\end{proof}

\begin{lemma}
\label{compactness}
For every $\alpha>0$, Palais-Smale sequences at level $\alpha$ are pre-compact in $Y$.
\end{lemma}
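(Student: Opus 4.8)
The plan is to upgrade the boundedness from Proposition \ref{prop-PSbounded} to strong convergence in $Y$, using the local compactness of the Sobolev embedding on the compact core $\cK$ together with the spectral splitting $Y = Y^+ \oplus Y^-$. Let $(\psi_n)$ be a Palais-Smale sequence at level $\alpha > 0$. By Proposition \ref{prop-PSbounded} it is bounded in $Y$, so up to a subsequence $\psi_n \rightharpoonup \psi$ weakly in $Y$. Since $Y \hookrightarrow L^p(\cG,\C^2)$ and the embedding into $L^p(\cK,\C^2)$ is \emph{compact} (as recalled after \eqref{sobolev}), one has $\psi_n \to \psi$ strongly in $L^p(\cK,\C^2)$. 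The first thing to check is that the limit $\psi$ is itself a critical point of $\cL$: passing to the limit in $\langle d\cL(\psi_n)\,|\,\varphi\rangle = 0 + o(1)$ for fixed $\varphi \in Y$, the quadratic term converges by weak convergence and the nonlinear term $\int_\cK |\psi_n|^{p-2}\langle\psi_n,\varphi\rangle$ converges by the strong $L^p(\cK)$ convergence; hence $d\cL(\psi) = 0$.

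Next I would test the Palais-Smale condition against the spectral projections. Write $\psi_n^\pm := P^\pm_\omega \psi_n$ and $\psi^\pm := P^\pm_\omega \psi$; note $\psi_n^\pm \rightharpoonup \psi^\pm$ in $Y$ since $P^\pm_\omega$ are bounded. From $d\cL(\psi_n) \to 0$ in $Y^*$ we get, evaluating on $\psi_n^+ - \psi^+ \in Y^+$,
\[
  \langle d\cL(\psi_n)\,|\,\psi_n^+ - \psi^+\rangle
  = \big(\psi_n^+, \psi_n^+ - \psi^+\big)_\omega - \int_\cK |\psi_n|^{p-2}\langle \psi_n,\, \psi_n^+ - \psi^+\rangle\dx
  = o(1),
\]
using that $(\cD-\omega)$ acts as $+|\cD-\omega|$ on $Y^+$ so that $\int_\cG \langle P^+_\omega\psi_n, (\cD-\omega)(\psi_n^+-\psi^+)\rangle\dx = (\psi_n^+,\psi_n^+-\psi^+)_\omega$. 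The integral over $\cK$ tends to $0$: by Hölder, $|\psi_n|^{p-2}$ is bounded in $L^{p/(p-2)}(\cK)$ and $\psi_n^+ - \psi^+ \to 0$ in $L^p(\cK,\C^2)$ (again by the compact embedding applied to the bounded sequence $(\psi_n^+)$). Subtracting the analogous identity for $d\cL(\psi)$ evaluated on $\psi_n^+-\psi^+$ (which vanishes), and using $(\psi^+,\psi_n^+-\psi^+)_\omega \to 0$ by weak convergence, I obtain $(\psi_n^+ - \psi^+,\,\psi_n^+-\psi^+)_\omega = \|\psi_n^+ - \psi^+\|_\omega^2 \to 0$, i.e. $\psi_n^+ \to \psi^+$ strongly in $Y$. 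The same computation with $\psi_n^- - \psi^- \in Y^-$, where $(\cD-\omega)$ acts as $-|\cD-\omega|$, gives $\|\psi_n^- - \psi^-\|_\omega^2 \to 0$, hence $\psi_n^- \to \psi^-$ strongly as well. Adding the two pieces, $\psi_n \to \psi$ strongly in $Y$, which is the claim.

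The only delicate point is the treatment of the nonlinear term on the half-lines: a priori the compact embedding is available only on $\cK$, not on all of $\cG$. This is precisely why the computation above is arranged so that the nonlinearity is only ever integrated over $\cK$ (the nonlinearity is localized there), while the kinetic quadratic form controls the full $Y$-norm directly through the coercive, sign-definite action of $|\cD-\omega|$ on each spectral subspace. Thus no compactness on the noncompact part is needed, and the strong convergence of the ``infinite-dimensional'' components $\psi_n^\pm$ comes entirely from the quadratic form once the nonlinear remainder has been shown to be $o(1)$ in $Y^*$ against the relevant test spinors. I would also remark, for completeness, that $\psi \not\equiv 0$: since $\cL(\psi_n) \to \alpha > 0$ and $\cL$ is continuous along the now strongly convergent sequence, $\cL(\psi) = \alpha > 0$, so $\psi$ is a nontrivial critical point (consistently with the Remark preceding the statement).
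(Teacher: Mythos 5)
Your proposal is correct and follows essentially the same route as the paper: weak convergence plus the compact embedding $Y\hookrightarrow L^p(\cK,\C^2)$ to kill the nonlinear term, then testing $d\cL(\psi_n)$ against $P^{\pm}_{\omega}(\psi_n-\psi)$ and using the sign-definiteness of $(\cD-\omega)$ on each spectral subspace to recover $\Vert P^{\pm}_{\omega}(\psi_n-\psi)\Vert_{\omega}^2\to 0$. The extra observations (that the limit is a critical point and nontrivial) are not needed for pre-compactness but are harmless and consistent with the remark preceding the lemma.
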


\begin{proof}
Let $(\psi_{n})$ be a Palais-Smale sequence at level $\alpha>0$. From Proposition \ref{prop-PSbounded}, it is bounded and then, up to subsequences,
\begin{equation}\label{convergence}
 \begin{array}{ll}
\displaystyle \psi_{n}\rightharpoondown\psi, & \mbox{in}\quad Y,\\[.2cm]
\displaystyle \psi_{n}\longrightarrow\psi,   & \mbox{in}\quad L^{p}(\cK,\C^{2}).
\end{array}
\end{equation}
On the other hand, by definition
\be\label{zero}
\begin{split}
o(1) & =\langle d\cL(\psi_{n})\vert P^{+}_{\omega}(\psi_{n}-\psi)\rangle\\[.2cm]
     & = \int_{\cG}\langle P^{+}_{\omega}(\psi_{n}-\psi),(\cD-\omega)\psi_{n}\rangle\dx-\int_{\cK}\vert\psi_{n}\vert^{p-2}\langle\psi_{n},P^{+}_{\omega}(\psi_{n}-\psi)\rangle\dx,
\end{split}
\ee
and (again) by H\"{o}lder inequality and \eqref{convergence}
\[
\begin{split}
\left\vert\int_{\cK}\vert\psi_{n}\vert^{p-2}\langle\psi_{n},P^{+}_{\omega}(\psi_{n}-\psi)\rangle\dx\right\vert & \leq \int_{\cK}\vert\psi_{n}\vert^{p-1}\vert P^{+}_{\omega}\psi_{n}-\psi)\vert\dx\\[.2cm]
& \leq\Vert\psi_{n}\Vert^{p-1}_{L^{p}(\cK,\C^2)}\Vert P^{+}_{\omega}(\psi_{n}-\psi)\Vert_{L^{p}(\cK,\C^2)}=o(1).
\end{split}
\]
As a consequence, combining with \eqref{zero},
\be\label{eq-quadratic}
\int_{\cG}\langle P^{+}_{\omega}(\psi_{n}-\psi),(\cD-\omega)\psi_{n}\rangle\dx=o(1).
\ee
In addition, since $(\psi_{n}-\psi)\rightharpoondown 0$ in $Y$, we get
\[
\int_{\cG}\langle P^{+}_{\omega}(\psi_{n}-\psi),(\cD-\omega)\psi\rangle\dx=o(1)
\]
and, summing with \eqref{eq-quadratic}, there results
\[
\Vert P^{+}_{\omega}(\psi_{n}-\psi)\Vert^{2}_{\omega}=\int_{\cG}\langle(\cD-\omega)P^{+}_{\omega}(\psi_{n}-\psi),P^{+}_{\omega}(\psi_{n}-\psi)\rangle\dx=o(1).
\]
Since, analogously, one can prove that 
\[
\Vert P^{-}_{\omega}(\psi_{n}-\psi)\Vert^{2}_{\omega}=\int_{\cG}\langle(\cD-\omega)P^{-}_{\omega}(\psi_{n}-\psi),P^{-}_{\omega}(\psi_{n}-\psi)\rangle\dx=o(1),
\]
we obtain
\[
\Vert\psi_{n}-\psi\Vert^{2}_{\omega}=o(1),
\]
which concludes the proof.
\end{proof}
Finally, we have all the ingredients in order to prove Theorem \ref{thm-bound}.
\begin{proof}[Proof of Theorem \ref{thm-bound}]
By Corollary \ref{genusminmax}, for every $N\in\mathbb{N}$, there exists at least a Palais-Smale sequence at level $\alpha_N>0$ (defined by \eqref{minmax}) and, by Proposition \ref{compactness}, it converges to a critical point of $\cL$, which is via Proposition \ref{prop-boundcrit} a bound state of the NLDE.

Now, if the inequalities in \eqref{eq-levelorder} are strict, then one immediately obtains the claim. However, if $\alpha_j=\alpha_{j+1}=\dots=\alpha_{j+q}=\alpha$, for some $q\geq1$, then the claim follows by \cite[Proposition 10.8]{AM} as the properties of the genus imply the existence of infinitely many critical points at level $\alpha$.
\end{proof}


\section{Nonrelativistic limit of solutions}
\label{sec-limit}

In this section we prove Theorem \ref{thm-limit}; namely, that there exists a wide class of (pairs of) sequences $(c_n)$, $(\omega_n)$ for which the nonrelativistic limit holds. More precisely, we show that with such a choice of parameters the bound states of the NLDE converge, as $c_{n}\rightarrow+\infty$, to the bound states of the NLSE with $\alpha=2m$.

The strategy that we use is the one developed by M.J. Esteban and  E. S\'er\'e in \cite{ES-ANIHPA} for the case of Dirac-Fock equations. However, the differences between both the equations and the frameworks discussed call for some relevant modifications. In particular, while in \cite{ES-ANIHPA} one of the main point is the estimate of the sequence of the Lagrange multipliers of bound states with $L^2$-norm fixed, here the major point (since there is no constraint) is to prove that the limit is non-trivial. Moreover, we also have to distinguish different cases according to the exponent $p\in(2,6)$ of the nonlinearity.

\medskip
Preliminarily, note that, since here the role of the (sequence of the) speed of light is central, we cannot set any more $c=1$. As a consequence, all the previous results has to be meant with $m$ replaced by $mc_n^2$ (and $\omega$ replaced by $\omega_n$).
In addition, we denote by $\cD_n$ the Dirac operator with $c=c_n$ and with $\cL_n$ the action functional with $\cD=\cD_n$ and $\omega=\omega_n$. There are clearly many other quantities which actually depend on the index $n$ (such as, for instance, the form domain $Y$, $Z_N$, $\dots$), but since such a dependence is not crucial we omit it for the sake of simplicity. In addition, in the following, we will always make the assumptions \eqref{cdiverge},\eqref{eq-smaller} and \eqref{parameterconvergence} on the parameters $(c_n)$, $(\omega_n)$. In particular, those assumptions immediately imply that
\begin{equation}
 \label{frequency}
 0<C_1\leq mc^{2}_{n}-\omega_{n}\leq C_2.
\end{equation}

Now, from Theorem \ref{thm-bound}, for every fixed $N\in\mathbb{N}$, there exist at least a pair of bound states of frequency $\omega_n$ at level $\alpha^{n}_{N}$ of the NLDE at speed of light $c_n$. Hence, we denote throughout by $(\psi_n)$ a sequence of bound states corresponding to those values of parameters. Since all the following results hold for every fixed $N\in\mathbb{N}$, the dependence on $N$ is understood in the sequel (unless stated otherwise).


\subsection{$H^1$-boundedness of the sequence of the bound states}

The first step is to prove that the sequence $(\psi_n)$ defined above is bounded in $L^{p}(\cK,\C^2)$.

\begin{lemma}
\label{lpbound}
Under the assumptions \eqref{cdiverge}, \eqref{eq-smaller} and \eqref{parameterconvergence}, the sequence $(\psi_n)$ is bounded in $L^{p}(\cK,\C^2)$ (uniformly with respect to $n$), as well as the associated minimax levels $(\alpha^{n}_{N})$.
\end{lemma}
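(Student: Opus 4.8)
The plan is to exploit the variational characterization of the bound states $\psi_n$ together with the two fundamental facts established in Section \ref{sec-bound}: first, that $\psi_n$ is a critical point of $\cL_n$ at the minimax level $\alpha_N^n$, and second, that one has the uniform control \eqref{frequency} on $mc_n^2-\omega_n$. The key identity is the same one used in the Remark after Corollary \ref{genusminmax}: evaluating $\cL_n(\psi_n)-\frac12\langle d\cL_n(\psi_n)\mid\psi_n\rangle$ and using $d\cL_n(\psi_n)=0$ gives
\[
 \left(\frac12-\frac1p\right)\int_\cK|\psi_n|^p\dx=\cL_n(\psi_n)=\alpha_N^n.
\]
So the $L^p(\cK,\C^2)$-boundedness of $(\psi_n)$ is \emph{equivalent} to the boundedness of the minimax levels $(\alpha_N^n)$, and the whole lemma reduces to producing a uniform-in-$n$ upper bound for $\alpha_N^n$.

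To bound $\alpha_N^n$ from above I would return to the definition \eqref{minmax}–\eqref{family}: $\alpha_N^n=\inf_{X\in\cF_N^n}\sup_{\psi\in X}\cL_n(\psi)\leq\sup_{\cM_N^n}\cL_n$, where $\cM_N^n$ is the set from \eqref{eq-emmeenne} built (for the operator $\cD_n-\omega_n$) over an $N$-dimensional space $Z_N\subset Y^+$. The crucial observation is that the test space $Z_N$ can be chosen \emph{independently of $n$}: it is spanned by finitely many smooth spinors of the form $(\eta^1,0)^T$ with $\eta^1\in C_0^\infty(I_e)$ supported in a fixed bounded edge, exactly as in the proof of Lemma \ref{testlinking}. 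On such spinors one has the explicit relation \eqref{partepositiva} (now reading $\int_\cG\langle\eta,\cD_n\eta\rangle\dx=mc_n^2\int_\cG|\eta^1|^2\dx$), and more to the point, for any $\psi=\varphi+\xi\in\cM_N^n$ with $\varphi\in Y^-$ (spectral subspace of $\cD_n-\omega_n$) and $\xi\in Z_N$, the representation \eqref{rewritten} rewritten with $\|\cdot\|_{\omega_n}$ gives
\[
 \cL_n(\psi)=\frac12\left(\|\xi\|_{\omega_n}^2-\|\varphi\|_{\omega_n}^2\right)-\frac1p\int_\cK|\varphi+\xi|^p\dx\leq\frac12\|\xi\|_{\omega_n}^2\leq\frac{R_n^2}{2}.
\]
Thus I must show that the radius $R_n$ furnished by Lemma \ref{testlinking} — or rather, a radius that still makes \eqref{negative} hold for $\cL_n$ — can be chosen bounded in $n$. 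Tracking the proof of Lemma \ref{testlinking}, the constraint on $R$ came from the inequality $\frac{R^2}{2}-CR^p\leq0$, i.e. $R\geq(2C)^{-1/(p-2)}$, where $C$ is the equivalence constant between the $L^2$-norm and the $Y$-norm (i.e. the $\|\cdot\|_{\omega_n}$-norm) on the \emph{fixed} finite-dimensional space $\operatorname{span}\{\eta_1,\dots,\eta_N\}\subset V$. On that fixed space the quadratic form of $\cD_n-\omega_n$ is, by \eqref{partepositiva}, $\int_\cG\langle\eta,(\cD_n-\omega_n)\eta\rangle=(mc_n^2-\omega_n)\int_\cG|\eta^1|^2$; combined with \eqref{frequency} this shows the $\|\cdot\|_{\omega_n}$-norm is, on this fixed space, equivalent to the $L^2$-norm with constants \emph{independent of $n$}. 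Hence $C$ can be taken independent of $n$, $R_n$ can be taken independent of $n$, and therefore $\alpha_N^n\leq R_N^2/2$ uniformly.

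Putting the pieces together: the uniform bound on $\alpha_N^n$ gives $\int_\cK|\psi_n|^p\dx\leq\frac{2p}{p-2}\,\alpha_N^n\leq C$, i.e. $(\psi_n)$ is bounded in $L^p(\cK,\C^2)$ uniformly in $n$, and the minimax levels are bounded by construction. The main obstacle is precisely the $n$-dependence of the functional-analytic data (the spectral decomposition $Y=Y^+\oplus Y^-$, the equivalent norms $\|\cdot\|_{\omega_n}$, the linking radius): one must be careful that the test space $Z_N$ and the constants entering Lemma \ref{testlinking} are chosen once and for all, and that all the $n$-dependence is absorbed into the harmless, uniformly bounded factor $mc_n^2-\omega_n$ via \eqref{frequency}. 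Everything else is a routine re-run of the estimates in Section \ref{sec-bound} with $m\rightsquigarrow mc_n^2$ and $\omega\rightsquigarrow\omega_n$.
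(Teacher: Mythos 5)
Your overall strategy is the same as the paper's: reduce everything to a uniform upper bound on $\alpha_N^n$ via the identity $(\tfrac12-\tfrac1p)\int_\cK|\psi_n|^p\dx=\cL_n(\psi_n)-\tfrac12\langle d\cL_n(\psi_n)|\psi_n\rangle=\alpha_N^n$, and obtain that bound by testing $\cL_n$ on a set built over the fixed, $n$-independent space $Z_N$ of (projections of) upper-component spinors supported in one bounded edge, using \eqref{partepositiva} and \eqref{frequency}. However, the route you take to bound $\alpha_N^n$ has a genuine gap.

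You estimate $\alpha_N^n\leq\sup_{\cM_N^n}\cL_n\leq R_n^2/2$ and then claim $R_n$ can be chosen independently of $n$ because, by \eqref{partepositiva}, the $\Vert\cdot\Vert_{\omega_n}$-norm is uniformly equivalent to the $L^2$-norm on the fixed finite-dimensional space. This is where the argument breaks: \eqref{partepositiva} controls the \emph{quadratic form}
\[
\int_{\cG}\langle\eta,(\cD_n-\omega_n)\eta\rangle\dx=\Vert\eta^+\Vert_{\omega_n}^2-\Vert\eta^-\Vert_{\omega_n}^2=(mc_n^2-\omega_n)\int_{\cG}|\eta^1|^2\dx,
\]
i.e.\ the \emph{difference} of the spectral components, whereas the norm is the \emph{sum},
$\Vert\eta\Vert_{\omega_n}^2=\Vert\eta^+\Vert_{\omega_n}^2+\Vert\eta^-\Vert_{\omega_n}^2=\int_{\cG}\langle\eta,|\cD_n-\omega_n|\eta\rangle\dx$. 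The spinors $\eta=(\eta^1,0)^T$ are \emph{not} in $Y^+$, so the two quantities differ by $2\Vert\eta^-\Vert_{\omega_n}^2$, and on the negative spectral subspace $|\cD_n-\omega_n|\geq mc_n^2+\omega_n\to\infty$. To conclude that $\Vert\eta\Vert_{\omega_n}^2\leq C\Vert\eta\Vert_{L^2}^2$ uniformly on $Z_N$ you would need to show that $\Vert\eta^-\Vert_{L^2}=\cO(1/c_n)$ for fixed smooth $\eta$, a nontrivial statement about the spectral projections of $\cD_n$ on a metric graph that is established nowhere in the paper and cannot be read off from \eqref{partepositiva}. Without it, the chain in Lemma \ref{testlinking} forces $R_n\to\infty$ (the equivalence constant degenerates like $c_n^{-2}$), and your bound $\alpha_N^n\leq R_n^2/2$ becomes useless.

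The paper avoids this entirely: it bounds $\sup_{Y^-\oplus Z_N}\cL_n$ directly, writing $\psi=\varphi^\perp+\zeta$ with $\zeta=\sum_j\lambda_j\eta_j\in V$ and observing that only the \emph{difference} $\Vert\xi\Vert_{\omega_n}^2-\Vert\chi\Vert_{\omega_n}^2=\int_\cG\langle\zeta,(\cD_n-\omega_n)\zeta\rangle\dx=(mc_n^2-\omega_n)\Vert\zeta\Vert_{L^2}^2$ ever enters, together with the $n$-independent H\"older constant for the nonlinear term. This yields $\cL_n(\psi)\leq C\,t-C't^{p/2}$ with $t=\Vert\zeta\Vert_{L^2}^2$ and $C,C'$ independent of $n$, which is bounded above without any reference to the linking radius. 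You should replace your $R_n$-based step with this direct estimate (or else supply the missing proof that $\Vert P^-\eta\Vert_{L^2}=\cO(1/c_n)$ on $V$).
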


\begin{proof}
First, recalling \eqref{family} and following the notation of the proof of Lemma \ref{testlinking}, one sees that
\[
\alpha^{n}_{N}=\inf_{X\in\cF_{N}}\sup_{\psi\in X}\cL_{n}(\psi)\leq\sup_{Y^{-}\oplus Z_{N}}\cL_{n}.
\]
In addition, following again the proof of Lemma \ref{testlinking}, given an orthonormal basis $\eta^{+}_{j}, j=1,...,N$, of $Z_{N}$, every spinor $\psi\in Y^{-}\oplus Z_{N}$ can be decomposed as
\[
\psi=\varphi^{\perp}+\sum^{N}_{j=1}\lambda_{j}\eta_{j},\qquad \lambda_{1},\dots,\lambda_N\in\C,
\]
with $\varphi^{\perp}\in Y^{-}$ orthogonal to $\zeta:=\sum^{N}_{j=1}\lambda_{j}\eta_{j}\in V$. Arguing as in \eqref{holder}--\eqref{eq-quasifinal} we get
\be
\label{eq-estlev1}
\cL_{n}(\psi)\leq\frac{1}{2}\int_{\cG}\langle\zeta,(\cD_n-\omega_n)\zeta\rangle\dx-C\left( \int_{\cG}\vert\zeta\vert^{2}\dx\right)^{\frac{p}{2}}.
\ee
On the other hand, exploiting \eqref{partepositiva} and \eqref{frequency}, there results
\be
\label{eq-estlev2}
\int_{\cG}\langle\zeta,(\cD_n-\omega_n)\zeta\rangle\dx=(mc^{2}_{n}-\omega_{n})\int_{\cG}\vert\zeta\vert^{2}\dx\leq C \int_{\cG}\vert\zeta\vert^{2}\dx.
\ee
Hence, combining \eqref{eq-estlev1} and \eqref{eq-estlev2},
\[
\cL_{n}(\psi) \leq C \int_\cG|\zeta|^2\dx\left[1-\bigg(\int_\cG|\zeta|^2\dx\bigg)^{\f{p-2}{2}}\right]
\]
and thus, since $V$ does not depend on $n$ and since $p>2$
\[
\alpha^{n}_{N}\leq\max_{Y^{-}\oplus Z_{N}}\cL_{n}\leq C<+\infty,\qquad \forall n\in\mathbb{N}.
\]
Finally, as $\psi_{n}$ is a critical point of the action functional,
\[
\alpha^{n}_{N}=\cL_{n}(\psi_{n})-\frac{1}{2}\langle d\cL_{n}(\psi_{n}),\psi_{n}\rangle=\left(\frac{1}{2}-\frac{1}{p} \right)\int_{\cK}\vert\psi_{n}\vert^{p},
\]
which concludes the proof.
\end{proof}

We can now prove that boundedness on $L^{p}(\cK,\C^2)$ entails boundedness on $L^{2}(\cG,\C^2)$.

\begin{lemma}
\label{l2bound}
Under the assumptions \eqref{cdiverge}, \eqref{eq-smaller} and \eqref{parameterconvergence}, the sequence $(\psi_n)$ is bounded in $L^{2}(\cG,\C^2)$ (uniformly with respect to $n$).
\end{lemma}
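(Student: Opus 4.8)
The plan is to estimate the two spinor components $\psi_n^1$ and $\psi_n^2$ separately, by means of a single ``energy identity'' obtained by pairing \eqref{eq-NLDbound} componentwise with $\psi_n$. Writing the two scalar lines of \eqref{eq-NLDbound} (with $c=c_n$, $\omega=\omega_n$), which hold a.e.\ on each edge,
\begin{gather*}
 -\imath c_n(\psi_n^2)'+(mc_n^2-\omega_n)\psi_n^1=\chi_{_\cK}|\psi_n|^{p-2}\psi_n^1,\\
 -\imath c_n(\psi_n^1)'-(mc_n^2+\omega_n)\psi_n^2=\chi_{_\cK}|\psi_n|^{p-2}\psi_n^2,
\end{gather*}
I would multiply the first line by $\overline{\psi_n^1}$ and integrate over $\cG$ (all the quantities are finite for each fixed $n$, since $\psi_n\in\dom(\cD_n)\subset H^1(\cG,\C^2)$). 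Integrating by parts the term $-\imath c_n\int_\cG\overline{\psi_n^1}(\psi_n^2)'\dx$: the contributions at the vertices vanish because $\psi_n^1$ is single-valued there by \eqref{eq-kirchtype1} and $\sum_{e\succ\vv}\psi^2_{n,e}(\vv)_\pm=0$ by \eqref{eq-kirchtype2}, while those at infinity vanish by the exponential decay \eqref{explicithalflines} of $\psi_n$ along the half-lines; hence $-\imath c_n\int_\cG\overline{\psi_n^1}(\psi_n^2)'\dx=\imath c_n\int_\cG\overline{(\psi_n^1)'}\,\psi_n^2\dx$, and substituting the conjugate of the second scalar line above one gets
\[
 -\imath c_n\int_\cG\overline{\psi_n^1}(\psi_n^2)'\dx=(mc_n^2+\omega_n)\Vert\psi_n^2\Vert_{L^2(\cG)}^2+\int_\cK|\psi_n|^{p-2}|\psi_n^2|^2\dx .
\]
Collecting the pieces yields the identity
\[
 (mc_n^2-\omega_n)\Vert\psi_n^1\Vert_{L^2(\cG)}^2+(mc_n^2+\omega_n)\Vert\psi_n^2\Vert_{L^2(\cG)}^2+\int_\cK|\psi_n|^{p-2}|\psi_n^2|^2\dx=\int_\cK|\psi_n|^{p-2}|\psi_n^1|^2\dx .
\]
(The same relation is nothing but $\langle d\cL_n(\psi_n)\,|\,(\psi_n^1,0)^T\rangle=0$, a legitimate identity by \eqref{eq-boundweak} since $(\psi_n^1,0)^T\in\dom(\cD_n)$; this alternative reading spares one the integration by parts.)

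From this identity the conclusion is immediate. By \eqref{eq-smaller} both coefficients $mc_n^2-\omega_n$ and $mc_n^2+\omega_n$ are strictly positive, so each of the three summands on the left-hand side is nonnegative; on the other hand $|\psi_n^1|^2\leq|\psi_n|^2$ on $\cK$, whence the right-hand side is bounded above by $\int_\cK|\psi_n|^p\dx$, which is uniformly bounded by Lemma \ref{lpbound}. Thus $(mc_n^2-\omega_n)\Vert\psi_n^1\Vert_{L^2(\cG)}^2\leq C$ and, since $mc_n^2-\omega_n\geq C_1>0$ by \eqref{frequency}, the sequence $\Vert\psi_n^1\Vert_{L^2(\cG)}$ is bounded; likewise $\Vert\psi_n^2\Vert_{L^2(\cG)}^2\leq C/(mc_n^2+\omega_n)$, which is bounded --- and in fact tends to $0$, since $mc_n^2+\omega_n\to+\infty$. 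Adding the two bounds gives the claim.

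I do not expect a genuine obstacle here: the only delicate point is the integration by parts on the noncompact graph together with the vanishing of all the boundary terms, and both rest solely on $\psi_n$ lying in $\dom(\cD_n)$ (hence on the Kirchhoff-type conditions) and on the explicit behaviour of the bound states along the half-lines. As a byproduct, the identity above also shows that $\psi_n^2\to0$ strongly in $L^2(\cG,\C^2)$, a first instance of the smallness of the lower component that will be needed in the proof of Theorem \ref{thm-limit}.
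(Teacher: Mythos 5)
Your argument is correct, but it takes a genuinely different route from the paper's. The paper decomposes $\psi_n$ into its positive and negative \emph{spectral} parts $P^{\pm}_{\omega_n}\psi_n$ for $\cD_n-\omega_n$, tests the equation against each part, and combines the spectral-gap estimate $\int_{\cG}\langle\psi_n^{+},(\cD_n-\omega_n)\psi_n^{+}\rangle\dx\geq(mc_n^2-\omega_n)\Vert\psi_n^{+}\Vert_{L^2}^2$ (and its analogue for $\psi_n^{-}$) with H\"older's inequality, Lemma \ref{lpbound} and \eqref{frequency}. You instead split $\psi_n$ into its two \emph{spinor} components, pair the first scalar line with $\overline{\psi_n^1}$, integrate by parts --- the vertex contributions cancel precisely by \eqref{eq-kirchtype1}--\eqref{eq-kirchtype2} and the contributions at infinity vanish by the decay along the half-lines --- and eliminate $(\psi_n^1)'$ through the second scalar line, arriving at an exact identity whose left-hand side is a sum of three nonnegative terms and whose right-hand side is controlled by $\int_\cK\vert\psi_n\vert^p\dx$. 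Both routes are sound, and both yield the extra information that the ``small'' part is $\cO(1/c_n)$ in $L^2$. Your version is more elementary (no spectral projectors) and, notably, it bypasses the inequality $\Vert P^{\pm}_{\omega_n}\psi_n\Vert_{L^p(\cK,\C^2)}\leq\Vert\psi_n\Vert_{L^p(\cK,\C^2)}$, which the paper invokes without proof and which is not immediate since the projectors are only orthogonal in $L^2$; it is also closer in spirit to the componentwise computations the paper carries out later in Section \ref{sec-limit} starting from \eqref{eqcomponenti1}--\eqref{eqcomponenti2}. The paper's version, on the other hand, gets the sign-definiteness for free from the spectral theorem, with no integration by parts or vertex bookkeeping, and reuses machinery already set up for the Palais--Smale analysis. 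A final cosmetic remark: your parenthetical identification of the identity with $\langle d\cL_n(\psi_n)\,\vert\,(\psi_n^1,0)^T\rangle=0$ is indeed the cleanest justification, since $(\psi_n^1,0)^T\in\dom(\cD_n)$ (the first component is continuous at the vertices and the zero second component trivially satisfies \eqref{eq-kirchtype2}).
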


\begin{proof}
For the sake of simplicity, denote by $\psi^{\pm}$ the projections of the spinor $\psi\in Y$ given by \eqref{eq-projomega} (with $\omega=\omega_n$). As the spectrum of the operator $\cD_{n}-\omega_{n}$ is 
\be
\label{eq-newsp}
\sigma(\cD_{n}-\omega_{n})=(-\infty,-mc^{2}_{n}-\omega_{n}]\cup[mc^{2}_{n}-\omega_{n},+\infty)
\ee
and $\psi_n$ satisfies \eqref{eq-NLDbound} (with $c=c_n$ and $\omega=\omega_n$),
H\"{o}lder inequality yields 
\[
\begin{split}
0\leq\int_{\cG}\langle\psi^{+}_{n},(\cD_{n}-\omega_{n})\psi_{n}^+\rangle\dx&=\int_{\cG}\langle\psi^{+}_{n},(\cD_{n}-\omega_{n})\psi_{n}\rangle\dx\leq\int_{\cK}\vert\psi_{n}\vert^{p-1}\vert\psi^{+}_{n}\vert\dx\\[.2cm]
&\leq\left(\int_{\cK}\vert\psi_{n}\vert^{p}\dx \right)^{\frac{p-1}{p}}\left(\int_{\cK}\vert\psi^{+}_{n}\vert^{p}\dx \right)^{\frac{1}{p}}\leq C\int_{\cK}\vert\psi_{n}\vert^{p}\dx
\end{split}
\]
for some $C>0$, where in the last inequality we used the fact that the decomposition 
\[ 
Y=Y^{+}_{\omega_n}\oplus Y^{-}_{\omega_n}
\]
induces an analogous decomposition on $L^p(\cK)$, that is
\[
\Vert \psi^{\pm}\Vert_{L^{p}(\cK,\C^2)}\leq \Vert\psi\Vert_{L^{p}(\cK,\C^2)},\quad\forall \psi\in Y.
\]
Moreover, using \eqref{eq-newsp} one can prove that
\[
\int_{\cG}\langle\psi^{+}_{n},(\cD_{n}-\omega_{n})\psi^{+}_{n}\rangle\dx\geq(mc^{2}_{n}-\omega_{n})\int_{\cG}\vert\psi^{+}_{n}\vert^{2}\dx.
\]
Then, combining the above observations with Lemma \ref{lpbound} and \eqref{frequency}, there results 
\[
\Vert\psi^{+}_{n}\Vert_{L^{2}(\cG,\C^2)}^2 \leq M<\infty.
\]
An analogous argument gives
\[
(mc^{2}_{n}+\omega_{n})\Vert\psi^{-}_{n}\Vert_{L^{2}(\cG,\C^2)}^2\leq M<\infty
\]
and then
\[
\Vert\psi^{-}_{n}\Vert_{L^{2}(\cG,\C^2)}=\mathcal{O}\left(\frac{1}{c_{n}}\right),\qquad\text{as}\quad n\rightarrow+\infty,
\]
which concludes the proof.
\end{proof}

Finally, we can deduce boundedness in $H^1(\cG,\C^2)$. Preliminarily, we recall two Gagliardo-Nirenberg inequalities for spinors that can be easily deduced from those on functions (see e.g. \cite[Proposition 2.6]{ST-NA}). For every $p\geq2$, there exists $C_p>0$ such that
\be
\label{eq-gnp}
\Vert\psi\Vert^{p}_{L^{p}(\cG,\C^2)}\leq C_{p}\Vert\psi\Vert^{\frac{p}{2}+1}_{L^{2}(\cG,\C^2)}\Vert\psi'\Vert^{\frac{p}{2}-1}_{L^{2}(\cG,\C^2)},\qquad\forall \psi\in H^1(\cG,\C^2).
\ee
Moreover, there exists $C_\infty>0$ such that
\be\label{gninfinito}
\Vert\psi\Vert_{L^{\infty}(\cG,\C^2)}\leq C_{\infty}\Vert\psi\Vert^{\frac{1}{2}}_{L^{2}(\cG,\C^2)}\Vert\psi'\Vert^{\frac{1}{2}}_{L^{2}(\cG,\C^2)},\qquad\forall \psi\in H^1(\cG,\C^2).
\ee

\begin{lemma}\label{h1bound}
Let $p\in(2,6)$. Under the assumptions \eqref{cdiverge}, \eqref{eq-smaller} and \eqref{parameterconvergence}, the sequence $(\psi_n)$ is bounded in $H^{1}(\cG,\C^2)$ (uniformly with respect to $n$).
\end{lemma}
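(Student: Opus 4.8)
The plan is to estimate $\|\psi_n'\|_{L^2(\cG,\C^2)}$ separately on the half-lines and on the compact core $\cK$. On a half-line $e$ the nonlinearity is absent, so \eqref{explicithalflines} shows that $\psi_{n,e}$ is a constant spinor times $e^{-\kappa_n x_e}$ with $\kappa_n:=\sqrt{m^2c_n^4-\omega_n^2}/c_n$; hence $\|\psi_{n,e}'\|_{L^2}=\kappa_n\|\psi_{n,e}\|_{L^2}$, and since $m^2c_n^4-\omega_n^2=(mc_n^2-\omega_n)(mc_n^2+\omega_n)\leq 2mc_n^2(mc_n^2-\omega_n)$ is, by \eqref{eq-smaller} and \eqref{frequency}, $\leq Cc_n^2$, the sequence $(\kappa_n)$ is bounded. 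Combined with Lemma \ref{l2bound} this already gives $\|\psi_n'\|_{L^2(\cG\setminus\cK,\C^2)}\leq C$, so it remains to bound $\|\psi_n'\|_{L^2(\cK,\C^2)}$.

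On $\cK$ each component is, edge by edge, of class $H^2$, and \eqref{eq-NLDbound} rewrites as the first-order system
\[
(\psi_{n,e}^1)'=\frac{\imath}{c_n}\big(\omega_n+mc_n^2+|\psi_{n,e}|^{p-2}\big)\psi_{n,e}^2,\qquad (\psi_{n,e}^2)'=\frac{\imath}{c_n}\big(\omega_n-mc_n^2+|\psi_{n,e}|^{p-2}\big)\psi_{n,e}^1 .
\]
The first relation is useless for $(\psi_n^1)'$, whose coefficient is of order $c_n$; solved for $\psi_{n,e}^2$ it gives instead $\|\psi_n^2\|_{L^2(\cK)}\leq\frac{1}{mc_n}\|(\psi_n^1)'\|_{L^2(\cK)}$, so controlling $(\psi_n^1)'$ through $\psi_n^2$ and $\psi_n^2$ through $(\psi_n^1)'$ is a vicious circle. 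The crucial step is therefore to differentiate the first relation and insert the second, which produces a \emph{Schr\"odinger-type} equation on $\cK$,
\[
-(\psi_{n,e}^1)''=\Big(2m\mu_{n,e}+\frac{\mu_{n,e}^2}{c_n^2}\Big)\psi_{n,e}^1-\frac{\imath}{c_n}\big(|\psi_{n,e}|^{p-2}\big)'\,\psi_{n,e}^2,\qquad \mu_{n,e}:=(\omega_n-mc_n^2)+|\psi_{n,e}|^{p-2},
\]
in which the dangerous factor of order $c_n$ has cancelled, being absorbed into the identity $(\omega_n+mc_n^2+|\psi|^{p-2})(\omega_n-mc_n^2+|\psi|^{p-2})/c_n^2=2m\mu_{n,e}+\mu_{n,e}^2/c_n^2$.

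I would then test this identity against $\psi_n^1$ on $\cK$ and integrate by parts. The leading contribution $2m\int_\cK\mu_{n,e}|\psi_n^1|^2=2m(\omega_n-mc_n^2)\|\psi_n^1\|_{L^2(\cK)}^2+2m\int_\cK|\psi_n|^{p-2}|\psi_n^1|^2$ is bounded thanks to \eqref{frequency} and Lemmas \ref{lpbound}--\ref{l2bound} (the second term being $\leq 2m\|\psi_n\|_{L^p(\cK,\C^2)}^p$). Every remaining term carries a factor $c_n^{-1}$ or $c_n^{-2}$ and, via the Gagliardo--Nirenberg inequalities \eqref{eq-gnp}--\eqref{gninfinito} and the uniform bounds on $\|\psi_n\|_{L^2(\cG,\C^2)}$, $\|\psi_n\|_{L^p(\cK,\C^2)}$ (together with $\|\psi_n^2\|_{L^2(\cK)}\leq\frac{1}{mc_n}\|(\psi_n^1)'\|_{L^2(\cK)}$ for the worst of them), is bounded by $c_n^{-1}$, resp.\ $c_n^{-2}$, times a power of $\|\psi_n'\|_{L^2(\cK,\C^2)}$, the largest such power being $(p+2)/2<4$. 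The vertex boundary terms are handled using the Kirchhoff-type conditions \eqref{eq-kirchtype1}--\eqref{eq-kirchtype2} and the half-line formula \eqref{explicithalflines}: continuity of $\psi_n^1$ and the balance of $\psi_n^2$ turn the a priori most singular part of the boundary sum into the nonnegative quantity $\kappa_n\sum_\vv(\#\{\text{half-lines at }\vv\})|\psi_n^1(\vv)|^2$, which can simply be discarded, leaving once more $\mathcal{O}(c_n^{-1})$ times a power of $\|\psi_n'\|$ smaller than $4$.

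Collecting the two regions one arrives at $\|\psi_n'\|_{L^2(\cG,\C^2)}^2\leq C+\varepsilon_n\|\psi_n'\|_{L^2(\cG,\C^2)}^\beta$ with $\varepsilon_n\to 0$ and $2\leq\beta<4$ (one may, for transparency, separate the ranges $p\in(2,4]$ and $p\in(4,6)$). This is not yet enough, since $x\mapsto C+\varepsilon_n x^\beta-x^2$ has, besides a bounded root, a large one of size $\sim\varepsilon_n^{-1/(\beta-2)}\sim c_n^{4/(p-2)}$. To discard the large branch I would use the crude a priori bound $\|\psi_n'\|_{L^2(\cG,\C^2)}\leq Cc_n$, which follows from $\|\cD_n\psi_n\|_{L^2}^2=c_n^2\|\psi_n'\|_{L^2}^2+m^2c_n^4\|\psi_n\|_{L^2}^2+2\,\mathrm{Re}\langle-\imath c_n\sigma_1\psi_n',mc_n^2\sigma_3\psi_n\rangle$ together with $\|\cD_n\psi_n\|_{L^2}\leq\omega_n\|\psi_n\|_{L^2}+\|\psi_n\|_{L^{2(p-1)}(\cK)}^{p-1}$, Lemmas \ref{lpbound}--\ref{l2bound}, \eqref{frequency} and \eqref{gninfinito}: since $p<6$ we have $4/(p-2)>1$, so for $n$ large $c_n^{4/(p-2)}\gg Cc_n$ and $\|\psi_n'\|_{L^2}$ is forced into the bounded branch, whence the uniform $H^1$-bound. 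The main obstacle is exactly the reciprocal $\mathcal{O}(c_n)$/$\mathcal{O}(c_n^{-1})$ coupling between $(\psi_n^1)'$ and $\psi_n^2$, which makes any estimate based only on the first-order equations circular and forces the second-order reformulation for $\psi_n^1$; the accompanying book-keeping of the Gagliardo--Nirenberg exponents (delicate when $p$ is close to $6$) and the use of the crude a priori bound to kill the large root are the technical core of the lemma.
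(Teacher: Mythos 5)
Your argument is correct in outline, but it takes a long detour where the paper's proof is a one\--stroke energy identity, and in fact the paper's entire proof is hiding inside your auxiliary ``crude a priori bound''. The paper simply computes $\Vert\cD_n\psi_n\Vert_{L^2}^2$ in two ways: on one hand, as in \eqref{quadrato}, $\Vert\cD_{n}\psi_{n}\Vert^{2}_{L^{2}}=c^{2}_{n}\Vert\psi_{n}'\Vert^{2}_{L^{2}}+m^{2}c^{4}_{n}\Vert\psi_{n}\Vert^{2}_{L^{2}}$ \emph{with no cross term} (the cross term is $2mc_n^3\,\mathrm{Im}\int_\cG(\overline{\psi_n^2}\psi_n^1)'\dx$, which vanishes by \eqref{eq-kirchtype1}\--\eqref{eq-kirchtype2} and the decay at infinity); on the other hand, from the equation, as $\Vert\omega_n\psi_n+\chi_{_\cK}|\psi_n|^{p-2}\psi_n\Vert_{L^2}^2$, whose non\-quadratic pieces are controlled by $C(1+\omega_n)\Vert\psi_n'\Vert_{L^2}^{p/2-1}$ via \eqref{eq-gnp}\--\eqref{gninfinito} and Lemmas \ref{lpbound}\--\ref{l2bound}. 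Since $\omega_n^2\leq m^2c_n^4$ the mass terms cancel and one gets $\Vert\psi_n'\Vert_{L^2}^{(6-p)/2}\leq C$ directly. You perform exactly this computation, but because you retain the cross term you only extract $\Vert\psi_n'\Vert_{L^2}\leq Cc_n$, and are then forced into the second-order reformulation on $\cK$, the vertex boundary-term analysis, the sub\-quartic power inequality and the selection of the bounded branch. None of that is wrong \--- your half\--line estimate via \eqref{explicithalflines}, the Schr\"odinger\--type equation for $\psi_n^1$ (which is essentially the paper's own \eqref{PSremainder}, used later for Lemmas \ref{lem-nonzero} and \ref{lem-PSNLS}), and the branch\--selection argument all go through for $p\in(2,6)$ \--- but two points deserve care if you keep this route: the sign of the leading vertex term (your claim that it reduces, via \eqref{eq-kirchtype2} and \eqref{explicithalflines}, to a discardable nonnegative multiple of $|\psi_n^1(\vv)|^2$ must be checked against the orientation convention in the integration by parts), and, for $p<4$, the differentiation of $|\psi_n|^{p-2}$ requires the same justification the paper supplies when deriving \eqref{PSremainder} (rewriting so that the derivative falls on a quantity known to lie in $L^2$ edge by edge). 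The moral: once you know the cross term vanishes, your ``crude'' bound is already the sharp one.
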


\begin{proof}
First, recall that, since $\psi_n$ are bound states they satisfy (edge by edge)
\be\label{equazionen}
\cD_{n}\psi_{n}=\omega_{n}\psi_{n}+\chi_{_\cK}\vert\psi_{n}\vert^{p-2}\psi_{n}.
\ee
The $L^{2}(\cG,\C^2)$-norm squared of the right-hand side of \eqref{equazionen} reads
\be\label{destraquadrato}
\Vert\omega_{n}\psi_{n}+\kappa\vert\psi_{n}\vert^{p-2}\psi_{n}\Vert^{2}_{L^{2}(\cG,\C^2)}=\omega^{2}_{n}\Vert\psi_{n}\Vert^{2}_{L^{2}(\cG,\C^2)}+\int_{\cK}\vert\psi_{n}\vert^{2(p-1)}\dx+2\omega_{n}\int_{\cK}\vert\psi_{n}\vert^{p}\dx.
\ee
Let us estimate the last two integrals. Using \eqref{gninfinito}, Lemma \ref{lpbound} and Lemma \ref{l2bound}, we get
\be\label{primastima}
\begin{split}
\int_{\cK}\vert\psi_{n}\vert^{2(p-1)}&=\int_{\cK}\vert\psi_{n}\vert^{p+(p-2)}\dx\leq\Vert\psi_{n}\Vert^{p-2}_{L^{\infty}(\cG,\C^2)}\int_{\cK}\vert\psi_{n}\vert^{p}\dx\\[.2cm]
&\leq C^{p-2}_{\infty}\Vert\psi_{n}\Vert^{\frac{p}{2}+1}_{L^{2}(\cG,\C^2)}\Vert\psi_{n}'\Vert^{\frac{p}{2}-1}_{L^{2}(\cG,\C^2)}\leq C\Vert\psi_{n}'\Vert^{\frac{p}{2}-1}_{L^{2}(\cG,\C^2)}.
\end{split}
\ee
On the other hand, by \eqref{eq-gnp} and Lemma \ref{l2bound}
\be\label{secondastima}
\int_{\cK}\vert\psi_{n}\vert^{p}\dx\leq\int_{\cG}\vert\psi_{n}\vert^{p}\dx\leq C_{p}\Vert\psi_{n}\Vert^{\frac{p}{2}+1}_{L^{2}(\cG,\C^2)}\Vert\psi_{n}'\Vert^{\frac{p}{2}-1}_{L^{2}(\cG,\C^2)}\leq C\Vert\psi_{n}'\Vert^{\frac{p}{2}-1}_{L^{2}(\cG,\C^2)}.
\ee
Since an easy computation shows that
\be\label{quadrato}
\Vert\cD_{n}\psi_{n}\Vert^{2}_{L^{2}(\cG,\C^2)}=c^{2}_{n}\Vert\psi_{n}'\Vert^{2}_{L^{2}(\cG,\C^2)}+m^{2}c^{4}_{n}\Vert\psi_{n}\Vert^{2}_{L^{2}(\cG,\C^2)},
\ee
combining \eqref{destraquadrato}, \eqref{primastima}, \eqref{secondastima} and \eqref{quadrato}, we obtain that
\[
c^{2}_{n}\Vert\psi_{n}'\Vert^{2}_{L^{2}(\cG,\C^2)}+m^{2}c^{4}_{n}\Vert\psi_{n}\Vert^{2}_{L^{2}(\cG,\C^2)}\leq\omega^{2}_{n}\Vert\psi_{n}\Vert^{2}_{L^{2}(\cG,\C^2)}+ C(1+\omega_{n})\Vert\psi_{n}'\Vert^{\frac{p}{2}-1}_{L^{2}(\cG,\C^2)},
\]
so that, from a repeated use of \eqref{cdiverge} and \eqref{eq-smaller},
\[
 \Vert\psi_{n}'\Vert^{\frac{6-p}{2}}_{L^{2}(\cG,\C^2)}\leq C m.
\]
Hence, the claim follows by the assumption $p<6$.
\end{proof}
\begin{remark}
The above results also hold if \eqref{parameterconvergence} is replaced by the weaker assumption \eqref{frequency}.
\end{remark}


\subsection{Passage to the limit}

The last step consists in proving that the first components of the sequence of bound states $(\psi_n)$ converge to a bound state of the NLSE, while the second components converge to zero.

For the sake of simplicity we assume throughout that the parameters $p$ and $\lambda$ are fixed and fulfill
\[
 p\in(2,6)\qquad \text{and}\qquad \lambda<0.
\]
In addition, we set
\[
u_n:=\psi_{n}^1\qquad\text{and}\qquad v_n:=\psi_n^2,\qquad \forall n\in\mathbb{N},
\]
and, given the two sequences $(c_n)$ and $(\omega_n)$ introduced in the previous section (which satisfy \eqref{cdiverge}, \eqref{eq-smaller},\eqref{parameterconvergence} and \eqref{frequency}), we define
\[
 a_n:=(mc_n^2-\omega_n)b_n\qquad\text{and}\qquad b_n:=\f{mc_n^2+\omega_n}{c_n^2},\qquad \forall n\in\mathbb{N}.
\]
Clearly, \eqref{frequency} implies that
\begin{equation}
\label{eq-bn}
 b_n\longrightarrow2m,\qquad\text{as}\quad n\to\infty
\end{equation}
while \eqref{parameterconvergence} gives
\begin{equation}
 \label{eq-an}
 a_n\longrightarrow-\lambda,\qquad\text{as}\quad n\to\infty.
\end{equation}

We also recall that a function $w:\cG\to\C$ is a bound state of the NLSE with fixed frequency $\lambda$ and $\alpha=2m$ if and only if it is a critical point of the $C^2$ functional $J:H\to\R$ defined by
\[
 J(w):=\f{1}{2}\int_\cG|w'|^2\dx-\frac{2m}{p}\int_\cK|w|^p\dx-\f{\lambda}{2}\int_\cG|w|^2\dx,
\]
where
\[
 H:=\{w\in H^1(\cG):\text{\eqref{eq-kirch1} holds}\}
\]
with the norm induced by $H^1(\cG)$ (this can be easily proved arguing as in \cite[Proposition 3.3]{AST-CVPDE}). It is also worth mentioning that a Palais-Smale sequence for $J$ is a sequence $(w_n)\subset H$ such that $dJ(w_n)\to0$ in $H^*$, namely
\[
 \sup_{\|\phi\|_{H}\leq 1}\langle dJ(w_n)|\phi\rangle\to0,\qquad\text{as}\quad n\to\infty.
\]
Furthermore, there holds the following property.

\begin{lemma}
\label{lem-equiv}
 Let $(w_n)$ be a bounded sequence in $H$ and, for every $n$, define the linear functional $A_n(w_n):H\to\R$
 \[
  \langle A_n(w_n)|\phi\rangle:=\int_\cG w_n'\overline{\phi}'\dx-b_n\int_\cK|w_n|^{p-2}w_n\overline{\phi}\dx+a_n\int_\cG w_n\overline{\phi}\dx.
 \]
 Then, $(w_n)$ is a Palais-Smale sequence for $J$ if and only if
 \begin{equation}
 \label{eq-psaux}
  \sup_{\|\phi\|_{H}\leq 1}\langle A_n(w_n)|\phi\rangle\to0,\qquad\text{as}\quad n\to\infty.
 \end{equation}
\end{lemma}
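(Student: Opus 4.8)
The plan is to compare $\langle dJ(w_n)|\phi\rangle$ with $\langle A_n(w_n)|\phi\rangle$ directly and show their difference tends to zero uniformly over the unit ball of $H$, exploiting the convergences \eqref{eq-bn} and \eqref{eq-an} together with the boundedness of $(w_n)$ in $H$. First I would write out
\[
 \langle dJ(w_n)|\phi\rangle=\int_\cG w_n'\overline{\phi}'\dx-2m\int_\cK|w_n|^{p-2}w_n\overline{\phi}\dx-\lambda\int_\cG w_n\overline{\phi}\dx,
\]
so that the difference of the two functionals is
\[
 \langle (dJ(w_n)-A_n(w_n))|\phi\rangle=(b_n-2m)\int_\cK|w_n|^{p-2}w_n\overline{\phi}\dx-(\lambda+a_n)\int_\cG w_n\overline{\phi}\dx.
\]
Then I would estimate each term. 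For the second, Cauchy--Schwarz gives $|\int_\cG w_n\overline{\phi}\dx|\leq\|w_n\|_{L^2(\cG)}\|\phi\|_{L^2(\cG)}\leq\|w_n\|_{H}\|\phi\|_{H}$, which is bounded uniformly in $n$ by the hypothesis, so multiplying by $|\lambda+a_n|\to0$ (by \eqref{eq-an}) yields $o(1)$ uniformly in $\phi$ with $\|\phi\|_{H}\leq1$. For the first, H\"older's inequality (with exponents $\f{p}{p-1}$ and $p$) gives $|\int_\cK|w_n|^{p-2}w_n\overline{\phi}\dx|\leq\|w_n\|_{L^p(\cK)}^{p-1}\|\phi\|_{L^p(\cK)}$, and the Sobolev embedding $H^1(\cG)\hookrightarrow L^p(\cG)$ (valid for all $p\geq2$, hence in particular on $\cK$) bounds both factors by a constant times powers of $\|w_n\|_{H}$ and $\|\phi\|_{H}$; multiplying by $|b_n-2m|\to0$ (by \eqref{eq-bn}) again produces $o(1)$ uniformly over the unit ball. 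Taking the supremum over $\|\phi\|_{H}\leq1$ then shows
\[
 \sup_{\|\phi\|_{H}\leq1}\big|\langle dJ(w_n)|\phi\rangle-\langle A_n(w_n)|\phi\rangle\big|\longrightarrow0,
\]
and the equivalence \eqref{eq-psaux} follows at once from the triangle inequality: if one of the two suprema tends to zero, so does the other.

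I do not anticipate a serious obstacle here; the statement is essentially a perturbation lemma, and the only thing to be careful about is that all estimates are \emph{uniform} in $\phi$ over the unit ball of $H$ (so that one genuinely controls the dual norms), which the H\"older/Sobolev bounds above do provide since the $\phi$-dependence is always through $\|\phi\|_{H}$ alone. The boundedness of $(w_n)$ in $H$ is used precisely to keep the prefactors $\|w_n\|_{L^2(\cG)}$ and $\|w_n\|_{L^p(\cK)}^{p-1}$ under control while the scalar coefficients $b_n-2m$ and $\lambda+a_n$ carry the convergence to zero.
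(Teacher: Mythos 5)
Your proposal is correct and follows essentially the same route as the paper, which simply notes the identity $\langle A_n(w_n)-dJ(w_n)|\phi\rangle=-(b_n-2m)\int_\cK|w_n|^{p-2}w_n\overline{\phi}\dx+(a_n+\lambda)\int_\cG w_n\overline{\phi}\dx$ and invokes \eqref{eq-bn}, \eqref{eq-an} and the boundedness of $(w_n)$; you merely spell out the Cauchy--Schwarz and H\"older/Sobolev estimates that make the uniformity over the unit ball explicit. Incidentally, your coefficient $(b_n-2m)$ is the right one (the paper's displayed formula has a typo reading $(b_n-m)$), and since $b_n\to2m$ the argument goes through exactly as you describe.
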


\begin{proof}
 The proof is trivial noting that
 \[
  \langle A_n(w_n)-dJ(w_n)|\phi\rangle=-(b_n-m)\int_\cK|w_n|^{p-2}w_n\overline{\phi}\dx+(a_n+\lambda)\int_\cG w_n\overline{\phi}\dx
 \]
and exploiting \eqref{eq-bn}, \eqref{eq-an} and the fact that $(w_n)$ is bounded in $H$.
\end{proof}

The strategy to prove Theorem \ref{thm-limit} is the following:
\begin{itemize}
 \item[(i)] prove that the sequence $(v_n)$ converges to $0$ in $H^1(\cG)$;\\[-.35cm]
 \item[(ii)] prove that the sequence $(u_n)$ is bounded away from zero in $H^1(\cG)$;\\[-.35cm]
 \item[(iii)] prove that the sequence $(u_n)$ satisfies \eqref{eq-psaux}, as by Lemmas \ref{h1bound} and \ref{lem-equiv}, this entails that it is a Palais-Smale sequence for $J$;\\[-.35cm]
 \item[(iv)] prove that the sequence $(u_n)$ converges (up to subsequences) in $H$ to a function $u$, which is then a bound state of the NLSE with frequency $\lambda<0$.
\end{itemize}

We observe that we always tacitly use in the following the fact that, since each $\psi_n$ is a bound state of the NLDE, then $u_n\in H$, whereas $v_n\not\in H$, but satisfies \eqref{eq-kirchtype2}. In addition, we highlight that, in the sequel, we often use a ``formal'' commutation between the differential operator $(\cdot)'$ and $\chi_{_\cK}$. Clearly, this is just a compact notation (which avoids tedious edge by edge computations) that simply recalls the different form of the NLDE on the bounded edges due to the presence of the localized nonlinearity.

\medskip
As a first step, we prove item (i). As a byproduct of the proof, we also find an estimate of the speed of convergence of $(v_n)$.

\begin{lemma}
\label{lem-vnzero}
 The sequence $(v_n)$ converges to $0$ in $H^1(\cG)$ as $n\to\infty$. More precisely, there holds 
 \be\label{h10}
 \Vert v_{n}\Vert_{H^{1}(\cG)}=\cO\left(\frac{1}{c_{n}} \right),\qquad\text{as}\quad n\to\infty.
 \ee
\end{lemma}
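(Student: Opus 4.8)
The idea is to write down the equation satisfied by $v_n=\psi_n^2$ on each edge and extract an estimate for $\|v_n\|_{L^2(\cG)}$ and $\|v_n'\|_{L^2(\cG)}$ directly in terms of $1/c_n$. Recall from Lemma~\ref{l2bound} that $\|\psi_n^-\|_{L^2(\cG,\C^2)}=\cO(1/c_n)$ and from Lemmas~\ref{lpbound}, \ref{l2bound}, \ref{h1bound} that $(\psi_n)$ is bounded in $H^1(\cG,\C^2)$ and in $L^p(\cK,\C^2)$, uniformly in $n$; these are the only facts I would need as input. Writing the system \eqref{eq-NLDbound} componentwise (with $c=c_n$), the equation for $v_n$ reads
\[
 -\imath c_n v_n' + mc_n^2 u_n - \chi_{_\cK}|\psi_n|^{p-2}u_n = \omega_n u_n,
\]
wait — I should be careful about which line is which: the first component of $\cD_e\psi_e$ is $-\imath c\,(\psi_e^2)' + mc^2\psi_e^1$ and the second is $-\imath c\,(\psi_e^1)' - mc^2\psi_e^2$. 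So the second line of \eqref{eq-NLDbound} gives
\[
 -\imath c_n u_n' - mc_n^2 v_n - \chi_{_\cK}|\psi_n|^{p-2}v_n = \omega_n v_n,
\]
i.e. $(mc_n^2+\omega_n)v_n = -\imath c_n u_n' - \chi_{_\cK}|\psi_n|^{p-2}v_n$, equivalently
\[
 v_n = \frac{-\imath c_n u_n' - \chi_{_\cK}|\psi_n|^{p-2}v_n}{mc_n^2+\omega_n}.
\]

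From here the $L^2$-estimate is immediate: since $mc_n^2+\omega_n \sim 2mc_n^2 \to \infty$, and $\|u_n'\|_{L^2}$, $\|v_n\|_{L^\infty}$ (by \eqref{gninfinito} and the $H^1$-bound), $\|v_n\|_{L^2}$ are all uniformly bounded, one gets $\|v_n\|_{L^2(\cG)} \leq \frac{c_n\|u_n'\|_{L^2} + C\|v_n\|_{L^2}}{mc_n^2+\omega_n} = \cO(1/c_n)$, absorbing the lower-order $\|v_n\|_{L^2}$ term into the left-hand side for $n$ large. (This also re-derives the $L^2$ part of Lemma~\ref{l2bound}.) For the derivative bound, I would differentiate the relation for $v_n$, or rather use the first line of \eqref{eq-NLDbound}, namely $-\imath c_n v_n' = (\omega_n - mc_n^2)u_n + \chi_{_\cK}|\psi_n|^{p-2}u_n$, so that
\[
 v_n' = \frac{\imath(mc_n^2-\omega_n)u_n - \imath\chi_{_\cK}|\psi_n|^{p-2}u_n}{c_n}.
\]
Now $mc_n^2-\omega_n$ is \emph{bounded} by \eqref{frequency}, $\|u_n\|_{L^2}$ is bounded, and $\|\,|\psi_n|^{p-2}u_n\|_{L^2(\cK)} \leq \|\psi_n\|_{L^\infty(\cG,\C^2)}^{p-2}\|u_n\|_{L^2} \leq C$ again by \eqref{gninfinito} and Lemma~\ref{h1bound}; hence $\|v_n'\|_{L^2(\cG)} \leq \frac{C}{c_n} = \cO(1/c_n)$. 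Combining the two estimates gives $\|v_n\|_{H^1(\cG)} = \cO(1/c_n)$, which is \eqref{h10} and in particular forces $v_n \to 0$ in $H^1(\cG)$.

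The computation is routine; the only point requiring a little care is bookkeeping which component of the Dirac system is which (so that the factor multiplying $v_n'$ is the \emph{bounded} quantity $mc_n^2-\omega_n$ and not $mc_n^2+\omega_n$) — this is precisely what makes the derivative estimate work out with a gain of $1/c_n$ rather than no gain at all. One should also double-check that the nonlinear term $\chi_{_\cK}|\psi_n|^{p-2}u_n$ is controlled in $L^2$: since $p\in(2,6)$, $p-2\in(0,4)$ and $\||\psi_n|^{p-2}u_n\|_{L^2(\cK)}\leq \|\psi_n\|^{p-2}_{L^\infty(\cG,\C^2)}\|u_n\|_{L^2(\cG)}$, and both factors are bounded by the previous lemmas, so this is indeed fine. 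No compactness argument is needed here — the statement is a direct consequence of the equation together with the already-established uniform bounds.
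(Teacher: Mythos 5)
Your argument is correct and is essentially identical to the paper's proof: both isolate the two component equations, obtain $\Vert v_n'\Vert_{L^2}=\cO(1/c_n)$ from the first line using the boundedness of $mc_n^2-\omega_n$ (i.e.\ \eqref{frequency}) together with the uniform $H^1$-bound of Lemma \ref{h1bound}, and obtain $\Vert v_n\Vert_{L^2}=\cO(1/c_n)$ from the second line after dividing by $mc_n^2+\omega_n\sim 2mc_n^2$. The only cosmetic difference is that the paper controls the nonlinear term by dividing the second equation by $c_n^2$ rather than absorbing it into the left-hand side, and note that your parenthetical remark about re-deriving Lemma \ref{l2bound} is slightly imprecise, since that lemma concerns the spectral projections $\psi_n^{\pm}$ rather than the components.
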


\begin{proof}
 As $(\psi_n)$ is a bound state of the NLDE, rewriting the equation in terms of its components,
 \begin{gather}
\label{eqcomponenti1}
\displaystyle -\imath c_{n}v_{n}'+(mc^{2}_{n}-\omega_{n})u_{n}=\chi_{_\cK}(\vert u_{n}\vert^{2}+\vert v_{n}\vert^{2})^{\frac{p-2}{2}}u_{n}, \\[.3cm]
\label{eqcomponenti2}
\displaystyle -\imath c_{n}u_{n}'-(mc^{2}_{n}+\omega_{n})v_{n}=\chi_{_\cK}(\vert u_{n}\vert^{2}+\vert v_{n}\vert^{2})^{\frac{p-2}{2}}v_{n}.
\end{gather}
Dividing \eqref{eqcomponenti1} by $c_{n}$ and using \eqref{frequency} and Lemma \ref{h1bound}, we have
\be\label{gradiente0}
\Vert v_{n}'\Vert_{L^{2}(\cG)}=\cO\left(\frac{1}{c_{n}}\right).
\ee
On the other hand, dividing \eqref{eqcomponenti2} by $c^{2}_{n}$ and using again Lemma \ref{h1bound}, there results
\[
\left\Vert-\frac{\imath}{c_{n}} u_{n}'-\frac{(mc^{2}_{n}+\omega_{n})}{c^{2}_{n}}v_{n}\right\Vert_{L^{2}(\cG)}=\cO\left( \frac{1}{c^{2}_{n}}\right)
\]
and hence
\[
\frac{(mc^{2}_{n}+\omega_{n})}{c^{2}_{n}}\Vert v_{n}\Vert_{L^{2}(\cG)}\leq \left\Vert-\frac{\imath}{c_{n}}u_{n}'-\frac{(mc^{2}_{n}+\omega_{n})}{c^{2}_{n}}v_{n}\right\Vert_{L^{2}(\cG)}+\frac{1}{c_{n}}\left\Vert u_{n}'\right\Vert_{L^{2}(\cG)}= \cO\left(\frac{1}{c_{n}} \right).
\]
Finally, combining with \eqref{gradiente0}, one obtains \eqref{h10}.
\end{proof}

Item (ii) requires some further effort.

\begin{lemma}
\label{lem-nonzero}
There exists $\mu>0$ such that
\be\label{lowerboundH1}
\inf_{n\in\mathbb{N}}\Vert u_{n}\Vert_{H^{1}(\cG)}\geq\mu>0.
\ee
\end{lemma}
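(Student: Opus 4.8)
The plan is to argue by contradiction. Suppose that, along a subsequence (not relabelled), $\|u_n\|_{H^1(\cG)}\to 0$. By Lemma \ref{lem-vnzero} we already know $\|v_n\|_{H^1(\cG)}\to 0$, hence $\|\psi_n\|_{H^1(\cG,\C^2)}\to 0$ as well. The strategy is then to show that this forces the minimax levels $\alpha^n_N$ to tend to $0$, contradicting the lower bound $\alpha^n_N\geq\rho>0$ established in Corollary \ref{genusminmax} (which, as noted at the start of Section \ref{sec-limit}, holds uniformly in $n$ with the very same $\rho$, since $\rho$ depends only on $p$ and on the bound $|\omega_n|<mc_n^2$, not on $c_n$).

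First I would use the identity, valid because $\psi_n$ is a critical point of $\cL_n$,
\[
 \alpha^n_N=\cL_n(\psi_n)-\tfrac12\langle d\cL_n(\psi_n),\psi_n\rangle=\Big(\tfrac12-\tfrac1p\Big)\int_\cK|\psi_n|^p\dx,
\]
which was already derived in the proof of Lemma \ref{lpbound}. Thus it suffices to show $\int_\cK|\psi_n|^p\dx\to 0$. Since $\cK$ is compact, the embedding $H^1(\cG,\C^2)\hookrightarrow L^p(\cK,\C^2)$ is continuous (indeed compact), so $\|\psi_n\|_{H^1(\cG,\C^2)}\to 0$ immediately gives $\int_\cK|\psi_n|^p\dx\to 0$, whence $\alpha^n_N\to 0$. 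This contradicts $\alpha^n_N\geq\rho>0$, and the contradiction proves \eqref{lowerboundH1}.

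The point requiring care is the uniformity of the lower bound $\rho$ in $n$: one must check that Lemma \ref{positivesphere}, and hence Corollary \ref{genusminmax}, produces a constant $\rho>0$ independent of $c_n$. Inspecting the proof of Lemma \ref{positivesphere}, on $S^+_r$ one has $\cL_n(\psi)=\tfrac12\|\psi^+\|^2_{\omega_n}-\tfrac1p\int_\cK|\psi|^p\dx$ with $\|\psi^+\|_{\omega_n}=r$, and the Sobolev embedding \eqref{sobolev} gives $\int_\cK|\psi|^p\dx\leq C\|\psi\|_\cdot^p$ with a constant that can be taken independent of $n$ (because the $\|\cdot\|_{\omega_n}$-norm is uniformly equivalent to the $H^{1/2}$-norm on $\cK$ thanks to \eqref{frequency}); choosing $r$ small then yields $\inf_{S^+_r}\cL_n\geq\rho>0$ uniformly. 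Once this uniformity is in place, the rest of the argument is the short contradiction sketched above; I expect the bookkeeping on the $n$-independence of the constants — rather than any genuinely new estimate — to be the only real obstacle, and it is handled by \eqref{frequency} together with the $n$-independence of the relevant Sobolev constants on the fixed compact core $\cK$.
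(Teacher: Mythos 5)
Your strategy is genuinely different from the paper's: the paper never invokes the minimax levels here, but instead substitutes \eqref{eq-comp2rew} into \eqref{eq-comp1rew} to get a second-order equation for $u_n$, multiplies by $\overline{u}_n$, integrates by parts, and shows that every term except $\Vert u_n\Vert_{H^1(\cG)}^2$ is $o\big(\Vert u_n\Vert_{H^1(\cG)}^2\big)$, which is a self-contained contradiction. Your route, via $\rho\leq\alpha_N^n=\big(\tfrac12-\tfrac1p\big)\int_\cK|\psi_n|^p\dx\to0$, would be shorter \emph{if} the constant $\rho$ of Lemma \ref{positivesphere} could be taken independent of $n$ — and you correctly identify this as the crux — but the justification you give for that uniformity does not hold, and this is a genuine gap rather than bookkeeping.

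Concretely: the claim that $\Vert\cdot\Vert_{\omega_n}$ is \emph{uniformly equivalent} to the $H^{1/2}$-norm is false. On the positive spectral subspace one has, at least formally, $\cD_n-\omega_n=\big(\cD_n^2-\omega_n^2\big)\big(|\cD_n|+\omega_n\big)^{-1}$ with $\cD_n^2=-c_n^2\tfrac{d^2}{dx^2}+m^2c_n^4$, so on fixed smooth spinors $\Vert\psi\Vert_{\omega_n}^2\to\tfrac1{2m}\Vert\psi'\Vert_{L^2}^2+(mc_n^2-\omega_n)\Vert\psi\Vert_{L^2}^2$, an $H^1$-type quantity: the upper bound by the $H^{1/2}$-norm degenerates as $c_n\to\infty$. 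Only the one-sided estimate $\Vert\psi\Vert_{H^{1/2}}\lesssim\Vert\psi\Vert_{\omega_n}$ on $Y^+_{\omega_n}$, uniformly in $n$, is what you actually need; it is plausible, but it is nowhere established in the paper and cannot be read off from \eqref{sobolev} plus \eqref{frequency} — on a metric graph there is no Fourier transform, and $Y^+_{\omega_n}$ itself moves with $n$. Worse, with the paper's own normalization the uniformity genuinely fails: $S_r^+$ is the sphere in the norm $\Vert\psi\Vert=\Vert\sqrt{|\cD_n|}\psi\Vert_{L^2}$, and on $Y^+$ one only gets $\cL_n(\psi)\geq\tfrac12\big(1-\tfrac{\omega_n}{mc_n^2}\big)r^2-\tfrac1p\int_\cK|\psi|^p\dx$, whose quadratic coefficient is $O(c_n^{-2})\to0$ by \eqref{parameterconvergence}; so the $\rho$ produced by the proof of Lemma \ref{positivesphere}, as written, tends to $0$. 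To salvage your argument you would have to re-run the whole linking construction (Lemmas \ref{testlinking}--\ref{genusintersezione} and Corollary \ref{genusminmax}) with spheres taken in the $\Vert\cdot\Vert_{\omega_n}$-norm and prove the uniform embedding $\Vert\psi\Vert_{L^p(\cK,\C^2)}\leq C\Vert\psi\Vert_{\omega_n}$ on $Y^+_{\omega_n}$ with $C$ independent of $n$. That is a real piece of additional work, which the paper's direct energy argument deliberately avoids.
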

\begin{proof} 
 Assume, by contradiction, that \eqref{lowerboundH1} does not hold, namely that, up to subsequences,
 \be\label{H1zero}
 \lim_{n\rightarrow\infty}\Vert u_{n}\Vert_{H^{1}(\cG)}=0.
 \ee
 
 Dividing by $c_n$ and rearranging terms, \eqref{eqcomponenti1} yields
\be
\label{eq-comp1rew}
-\imath v'_{n}=\frac{1}{c_{n}}\left[\chi_{_\cK}\left(\vert u_{n}\vert^{2}+\vert v_{n}\vert^{2} \right)^{\frac{p-2}{2}}+(\omega_{n}-mc^{2}_{n}) \right]u_{n},
\ee
and then, using \eqref{frequency}, we find
\be\label{vouL2}
\int_{\cG}\vert v'_{n}\vert^{2}\dx\lesssim\frac{1}{c^{2}_{n}}\int_{\cG}\vert u_{n}\vert^{2}\dx.
\ee
Moreover,  \eqref{eqcomponenti2} can be rewritten as
\be
\label{eq-comp2rew}
v_{n}\left(1+\chi_{_\cK}\frac{\left(\vert u_{n}\vert^{2}+\vert v_{n}\vert^{2}\right)^{\frac{p-2}{2}}}{mc^{2}_{n}+\omega_{n}} \right)=- \frac{\imath c_{n}}{mc^{2}_{n}+\omega_{n}}u'_{n}
\ee
and, since (again by \eqref{frequency}) $(mc^{2}_{n}+\omega_{n})\sim c^{2}_{n}$, there results
\[
\int_{\cG}\vert v_{n}\vert^{2}\dx\lesssim\frac{1}{c^{2}_{n}}\int_{\cG}\vert u'_{n}\vert^{2}\dx,
\]
so that, combining with \eqref{vouL2},
\be\label{vou}
\Vert v_{n}\Vert_{H^{1}(\cG)}\lesssim\frac{1}{c_{n}}\Vert u_{n}\Vert_{H^{1}(\cG)}.
\ee
Note that \eqref{eq-comp2rew} also shows that $u_n$ is of class $C^1$ on each edge.
 
 Now, plugging \eqref{eq-comp2rew} into \eqref{eq-comp1rew}, one obtains
 \be\label{PSremainder}
 -u_{n}''+a_nu_{n}=-\frac{\imath\chi_{_\cK}}{c_{n}}\left[\left(\vert u_{n}\vert^{2}+\vert v_{n}\vert^{2}\right)^{\f{p-2}{2}}v_{n} \right]'+\chi_{_\cK}b_n\left[\left(\vert u_{n}\vert^{2}+\vert v_{n}\vert^{2}\right)^{\f{p-2}{2}}u_{n}\right].
\ee
Clearly, \eqref{PSremainder} is to be meant in a distributional sense. However, observing that it can be written as
\[
 -\left[u_{n}'-\frac{\imath\chi_{_\cK}}{c_{n}}\left(\vert u_{n}\vert^{2}+\vert v_{n}\vert^{2}\right)^{\f{p-2}{2}}v_{n} \right]'=-a_nu_{n}+\chi_{_\cK}b_n\left[\left(\vert u_{n}\vert^{2}+\vert v_{n}\vert^{2}\right)^{\f{p-2}{2}}u_{n}\right],
\]
and that consequently the l.h.s. belongs to $L^2(\cG)$ and is continuous edge by edge (recalling also that $u_n$ is of class $C^1$ edge by edge by \eqref{eqcomponenti2}), the following multiplications by $\overline{u}_n$ and integrations (by parts) can be proved to be rigorous in the Lebesgue sense.

Therefore, multiplying \eqref{PSremainder} by $\overline{u}_{n}$ and integrating (by parts) over $\cG$, at the l.h.s. we obtain
\[
\int_{\cG}\vert u'_{n}\vert^{2}\dx+\underbrace{\sum_{\vv\in\cK}\left(\sum_{e\succ\vv}\overline{u}_{n,e}(\vv)\frac{du_{n,e}}{dx_e}(\vv) \right)}_{\mbox{boundary terms}}+a_n\int_{\cG}\vert u_{n}\vert^{2}\dx
\]
where we denote by $u_{n,e}$ (and $v_{n,e}$) the restriction of $u_{n}$ (and $v_{n}$) to the edge (represented by) $I_{e}$, and $\frac{d}{dx_e}$ is to be meant as in Definition \ref{defi-NLSE}. Using \eqref{eqcomponenti2} and the fact that $u_n$ is of class $C^1$ (edge by edge), we find that
\begin{multline*}
\sum_{e\succ\vv}\overline{u}_{n,e}(\vv)\frac{du_{n,e}}{dx_e}(\vv)=\\[.2cm]
=-\frac{\imath}{c_{n}}\sum_{e\succ\vv}\overline{u}_{n,e}(\vv)\left((mc^{2}_{n}+\omega_{n})v_{n,e}(\vv)_{\pm}+\left(\vert u_{n,e}(\vv)\vert^{2}+\vert v_{n,e}(\vv)\vert^{2}\right)^{\frac{p-2}{2}}v_{n,e}(\vv)_{\pm} \right)\\[.2cm]
=\underbrace{-\frac{\imath}{c_{n}}\sum_{e\succ\vv}\overline{u}_{n,e}(\vv)(mc^{2}_{n}+\omega_{n})v_{n,e}(\vv)_{\pm}}_{=:A}-\frac{\imath}{c_{n}}\sum_{e\succ\vv}\overline{u}_{n,e}(\vv)v_{n,e}(\vv)_{\pm}\left(\vert u_{n,e}(\vv)\vert^{2}+\vert v_{n,e}(\vv)\vert^{2}\right)^{\frac{p-2}{2}}
\end{multline*}
($v_{n,e}(\vv)_{\pm}$ meant as in Definition \ref{defi-dirac}). Moreover, as $u_{n}$ and $v_{n}$ satisfy the vertex conditions \eqref{eq-kirchtype1} and \eqref{eq-kirchtype2} (respectively), one has
\[
A=-\frac{\imath(mc^{2}_{n}+\omega_{n})}{c_{n}}\overline{u}_{n}(\vv)\sum_{e\succ\vv}v_{n,e}(\vv)_{\pm}=0,
\]
while, for any $\vv\in\cK$ and $e\succ\vv$, there results
\[
\begin{split}
\left\vert\overline{u}_{n,e}(\vv)v_{n,e}(\vv)_{\pm}\left(\vert u_{n,e}(\vv)\vert^{2}+\vert v_{n,e}(\vv)\vert^{2}\right)^{\frac{p-2}{2}}\right\vert&\leq \Vert\vert u_{n}\vert^{2}+\vert v_{n}\vert^{2}\Vert^{\frac{p-2}{2}}_{L^{\infty}(\cG)}\Vert u_{n} \Vert_{L^{\infty}(\cG)}\Vert v_{n} \Vert_{L^{\infty}(\cG)}\\[.2cm]
& \lesssim\Vert u_{n} \Vert_{H^{1}(\cG)}\Vert v_{n} \Vert_{H^{1}(\cG)}=o\left(\Vert u_{n}\Vert^{2}_{H^{1}(\cG)}\right)
\end{split}
\]
(where we used Lemma \ref{h1bound}, \eqref{vou} and Sobolev embeddings). As a consequence (since the number of the edges and the vertices is finite)
\[
\sum_{\vv\in\cK}\left(\sum_{e\succ\vv}\overline{u}_{n,e}(\vv)\frac{du_{n,e}}{dx_e}(\vv) \right)=o\left(\Vert u_{n}\Vert^{2}_{H^{1}(\cG)}\right),
\]
so that (recalling \eqref{eq-an})
\be\label{sobolevlowerbound}
\int_{\cG}\vert u'_{n}\vert^{2}\dx+a_n\int_{\cG}\vert u_{n}\vert^{2}\dx\gtrsim\Vert u_{n}\Vert^{2}_{H^{1}(\cG)}.
\ee

Let us focus on the r.h.s. of \eqref{PSremainder}. After multiplication times $\overline{u}_n$ and integration over $\cG$ we have
\[
 -\frac{\imath}{c_{n}}\int_\cK\overline{u}_n\left[\left(\vert u_{n}\vert^{2}+\vert v_{n}\vert^{2}\right)^{\f{p-2}{2}}v_{n} \right]'\dx+b_n\int_{\cK}\left( \vert u_{n}\vert^{2}+\vert v_{n}\vert^{2}\right)^{\frac{p-2}{2}}\vert u_{n}\vert^{2}\dx.
\]
The latter term can be easily estimated using the H\"{o}lder inequality and \eqref{eq-bn}, \eqref{H1zero} and \eqref{vou}, i.e.
\begin{multline*}
b_n\int_{\cK}\left( \vert u_{n}\vert^{2}+\vert v_{n}\vert^{2}\right)^{\frac{p-2}{2}}\vert u_{n}\vert^{2}\dx\lesssim\left(\Vert u_{n}\Vert^{p-2}_{L^{\infty}(\cG)}+\Vert v_{n}\Vert^{p-2}_{L^{\infty}(\cG)} \right)\int_{\cG}\vert u_{n}\vert^{2}\dx\\[.2cm]
\lesssim \left(\Vert u_{n}\Vert^{p-2}_{H^{1}(\cG)}+\Vert v_{n}\Vert^{p-2}_{H^{1}(\cG)} \right)\Vert u_{n}\Vert^{2}_{H^{1}(\cG)}=o\left(\Vert u_{n}\Vert^{2}_{H^{1}} \right).
\end{multline*}
On the contrary, the former one requires some further efforts. Clearly,
\begin{align}
 \label{derivativeterm}
\frac{1}{c_{n}}\int_{\cK}\overline{u}_n\left[ \left(\vert u_{n}\vert^{2}+\vert v_{n}\vert^{2}\right)^{\frac{p-2}{2}}v_{n}\right]'\dx = & \, \underbrace{\frac{1}{c_{n}}\int_{\cK}v'_{n}\overline{u}_{n}\left(\vert u_{n}\vert^{2}+\vert v_{n}\vert^{2} \right)^{\frac{p-2}{2}}\dx}_{=:I_1}+\nonumber\\[.2cm]
& \, +\underbrace{\frac{1}{c_{n}}\int_{\cK}\overline{u}_{n}v_{n}\left(\vert u_{n}\vert^{2}+\vert v_{n}\vert^{2} \right)^{\frac{p-4}{2}}\left( \overline{u}_{n}u'_{n}+\overline{v}_{n}v'_{n}\right)\dx}_{I_2}.
\end{align}
Using \eqref{eq-comp1rew} and again Lemma \ref{h1bound}, we immediately find that
\[
\begin{split}
\left\vert I_1\right\vert\lesssim\frac{1}{c_{n}^2}\Vert u_{n}\Vert^{2}_{H^{1}(\cG)}=o\left(\Vert u_{n}\Vert^{2}_{H^{1}(\cG)}\right).
\end{split}
\]
It is, then, left to estimate $I_2$. We distinguish two cases.

\textbf{Estimate for $I_2$, case $p\in(2,4)$}: as $p-4<0$ there holds
\[
\left(\vert u_{n}\vert^{2}+\vert v_{n}\vert^{2} \right)^{\frac{p-4}{2}}\leq 2^{\frac{p-4}{2}}\left(\vert u_{n}\vert\vert v_{n}\vert \right)^{\frac{p-4}{2}}.
\]
As a consequence
\[
\vert I_2\vert\lesssim\f{1}{c_n}\int_{\cK}\vert u_{n}\vert^{\frac{p}{2}}\vert v_{n}\vert^{\frac{p-2}{2}}\vert u'_{n}\vert\dx +\f{1}{c_n}\int_{\cK}\vert u_{n}\vert^{\frac{p-2}{2}}\vert v_{n}\vert^{\frac{p}{2}}\vert v'_{n}\vert\dx=:I_{2,1}+I_{2,2}.
\]
Moreover,
\begin{multline*}
I_{2,1}\lesssim\f{1}{c_n}\Vert v_{n}\Vert^{\frac{p-2}{2}}_{L^{\infty}(\cG)}\int_{\cK}\vert u_{n}\vert^{\frac{p}{2}}\vert u'_{n}\vert\dx\\[.2cm]
\leq\f{1}{c_n}\Vert v_{n}\Vert^{\frac{p-2}{2}}_{L^{\infty}(\cG)}\Vert u_{n}\Vert^{\frac{p}{2}}_{L^{p}(\cG)}\Vert u'_{n}\Vert_{L^{2}(\cG)}\lesssim\f{1}{c_n}\Vert v_{n}\Vert^{\frac{p-2}{2}}_{L^{\infty}(\cG)}\Vert u_{n}\Vert^{\frac{p}{2}+1}_{H^{1}(\cG)},
\end{multline*}
whereas
\[
I_{2,2}\lesssim \f{1}{c_n}\Vert u_{n}\Vert^{\frac{p-2}{2}}_{L^{\infty}}\Vert v_{n}\Vert^{\frac{p}{2}+1}_{H^{1}(\cG)}\lesssim\f{1}{c_n}\Vert u_{n}\Vert^{\frac{p-2}{2}}_{L^{\infty}}\Vert u_{n}\Vert^{\frac{p}{2}+1}_{H^{1}(\cG)},
\]
so that (since $p>2$)
\[
\vert I_2\vert=o\left(\Vert u_{n}\Vert_{H^{1}(\cG)}^2 \right).
\]

\textbf{Estimate for $I_2$, case $p\in[4,6)$}: as $p-4\geq0$, there holds
\[
\left\Vert (\vert u_{n}\vert^{2}+\vert v_{n}\vert^{2})^{\frac{p-4}{2}}\right\Vert_{L^{\infty}(\cG)}\leq C.
\]
and then arguing as before one can easily find (as well) that
\[
\left\vert I_2\right\vert = o\left(\Vert u_{n}\Vert^{2}_{H^{1}(\cG)} \right).
\]

Summing up, we proved that for all $p\in(2,6)$,there results
\[
|I_1|+|I_2|=o\left(\Vert u_{n}\Vert^{2}_{H^{1}(\cG)}\right)
\]
and hence, combining with \eqref{PSremainder}, \eqref{sobolevlowerbound} and \eqref{derivativeterm}, we obtain that
\[
\Vert u_{n}\Vert^{2}_{H^{1}(\cG)}=o\left(\Vert u_{n}\Vert^{2}_{H^{1}(\cG)} \right),
\]
which is the contradiction that concludes the proof.
 \end{proof}

 We now prove item (iii).
 \begin{lemma}
  \label{lem-PSNLS}
  The sequence $\left( u_{n}\right)$ is a Palais-Smale sequence for $J$.
 \end{lemma}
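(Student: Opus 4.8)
The plan is to invoke Lemma~\ref{lem-equiv}. Since $\psi_n\in\dom(\cD)$, its first component $u_n=\psi_n^1$ belongs to $H^1(\cG)$ and, by \eqref{eq-kirchtype1}, is continuous at every vertex, so $u_n\in H$; moreover $(u_n)$ is bounded in $H$ by Lemma~\ref{h1bound}. Hence it suffices to show
\[
 \sup_{\|\phi\|_H\leq 1}\langle A_n(u_n)|\phi\rangle\longrightarrow 0\qquad\text{as }n\to\infty,
\]
because then, by Lemma~\ref{lem-equiv}, $(u_n)$ is a Palais--Smale sequence for $J$.

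First I would test against an arbitrary $\phi\in H$ the equation satisfied by $u_n$. Recall the identity \eqref{PSremainder}, obtained in the proof of Lemma~\ref{lem-nonzero} (its derivation rests only on \eqref{eqcomponenti1}--\eqref{eqcomponenti2} and \eqref{frequency}, not on the contradiction hypothesis of that lemma), which in divergence form reads
\[
 -\Big[u_n'-\frac{\imath\chi_{_\cK}}{c_n}(|u_n|^2+|v_n|^2)^{\frac{p-2}{2}}v_n\Big]' = -a_nu_n+\chi_{_\cK}\,b_n\,(|u_n|^2+|v_n|^2)^{\frac{p-2}{2}}u_n,
\]
the left-hand side lying in $L^2(\cG)$ and being of class $C^1$ edge by edge, so that multiplication by $\overline{\phi}$ and integration by parts edge by edge are rigorous (exactly as in the proof of Lemma~\ref{lem-nonzero}). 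The boundary contributions collect at the vertices as $\sum_{\vv}\overline{\phi}(\vv)\sum_{e\succ\vv}g_{n,e}(\vv)_{\pm}$, where $g_n$ denotes the bracketed vector field. Using \eqref{eqcomponenti2} one checks that the nonlinear part of $g_{n,e}(\vv)_{\pm}$ cancels, leaving $g_{n,e}(\vv)_{\pm}=\imath c_n^{-1}(mc_n^2+\omega_n)\,v_{n,e}(\vv)_{\pm}$; since $\phi$ is continuous at the vertices and $v_n=\psi_n^2$ satisfies the balance condition \eqref{eq-kirchtype2} there, each vertex term vanishes (note that, $\cG$ being connected with nonempty compact core, every vertex is an endpoint of a bounded edge, hence lies in $\cK$). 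This yields
\[
 \langle A_n(u_n)|\phi\rangle = \frac{\imath}{c_n}\int_\cK(|u_n|^2+|v_n|^2)^{\frac{p-2}{2}}v_n\,\overline{\phi}'\dx + b_n\int_\cK\Big[(|u_n|^2+|v_n|^2)^{\frac{p-2}{2}}-|u_n|^{p-2}\Big]u_n\,\overline{\phi}\dx.
\]

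It then remains to estimate the two terms on the right uniformly for $\|\phi\|_H\leq 1$. For the first, H\"older's inequality bounds it by $c_n^{-1}\,\big\||u_n|^2+|v_n|^2\big\|_{L^\infty(\cK)}^{\frac{p-2}{2}}\,\|v_n\|_{L^2(\cG)}\,\|\phi'\|_{L^2(\cG)}$, which tends to zero (indeed it is $\cO(c_n^{-1})$) by Lemma~\ref{h1bound}, the Sobolev embedding $H^1(\cG)\hookrightarrow L^\infty(\cG)$, and Lemma~\ref{lem-vnzero}. For the second, I would use the elementary inequality for $t\mapsto t^{\frac{p-2}{2}}$: subadditivity when $p\in(2,4]$, giving $\big|(|u_n|^2+|v_n|^2)^{\frac{p-2}{2}}-|u_n|^{p-2}\big|\leq|v_n|^{p-2}$, and the mean value inequality when $p\in[4,6)$, giving the bound $C(|u_n|^2+|v_n|^2)^{\frac{p-4}{2}}|v_n|^2$ --- precisely the dichotomy already employed for $I_2$ in the proof of Lemma~\ref{lem-nonzero}. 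In either case, after H\"older's inequality, the term is controlled by a constant (uniform in $n$ and $\phi$, using that $b_n\to 2m$, that $\|u_n\|_{H^1(\cG)}$ is bounded, and that $\|\phi\|_{L^2(\cG)}\leq 1$) times a positive power of $\|v_n\|_{L^\infty(\cG)}$, hence is $o(1)$ by Lemmas~\ref{h1bound} and \ref{lem-vnzero}. Combining the two estimates gives the claimed uniform convergence.

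I expect the only genuinely delicate point to be the vanishing of the vertex terms: it is exactly there that the Kirchhoff-type conditions of the Dirac problem --- continuity of the first component \eqref{eq-kirchtype1} and the ``balance'' of the second component \eqref{eq-kirchtype2} --- together with the relation \eqref{eqcomponenti2} linking $u_n'$ and $v_n$ at the vertices, reproduce the Kirchhoff conditions built into $H$ and $J$. Everything else is routine, the required inequalities being uniform-in-$\phi$ analogues of bounds already proved in Lemmas~\ref{lem-vnzero} and \ref{lem-nonzero}.
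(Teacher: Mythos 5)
Your proposal is correct and follows essentially the same route as the paper: reduction via Lemma \ref{lem-equiv}, testing the identity \eqref{PSremainder} against $\phi\in H$ with $\|\phi\|_H\leq1$, handling the vertex contributions through \eqref{eq-kirchtype1}--\eqref{eq-kirchtype2} and \eqref{eqcomponenti2}, and the same $p\in(2,4]$ versus $p\in[4,6)$ dichotomy for the nonlinear remainder. The only (minor) difference is that by keeping the equation in divergence form you make the vertex terms cancel exactly, whereas the paper integrates by parts twice and estimates two separate sets of boundary terms as $o(1)$ in \eqref{PS1} and \eqref{PS4}; both are valid.
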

 
 \begin{proof}
By Lemma \ref{lem-equiv} it is sufficient to prove \eqref{eq-psaux}. Take, then, $\varphi\in H$ with $\Vert\varphi\Vert_{H^{1}(\cG)}\leq1$. Multiplying \eqref{PSremainder} by $\varphi$ and integrating over $\cG$ (which is rigorous as we showed in the proof of Lemma \ref{lem-nonzero}) one gets
\begin{multline}\label{testPS}
-\int_{\cG}\overline{\varphi}u''_{n}\dx+a_n\int_{\cG}\overline{\varphi}u_{n}\dx=\\[.2cm]
=-\frac{i}{c_{n}}\int_{\cK}\overline{\varphi}\left[(\vert u_{n}\vert^{2}+\vert v_{n}\vert^{2})^{\frac{p-2}{2}}v_{n} \right]'\dx+b_n\int_{\cK}(\vert u_{n}\vert^{2}+\vert v_{n}\vert^{2})^{\frac{p-2}{2}}u_{n}\overline{\varphi}\dx.
\end{multline}
Arguing as in the proof of Lemma \ref{lem-nonzero} and using Lemma \ref{lem-vnzero}, one can check that
\be\label{PS1}
-\int_{\cG}\overline{\varphi}u''_{n}\dx=\int_{\cG}\overline{\varphi}'_{n}u'_{n}\dx+\sum_{\vv\in\cK}\left(\sum_{e\succ\vv}\overline{\varphi}(\vv)\frac{d}{dx}u_{n,e}(\vv) \right)=\int_{\cG}\overline{\varphi}'_{n}u'_{n}\dx+o(1)
\ee
(where throughout we mean that $o(1)$ is independent of $\phi$). Now, the first integral at the r.h.s. of \eqref{testPS} reads
\be\label{PS2}
\begin{split}
\int_{\cK}\overline{\varphi}\left[(\vert u_{n}\vert^{2}+\vert v_{n}\vert^{2})^{\frac{p-2}{2}}v_{n}\right]'\dx&=-\int_{\cK}\overline{\varphi}'\left[(\vert u_{n}\vert^{2}+\vert v_{n}\vert^{2})^{\frac{p-2}{2}}v_{n} \right]\dx\\[.2cm]
&+\sum_{\vv\in\cK}\left(\sum_{e\succ\vv}\overline{\varphi}(\vv)(\vert u_{n,e}(\vv)\vert^{2}+\vert v_{n,e}(\vv)\vert^{2})^{\frac{p-2}{2}}v_{n,e}(\vv)_{\pm} \right),
\end{split}
\ee
where the former term is estimated by
\be
\label{PS3}
\int_{\cK}(\vert u_{n}\vert^{2}+\vert v_{n}\vert^{2})^{\frac{p-2}{2}}\vert v_{n}\vert\vert\overline{\varphi}'\vert\dx\lesssim\int_{\cK}\vert v_{n}\vert\vert\varphi'\vert\dx\lesssim\Vert v_{n}\Vert_{L^{2}(\cK)}\Vert\varphi'\Vert_{L^{2}(\cG)}=o(1),
\ee
whereas the latter is estimated by
\be
\label{PS4}
\sum_{\vv\in\cK}\left(\sum_{e\succ\vv}|\overline{\varphi}(\vv)|(\vert u_{n,e}(\vv)\vert^{2}+\vert v_{n,e}(\vv)\vert^{2})^{\frac{p-2}{2}}|v_{n,e}(\vv)| \right)\lesssim\Vert\varphi\Vert_{L^{\infty}(\cG)}\Vert v_{n}\Vert_{L^{\infty}(\cG)}=o(1),
\ee
(exploiting Lemmas \ref{h1bound} and \ref{lem-vnzero}). It is, then, left to discuss the last term at the r.h.s. of \eqref{testPS}. First note that
\[
 \int_\cK\left(|u_n|^2+|v_n|^2\right)^{\f{p-2}{2}}u_n\overline{\phi}\dx=\int_\cK|u_n|^{p-2}u_n\overline{\phi}\dx+\underbrace{\int_\cK\big[\left(|u_n|^2+|v_n|^2\right)^{\f{p-2}{2}}-|u_n|^{p-2}\big]u_n\overline{\phi}\dx}_{=:R}
\]
and that $\left(|u_n|^2+|v_n|^2\right)^{\f{p-2}{2}}-|u_n|^{p-2}\geq0$.

Let us distinguish two cases (as in the proof of Lemma \ref{lem-nonzero}). Assume first that $p\in(2,4)$. Therefore $0<\frac{p-2}{2}<1$, and this implies that
\begin{multline*}
|R|\leq\int_{\cK}\big[(\vert u_{n}\vert^{2}+\vert v_{n}\vert^{2})^{\frac{p-2}{2}} -\vert u_{n}\vert^{p-2}\big]|u_{n}||\overline{\varphi}|\dx\leq\int_{\cK}\vert v_{n}\vert^{p-2}\vert u_{n}\vert\vert\overline{\varphi}\vert\dx\\[.2cm]
\leq \Vert u_{n}\Vert_{L^{\infty}(\cG)}\Vert v_{n}\Vert_{L^{\infty}(\cG)}^{p-2}\Vert \overline{\varphi}\Vert_{L^{2}(\cG)}=o(1).
\end{multline*}
(where we used again Lemmas \ref{h1bound} and \ref{lem-vnzero}).

On the other hand, assume that $p\in(4,6)$ (the case $p=4$ is analogous). In this case we exploit the elementary inequality
\[
(a+b)^t-a^t\leq c_t\, b\,(a^{t-1}+b^{t-1}),\qquad\forall a,b>0,
\]
with $t>1$ and $c_t>0$. Then, setting $t=\frac{p-2}{2}>1$, $a=\vert u_{n}\vert^{2}$ and $b=\vert v_{n}\vert^{2}$, we have that
\[
\int_{\cK}\big[\left( \vert u_{n}\vert^{2}+\vert v_{n}\vert^{2}\right)^{\frac{p-2}{2}}-\vert u_{n}\vert^{p-2}\big]|u_{n}||\overline{\varphi}|\dx\lesssim\int_{\cK}\left(\vert u_{n}\vert^{\frac{p-4}{2}}+\vert v_{n}\vert^{\frac{p-4}{2}}\right)|v_n|^2|u_n||\phi|\dx=o(1).
\]
Summing up, we proved that for all $p\in(2,6)$ there holds
\[
\int_{\cK}\left( \vert u_{n}\vert^{2}+\vert v_{n}\vert^{2}\right)^{\frac{p-2}{2}} u_{n}\overline{\varphi}\dx=\int_{\cK}\vert u_{n}\vert^{p-2} u_{n}\overline{\varphi}\dx+o(1)
\]
and, combining with \eqref{testPS}, \eqref{PS1}, \eqref{PS2}, \eqref{PS3} and \eqref{PS4}, one gets \eqref{eq-psaux}, which concludes the proof.
\end{proof}

Finally, we have all the ingredients to prove point (iv) and thus Theorem \ref{thm-limit}.

\begin{proof}[Proof of Theorem \ref{thm-limit}]
 From Lemma \ref{lem-PSNLS} the sequence $(u_n)$ is a Palais-Smale sequence for $J$. In addition, from Lemma \ref{h1bound} it is bounded in $H$ so that (up to subsequences)
 \begin{equation}
  \label{eq-conv}
  \left\{\begin{array}{ll}
   \displaystyle u_n\rightharpoondown u & \text{in}\quad H^{1}(\cG),\\[.2cm] 
   \displaystyle u_n\to u               & \text{in}\quad L^p(\cK).
  \end{array}
  \right.
 \end{equation}
 Now, following \cite{ST-JDE}, define the linear functional $B(u):H^{1}(\cG)\to\R$
 \[
  B(u)v:=\int_\cG u'\overline{v}'\dx-\lambda\int_\cG u\overline{v}\dx.
 \]
 From Lemma \ref{lem-PSNLS}, \eqref{eq-bn}, \eqref{eq-an} and \eqref{eq-conv},
 \begin{align*}
  o(1)= & \, \langle A_n(u_n)-B(u)|u_n-u\rangle\\[.2cm]
      = & \, \int_\cG|u_n'-u'|^2\dx-b_n\int_\cK|u_n|^{p-2}u_n(\overline{u}_n-\overline{u})\dx+a_n\int_\cG u_n(\overline{u}_n-\overline{u})\dx+\lambda\int_\cG u(\overline{u}_n-\overline{u})\dx=\\[.2cm]
      = & \, \int_\cG|u_n'-u'|^2\dx-\lambda\int_\cG|u_n-u|^2\dx+o(1),
 \end{align*}
 and, since $\lambda<0$, this entails that $u_n\to u$ in $H^{1}(\cG)$. Since by Lemma \ref{lem-nonzero} $u\neq0$ (recalling also Lemma \ref{lem-vnzero}), the claim of the theorem is proved.
\end{proof}


\appendix
\section{The Dirac operator on metric graphs}
\label{sec-linear}
In this section, we present a sketch of the proof of Proposition \ref{spectrumkirchoff}, that is, self-adjointness and spectrum of the operator $\cD$ introduced by Definition \ref{defi-dirac}.

\medskip
Preliminarily, we observe that, using well-known results from the literature about self-adjoint extensions, one could check that the operator $\cD$ is in fact self-adjoint (see, e.g. \cite{AP-JPA,BT-JMP,P-OM}). In particular, the main result of \cite{BT-JMP} proves self-adjointness for Dirac operators on metric graphs with a wide family of linear vertex conditions (including the Kirchoff-type ones \eqref{eq-kirchtype1}-\eqref{eq-kirchtype2}).

On the other hand, the study the spectral properties of $\cD$ requires some further efforts. First, one has to study the operator on the single components of the graph (segments and halflines) imposing suitable boundary conditions. Then, one describes the effect of connecting these one-dimensional components according to the topology of the graph, through the vertex condtions \eqref{eq-kirchtype1}-\eqref{eq-kirchtype2}.

This can be done, for instance, using the Theory of \emph{Boundary Triplets}. In this section we limit ourselves to a brief presentation of the main ideas and techniques. We refer the reader to \cite{CMP-JDE,GT-p} and references therein for more details.

\medskip
Let us start by recalling some basic notions. Let $A$ be a densely defined closed symmetric operator in a separable Hilbert space $\mathcal{H}$ with equal deficiency
indices $\mathrm{n}_\pm(A):=\dim \cN_{\pm \I} \leq \infty,$ where
$\cN_z:=\ker(A^*-z)$ is the defect subspace.

\begin{defi}
A triplet $\Pi=\{\gH,\gG_0,\gG_1\}$ is called a \emph{Boundary Triplet} for the adjoint operator $A^*$ if and only if $\gH$ is an auxiliary
Hilbert space and $\Gamma_0,\Gamma_1:\  \dom(A^*)\rightarrow \gH$
are linear mappings such that the second abstract Green identity
\[
\langle A^*f|g\rangle - \langle f|A^*g\rangle = \langle\gG_1f,\gG_0g\rangle_\gH-
\langle\gG_0f,\gG_1g\rangle_\gH, \qquad f,g\in\dom(A^*),
\]
holds ($\langle\cdot,\cdot\rangle_\gH$ denoting the scalar product in $\gH$) and the mapping
\[
 \gG:=\begin{pmatrix}\Gamma_0\\[.2cm]\Gamma_1\end{pmatrix}:  \dom(A^*)
\rightarrow \gH \oplus \gH
\]
is surjective.
\end{defi}

\begin{defi}
 Let $\Pi=\{\gH,\gG_0,\gG_1\}$ be a boundary triplet for the adjoint operator $A^*$. Consider, in addition, the operator
 \[
  A_{0}:=A^{*}|_{\operatorname{ker}\Gamma_{0}}
 \]
and denote by $\rho(A_{0})$ its resolvent set. Then, the operator valued functions $\gamma(\cdot) :\rho(A_0)\rightarrow  \mathcal{L}(\gH,\cH)$ and
$M(\cdot):\rho(A_0)\rightarrow  \mathcal{L}(\gH)$ defined by
  \begin{equation}\label{gammafield}
\gamma(z):=\left(\Gamma_0|_{\cN_z}\right)^{-1}
\qquad\text{and}\qquad M(z):=\Gamma_1\circ\gamma(z), \qquad
z\in\rho(A_0),
  \end{equation}
are called the {\em $\gamma$-field} and {\em Weyl function},
respectively, associated with $\Pi.$
\end{defi}

\medskip
Then, we can sketch how to apply the Theory of Boundary Triplets to metric graphs. First, observe that the set $\mE$ of the edges of a metric graph $\cG$ can be decomposed in two subsets: $\mathrm{E}_{s}$ of the bounded edges and $\mathrm{E}_{h}$ of the half-lines. 

Fix, then, $e\in \mathrm{E}_{s}$ and consider the corresponding minimal operator  $\widetilde{D_{e}}$ on $\mathcal{H}_{e}=L^{2}(I_{e})\otimes\C^{2}$, with the same action of \eqref{eq-actionD} and domain $H^{1}_{0}(I_{e})\otimes\C^{2}$. The domain of the adjoint operator, which acts as $\widetilde{D_{e}}$, is 
\[
 \dom(\widetilde{D^{*}_{e}})=H^{1}(I_{e})\otimes\C^{2}
\]
and a suitable choice of trace operators (introduced in \cite{GT-p}) is given by $\Gamma^{e}_{0,1}:H^{1}(I_{e})\otimes\C^{2}\rightarrow\C^{2}$, with
\[
\Gamma^{e}_{0}\begin{pmatrix}\psi^{1}_e \\[.2cm] \psi^{2}_e  \end{pmatrix}=\begin{pmatrix} \psi^{1}_e(0) \\[.2cm] ic\psi^{1}_e(\ell_{e})\end{pmatrix},\qquad \Gamma^{e}_{1}\begin{pmatrix}\psi^{1}_e \\[.2cm] \psi^{2}_e  \end{pmatrix}=\begin{pmatrix} ic\psi^{2}_e(0) \\[.2cm] \psi^{2}_e(\ell_{e})\end{pmatrix}.
\]
In addition, given the boundary triplet $\left\{\gH_{e},\Gamma^{e}_{0},\Gamma^{e}_{1} \right\}$, with $\gH_{e}=\C^{2}$, one can compute the gamma field and the Weyl function using \eqref{gammafield}, and prove that $\widetilde{D^{*}_{e}}$ has defect indices $n_{\pm}(\widetilde{\cD_{e}})=2$. Note, also, that the operator $\cD_e$, with the same action of $\widetilde{D^{*}_{e}}$ and domain
\[
\dom(\cD_{e})=\operatorname{ker}\Gamma^{e}_{0},
\]
is self-adjoint by construction.

Analogously, fix $e'\in \mathrm{E}_{h}$ and consider the minimal operator $\widetilde{\cD_{e'}}$ on $\cH_{e'}=L^{2}(\R_{+})\otimes\C^{2}$, with the same action as before and domain $H^{1}_{0}(\R_{+})\otimes\C^{2}$. The adjoint operator has domain
\[
 \dom(\widetilde{\cD^{*}_{e'}})=H^{1}(\R_{+})\otimes\C^{2}
\]
and the trace operators $\Gamma^{e'}_{0,1}:H^{1}(\R_{*})\otimes\C^{2}\rightarrow\C$ can be defined as
\[
\Gamma^{e'}_{0}\begin{pmatrix}\psi^{1}_{e'} \\[.2cm] \psi^{2}_{e'}  \end{pmatrix}= \psi^{1}_{e'}(0),\qquad \Gamma^{e'}_{1}\begin{pmatrix}\psi^{1}_{e'} \\[.2cm] \psi^{2}_{e'}  \end{pmatrix}= ic\psi^{2}_{e'}(0).
\]
Again, the gamma field and the Weyl function are provided by \eqref{gammafield} (with respect to the boundary triplet $\{ \gH_{e'},\Gamma^{e'}_{0},\Gamma^{e'}_{1}\}$, with $\gH_{e'}=\C$), while the defect indices are $n_{\pm}(\widetilde{\cD_{e'}})=1$. As before, the operator
\[
\cD_{e'}:=\widetilde{\cD^{*}_{e'}},\qquad\dom(\cD_{e'}):=\operatorname{ker}\Gamma^{e}_{0}
\]
is self-adjoint by construction.

As a further step, consider the operator on $\cH=\bigoplus_{e\in\mathrm{E}_{s}}\cH_{e}\oplus\bigoplus_{e'\in\mathrm{E}_{h}}\cH_{e'}$ defined as the direct sum
\[
\cD_{0}:=\bigoplus_{e\in \mathrm{E}_{s}}\cD_{e}\oplus\bigoplus_{e'\in \mathrm{E}_{h}}\cD_{e'},
\]
whose domain is given by the direct sum of the domains of the addends. The spectrum of the operator $\cD_{0}$, it is given by the superposition of the spectra of each addend, that is
\[
\sigma(\cD_{0})=\bigcup_{e\in \mathrm{E}_{s}}\sigma(\cD_{e})\cup\bigcup_{e'\in \mathrm{E}_{h}}\sigma(\cD_{e'}).
\]
Precisely, in \cite{CMP-JDE} it is proved that each segment $I_{e}$, $e\in \mathrm{E}_{s}$ contributes to the point spectrum of $\cD_{0}$ with eigenvalues given by
\begin{equation}\label{pointspectrum}
\sigma(\cD_{e})=\sigma_{p}(\cD_{e})=\left\{\pm\sqrt{\frac{
2mc^2\pi^2}{ \ell_{e}^2}\,\left(j+\frac12\right)^{2}+m^{2}{c^{4}}} \ , \ j\in\mathbb{N}\right\}, \qquad\forall e\in \mathrm{E}_{s},
\end{equation}
while the spectrum on half-lines, on the contrary, is purely absolutely continuous and is given by
\begin{equation}\label{absolutelycontinuous}
\sigma(\cD_{e})=\sigma_{ac}(\cD_{e})=(-\infty,-mc^{2}]\cup[mc^{2},+\infty),\qquad \forall e\in \mathrm{E}_{h}.
\end{equation}

\medskip
Let us describe, now, the Dirac operator introduced in Definition \ref{defi-dirac} using Boundary Triplets. Consider the operator
\[
\widetilde{\cD}:=\bigoplus_{e\in \mathrm{E}_{s}}\widetilde{\cD_{e}}\oplus\bigoplus_{e'\in \mathrm{E}_{h}}\widetilde{\cD_{e'}},
\]
and its adjoint
\[
\widetilde{\cD^{*}}:=\bigoplus_{e\in \mathrm{E}_{s}}\widetilde{\cD^{*}_{e}}\oplus\bigoplus_{e'\in \mathrm{E}_{h}}\widetilde{\cD^{*}_{e'}}
\]
(with obvious definition of the domains). Define, also, the trace operators
\[
 \Gamma_{0,1}=\bigoplus_{e\in \mathrm{E}_{s}}\Gamma^{e}_{0,1}\oplus\bigoplus_{e'\in \mathrm{E}_{h}}\Gamma^{e'}_{0,1}.
\] 
One can prove that $\left\{\gH,\Gamma_{0},\Gamma_{1} \right\}$, with $\gH=\C^{M}$ and $M=2\vert\mathrm{E}_{s}\vert+\vert\mathrm{E}_{h}\vert$,  is a boundary triplet for the operator $\widetilde{\cD^{*}}$, and it is possible to find the corresponding gamma-field and Weyl function arguing as before.

On the other hand, note that boundary conditions \eqref{eq-kirchtype1}-\eqref{eq-kirchtype2} are ``local'', in the sense that at each vertex they are expressed independently of the conditions on other vertices. As a consequence, they can be expressed by means of suitable block diagonal matrices $A,B\in \C^{M\times M}$, with $AB^{*}=BA^{*}$, as
\[
A\Gamma_{0}\psi=B\Gamma_{1}\psi
\]
(the model case at the end of the section clarifies the above notation). Observe also that the sign convention of \eqref{eq-kirchtype2} can be incorporated in the definition of the matrix $B$.

Summing up, the Dirac operator with Kirchoff-type conditions can be defined as 
\[
\cD:=\widetilde{\cD^{*}},\qquad \dom(\cD):=\operatorname{ker}(A\Gamma_{0}-B\Gamma_{1}),
\]
and thus the operator is self-adjoint (again) by construction.

\begin{remark}
The boundary triplets method provides an alternative way to prove the self-adjointness of the Dirac operator with conditions \eqref{eq-kirchtype1}-\eqref{eq-kirchtype2}, different from the classical approach \emph{\`{a} la} Von Neumann adopted in \cite{BT-JMP}.
\end{remark}

It is then left to prove \eqref{eq-sp_D}. As for the Schr\"{o}dinger case \cite{KS-JPA}, the following Krein-type formula for resolvent operators can be proved
\begin{equation}\label{krein}
(\cD-z)^{-1}-(\cD_{0}-z)^{-1}=\gamma(z)\left(B\,M(z)-A\right)^{-1}B\gamma^{*}(\overline{z}), \qquad \forall z\in\rho(\cD)\cap\rho(\cD_{0}),
\end{equation}
and thus the resolvent of the operator $\cD$ can be regarded as a perturbation of the resolvent of the operator $\cD_{0}$. In the above formula $\gamma(\cdot)$ and $M(\cdot)$ are the gamma-field and the Weyl function, respectively, associated with $\cD$ (see \cite{CMP-JDE}). It turns out that the operator appearing in the right-hand side of \eqref{krein} is of finite rank. 
Therefore using Weyl's Theorem \cite[Thm XIII.14]{RS-IV} one can conclude from \eqref{krein} that 
\[
\sigma_{ess}(\cD)=\sigma_{ess}(\cD_{0})=(-\infty,-mc^{2}]\cup[mc^{2},+\infty).
\]

Finally, recall that the point eigenvalues \eqref{pointspectrum} for $\cD_{0}$ are embedded in the continuous spectrum \eqref{absolutelycontinuous}. Hence, in order to conclude the proof of Proposition \ref{spectrumkirchoff} we have to show that they cannot enter the gap $(-mc^2,mc^2)$ as vertex conditions \eqref{eq-kirchtype1}-\eqref{eq-kirchtype2} are imposed.

Let $\lambda\in\sigma(\cD)$ be an eigenvalue. As a consequence, for some $\psi\in \dom(\cD)$, there holds
\[
\cD\psi=\lambda\psi,
\]
that is
\begin{gather}
\label{eigenequation1}-ic\frac{d\psi^{2}}{dx}=(\lambda-mc^{2})\psi^{1}, \\[.2cm]
\label{eigenequation2}-ic\frac{d\psi^{1}}{dx}=(\lambda+mc^{2})\psi^{2}.
\end{gather}
Assuming $\vert\lambda\vert\neq m$, we can divide both sides of \eqref{eigenequation2} by $(\lambda+mc^{2})$ and plug the value of $\psi^{2}$ into \eqref{eigenequation1}, so that
\be\label{eigenlaplace}
-c^{2}\frac{d^{2}\psi^{1}}{dx^{2}}=(\lambda^{2}-m^{2}c^{4})\psi^{1}.
\ee
In addition, combining conditions \eqref{eq-kirchtype1}-\eqref{eq-kirchtype2} yields
\[
\begin{split}
&\sum_{e\succ v}\frac{d\psi^{1}_{e}}{dx}(\vv)=0, \\[.2cm]
&\psi^{1}_{e_{i}}(\vv)=\psi^{1}_{e_{j}}(\vv),\quad\forall e_{i},e_{j}\succ \vv .
\end{split}
\]
Then, $\psi^{1}$ turns out to be an eigenfunction of the laplacian with Kirchhoff vertex conditions on $\cG$. Hence, multiplying \eqref{eigenlaplace} by $\psi^{1}$ and integrating, one can see that 
\[
\vert\lambda\vert > mc^{2},
\]
thus proving that there cannot be any eigenvalue of $\cD$ in $(-mc^{2},mc^{2})$. In other words, imposing Kirchoff-type vertex conditions, the eigenvalues \eqref{pointspectrum} can ``move'' to the thresholds $\pm mc^{2}$, but cannot ``enter the gap''.


\subsection{A model case: the triple junction}

Let us consider an example in order to clarify the main ideas explained before. Consider a 3-star graph with one bounded edge and two half-lines, as depicted in Figure \ref{fig-grafoesempio}.

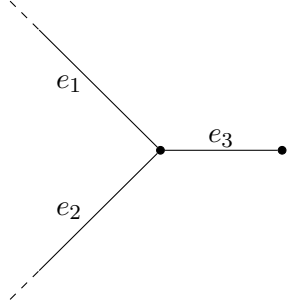
\begin{figure}[h]
 \centering
 \begin{tikzpicture}[xscale= 0.4,yscale=0.4]
 \node at (0,0) [nodo] (00) {};
 \node at (4,0) [nodo] (4) {};
 \node at (-4,4) [infinito] (-44) {};
 \node at (-5,5) [infinito] (-4545) {};
 \node at (-4,-4) [infinito] (-4-4) {};
 \node at (-5,-5) [infinito] (-45-45) {};
 \node at (-3,2.2) [infinito] () {$e_1$};
 \node at (-3,-2) [infinito] () {$e_2$};
 \node at (2,.4) [infinito] () {$e_3$};

 \draw [-] (00) -- (4);
 \draw [-] (00) -- (-44);
 \draw [dashed] (-44) -- (-4545);
 \draw [-] (00) -- (-4-4);
 \draw [dashed] (-4-4) -- (-45-45);
 \end{tikzpicture}
 \caption{A 3-star graph with a finite edge.}
 \label{fig-grafoesempio}
\end{figure}

In this case the finite edge is identified with the interval $I=[0,L]$ and $0$ corresponds to the common vertex of the segment and the half-lines. A suitable choice for the trace operators is
\[
\Gamma_{0}\psi=\left(\begin{array}{c}\psi_{e_1}^{1}(0) \\[.2cm]\psi_{e_2}^{1}(0) \\[.2cm] \psi_{e_3}^{1}(0) \\[.2cm] ic\psi_{e_3}^{1}(L)\end{array}\right),\qquad \Gamma_{1}\psi=\left(\begin{array}{c}ic\psi_{e_1}^{2}(0) \\[.2cm]ic \psi_{e_2}^{2}(0) \\[.2cm]ic \psi_{e_3}^{2}(0) \\[.2cm] \psi_{e_3}^{2}(L)\end{array}\right),
\] 
and the Kirchoff-type conditions \eqref{eq-kirchtype1}-\eqref{eq-kirchtype2} can be rewritten as $A\Gamma_{0}\psi=B\Gamma_{1}\psi$, $AB^{*}=BA^{*}$, where

\[
A=\frac{2}{3}\left(\begin{array}{cccc}-2 & 1 & 1 & 0 \\[.2cm] 1 & -2 & 1 & 0 \\[.2cm] 1 & 1 & -2 & 0 \\[.2cm]0 & 0 & 0 & a\end{array}\right),\qquad B=-\imath\frac{2}{3}\left(\begin{array}{cccc}1 & 1 & 1 & 0 \\[.2cm] 1 & 1 & 1 & 0 \\[.2cm] 1 & 1 & 1 & 0 \\[.2cm]0 & 0 & 0 & b\end{array}\right)
\]
(where, choosing the parameters $a,b\in\mathbb{C}$, we can fix the value of the spinor on the non-connected vertex). Since, as already remarked, conditions \eqref{eq-kirchtype1}-\eqref{eq-kirchtype2} are defined independently on each vertex, one can iterate the above construction for a more general graph structure, thus obtaining matrices $A,B$ with a block structure, each block corresponding to a vertex (for the sake of brevity we omit the details). 


\section{Definition of the form domain}
\label{sec-formdomain}

In Section \ref{sec-quadraticform} we claimed that the form domain of the Dirac operator $\cD$ can be defined interpolating between $L^{2}(\cG,\C^{2})$ and the operator domain \eqref{eq-dirac_domain}. The aim of this section is to provide a more detailed justification of this statement, combining Spectral Theorem and Real Interpolation Theory.

One of the most commonly used forms of the Spectral Theorem states, roughly speaking, that every self-adjoint operator on a Hilbert space is isometric to a multiplication operator on a suitable $L^{2}$-space. In this sense the operator can be "diagonalized" in an abstract way.

\begin{thm}{(\cite[thm. VIII.4]{RS-I})}
Let $H$ be a self-adjoint operator on a separable Hilbert space $\cH$ with domain $\dom(H)$. There exists a measure space $(M,\mu)$, with $\mu$ a finite measure, a unitary operator
\[
U:\cH\longrightarrow L^{2}\left(M,d\mu\right),
\]
and a real valued function $f$ on $M$, a.e. finite, such that
\begin{enumerate}
\item $\psi\in\dom(H)$ if and only if $f(\cdot)(U\psi)(\cdot)\in L^{2}(M,d\mu),$
\item if $\varphi\in U\left(\dom(H) \right)$, then $\left(UHU^{-1}\varphi \right)(m)=f(m)\varphi(m),\quad \forall m\in M$.
\end{enumerate}
\end{thm}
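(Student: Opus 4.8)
As this is the classical spectral theorem, the plan is the standard two-stage argument: first produce the multiplication model for bounded self-adjoint operators, and then remove the boundedness hypothesis via a Cayley-type reduction.

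First I would reduce to the bounded case. The resolvent $R:=(H-\I)^{-1}$ is everywhere defined, bounded and injective, and its real and imaginary parts form a commuting pair of bounded self-adjoint operators; applying the bounded case to this pair produces a unitary $U:\cH\to L^{2}(M,\mu)$, with $\mu$ finite, such that $URU^{-1}$ is multiplication by a bounded, a.e.\ nonvanishing function $r$ (nonvanishing because $R$ is injective). Then $H=\I+R^{-1}$ is conjugated by $U$ to multiplication by $f:=\I+1/r$; this $f$ is a.e.\ finite (since $r\neq0$ a.e.) and real-valued a.e.\ (since $UHU^{-1}$ is self-adjoint, being unitarily equivalent to $H$). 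Assertions (1) and (2) for $H$ then follow from the corresponding elementary facts about the multiplication operator $M_{r}$: a vector $\psi$ belongs to $\dom(H)$ precisely when $U\psi$ lies in the range of $M_{r}$, i.e.\ precisely when $f\cdot(U\psi)\in L^{2}(M,\mu)$, and on this domain $UHU^{-1}$ acts by pointwise multiplication by $f$.

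For a bounded self-adjoint operator $B$ (the same argument works for a finite commuting family, which is what the reduction above requires) the construction is classical. I would first set up the continuous functional calculus on $C(\sigma(B))$ and, for each $\psi\in\cH$, apply the Riesz representation theorem to the positive linear functional $p\mapsto\langle\psi,p(B)\psi\rangle$ to obtain a finite Borel measure $\mu_{\psi}$ on $\sigma(B)$ with $\langle\psi,p(B)\psi\rangle=\int p\,d\mu_{\psi}$. Using separability of $\cH$, I would decompose $\cH=\bigoplus_{k\geq1}\cH_{k}$ into a countable orthogonal sum of cyclic subspaces, $\cH_{k}=\overline{\{p(B)\psi_{k}:p\in C(\sigma(B))\}}$ for suitable generating vectors $\psi_{k}$; on each $\cH_{k}$ the correspondence $p(B)\psi_{k}\mapsto p$ extends to a unitary $V_{k}:\cH_{k}\to L^{2}(\sigma(B),d\mu_{\psi_{k}})$ intertwining $B|_{\cH_{k}}$ with multiplication by the coordinate. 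Finally I would amalgamate these by taking $M$ to be the disjoint union of countably many copies of $\sigma(B)$, the $k$-th copy carrying the rescaled measure $c_{k}\mu_{\psi_{k}}$ with $c_{k}:=2^{-k}(1+\mu_{\psi_{k}}(\sigma(B)))^{-1}$ so that $\mu(M)\leq1$; the direct sum of the $V_{k}$, suitably rescaled on each block, gives the desired unitary conjugating $B$ to multiplication by the function that equals the coordinate on each copy of $\sigma(B)$.

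The main obstacle is not conceptual but one of bookkeeping: amalgamating countably many cyclic measure spaces into a single \emph{finite} measure space without disturbing the multiplication structure, and --- in the reduction step --- tracking domains carefully so that the representing function $f$ is genuinely real-valued a.e.\ and the equivalence in (1) holds exactly as stated. The remaining ingredients (the Riesz representation theorem, the cyclic-subspace decomposition, and the passage from $R$ back to $H$) are routine.
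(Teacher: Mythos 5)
The paper offers no proof of this statement: it is quoted verbatim from \cite[Theorem VIII.4]{RS-I} as a black box, so there is nothing internal to compare against. Your sketch is correct and is essentially the argument of the cited reference---reduction to the bounded case via the resolvent $(H-\I)^{-1}$ (whose normality makes its real and imaginary parts a commuting bounded self-adjoint pair), cyclic-subspace decomposition and Riesz representation for that pair, and amalgamation of the cyclic pieces into a single finite measure space---so there is no gap to report.
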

The above theorem essentially says that $H$ is isometric to the multiplication operator by $f$ (still denoted by the same symbol) on the space $L^{2}(M,d\mu)$, whose domain is given by
\[
\dom(f):=\left\{\varphi\in L^{2}(M,d\mu) : f(\cdot)\varphi(\cdot)\in L^{2}(M,d\mu)\right\}.
\]
endowed with the norm
\[
\Vert\varphi\Vert^{2}_{1}:=\int_{M}(1+f(m)^{2})\varphi(m)^{2}d\mu(m)
\]
The form domain of $f$ has an obvious explicit definition, as $f$ is a multiplication operator, that is
\[
 \left\{\varphi\in L^{2}(M,d\mu) : \sqrt{|f(\cdot)|}\,\varphi(\cdot)\in L^{2}(M,d\mu)\right\}
\]

Anyway, it can be recovered using real interpolation theory (we follow the presentation given in \cite{AF,A-p}). Consider the Hilbert spaces $\cH_{0}:=L^{2}(M,d\mu)$ with the norm $\Vert x\Vert_{0}:=\Vert x\Vert_{L^{2}(d\mu)}$, and $\cH_{1}:=\dom(f)$, so that $\cH_{1}\subset\cH_{0}$. Define, in addition, the following quadratic version of Peetre's \emph{K-functional}
\[
K(t,x):=\inf\left\{\Vert x_{0}\Vert^{2}_{0}+t\Vert x_{1}\Vert^{2}_{1} : x=x_{0}+x_{1},x_{0}\in \cH_{0},x_{1}\in \cH_{1}\right\}.
\]
The squared norm $\Vert x\Vert^{2}_{1}$ is a densely defined quadratic form on $\cH_{0}$, represented by
\[
\Vert x\Vert^{2}_{1}=\langle (1+f^{2}(\cdot))x,x\rangle_{0},
\]
where $\langle\cdot,\cdot\rangle_{0}$ is the scalar product of $\cH_{0}$.

By standard arguments (see e.g. \cite{A-p} or \cite[Ch. 7]{AF} and references therein) the intermediate spaces $\cH_{\theta} \subset\left[\cH_{0},\cH_{1}\right]_{\theta}\subset\cH_{0}$, $0<\theta<1$, are given by the elements $x\in\cH_{0}$ such that the following quantity is finite:
\[
\int^{\infty}_{0}\left(t^{-\theta} K(t,x)\right)\frac{dt}{t}<\infty.
\]
Then, for the space $\cH_{\theta}:=\left[\cH_{0},\cH_{1}\right]_{\theta}$ there holds
\[
 \Vert x\Vert^{2}_{\theta}=\langle(1+f^{2}(\cdot))^{\theta}x,x \rangle_{0}.
\]
As a consequence, for $\theta=\frac{1}{2}$ one recovers the form domain of the operator $f$ and, hence, setting $H=\cD$ and $\cH=L^{2}(\cG,\C^{2})$, we can conclude that the space defined in \eqref{interpolateddomain} is exactly the form domain of $\cD$, with $Y=U^{-1}\cH_{\frac{1}{2}}$.

\end{document}